\newtheorem{theorem}{Theorem}[section]
\newtheorem{corollary}[theorem]{Corollary}
\newtheorem{proposition}[theorem]{Proposition}
\theoremstyle{definition}
\newtheorem{definition}[theorem]{Definition}
\newtheorem{remark}[theorem]{Remark}
\newtheorem{example}[theorem]{Example}
\numberwithin{equation}{section}
\begin{document}


\baselineskip=17pt


\title[Curlicues generated by circle homeomorphisms]{Curlicues generated by circle homeomorphisms}

\author[J. Signerska-Rynkowska]{Justyna Signerska-Rynkowska}
\address{Faculty of Applied Physics and Mathematics \\ Gda\'nsk University of Technology \\
Narutowicza 11/12\\
80-299 Gda{\'n}sk, Poland}
\email{justyna.signerska@pg.edu.pl}

\date{}

\begin{abstract}
We investigate the curves in the complex plane which are generated by sequences of real numbers being the lifts of the points on the orbit of an orientation preserving circle homeomorphism. Geometrical properties of these curves such as boundedness, superficiality, local discrete radius of curvature are linked with dynamical properties of the circle homeomorphism which generates them: rotation number and its continued fraction expansion, existence of a continuous solution of the corresponding cohomological equation and displacement sequence along the orbit. 
\end{abstract}

\subjclass[2020]{Primary 37E10; Secondary 37E45}

\keywords{circle homeomorphism, curlicue, rotation number, cohomological equation, superficial curve}

\maketitle

\section{Introduction}
\label{intro}
The term \emph{curlicue} is probably mostly used in various visual arts, for example it can be a recurring decorative motif in architecture, calligraphy or fashion design. In this article we look at mathematical curlicues: 
\begin{definition}\label{defin0}
A curlicue $\Gamma=\Gamma(u)$, where $u=(u_n)_{n=0}^{\infty}\subset \mathbb{R}$, is a piece-wise linear curve in $\mathbb{C}$ passing consecutively through the points $z_0=0 \in \mathbb{C}$, and $z_1$, $z_2$, ..., where
\begin{equation}\label{glowne1}
  z_n = \sum_{k=0}^{n-1}\exp(2\pi\imath u_k), \quad n=1,2,...
\end{equation}
In other words,
\begin{equation}\label{glowne2}
  z_n = z_{n-1}+\exp(2\pi\imath u_{n-1}), \quad n=1,2,...
\end{equation}
\end{definition}

A curlicue can be obtained from an arbitrary sequence $(u_n)_{n=0}^{\infty}$ of real numbers. However, in this paper we assume that $u_n:=\Phi^n(x_0)$, $n=0,1,...$, $x_0\in\mathbb{R}$, with $\Phi:\mathbb{R}\to\mathbb{R}$ being a lift of an orientation preserving circle homeomorphism $\varphi:S^1\to S^1$, where $\mathbb{R}$ covers $S^1$ via the standard projection: $\mathfrak{p}:\mathbb{R}\to S^1$, $\mathfrak{p}(x)=\exp(2\pi\imath x)$. Construction of such a curlicue is illustrated in Figure \ref{rys1}. Sometimes we will also denote $\Gamma$ as $\Gamma((u_n))$ and when the generating homeomorphism $\Phi$ is clear from the context, we will write $\Gamma(x_0)$ to distinguish between the curves generated by the same homeomorphism but along the orbits of different initial points $x_0$.  
\begin{figure}
  \centering
    \includegraphics[width=0.5\textwidth]{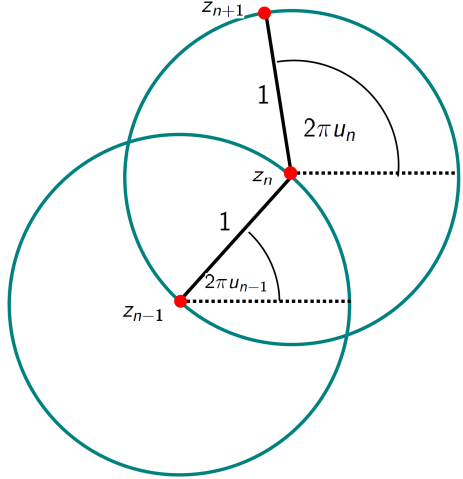}
     \caption{Construction of a curlicue}\label{rys1}
\end{figure}

The name \emph{curlicue} for such a curve is not accidentally connected  with the artistic notion of a curlicue: indeed, these curves, obtained for various sequences $(u_n)_{n=0}^{\infty}\subset\mathbb{R}$, can form beautiful shapes as one can see, for example, in the papers of Dekking and Mend\`{e}s-France (\cite{curl2}), who studied geometrical properties of such curves (superficiality and dimension), Berry and Golberg (\cite{berry}), Sinai (\cite{curl}) or Cellarosi (\cite{cellarosi}) who studied and developed techniques of renormalisation and limiting distributions of classical curlicues, i.e. for $u_n=\alpha n^2$. Many fantastic pictures of curlicues can be found also in the work of Moore and van der Poorten (\cite{moore}), who gave nice description of the work \cite{berry}. However, we would like to draw attention to dynamically generated curlicues, i.e. the curves $\Gamma$, where $(u_n)$ is obtained from an orbit of a given map $f$ since reflecting the dynamics of $f$ in the structure of $\Gamma$ might be in general an intriguing question. 

We also remark that in the existing literature the term curlicues (if used at all) often refers to spiral-like components of the curve $\Gamma$ (which usually has both straight-like and spiral-like parts). However, in the current paper by a  ``curlicue'' we mean the whole curve $\Gamma$, defined as above. Perhaps it is also worth mentioning that the  curlicue  can be interpreted as a trajectory of a particle in the plane which starts in the origin at time $t=0$ and moves with a constant velocity, changing its direction at instances $t=0,1,2,3,...$, where the new direction is given by a number $2\pi u_t\in [0, 2\pi)$ (as mentioned e.g. in \cite{curl2}). Thus $\Gamma$ can be seen as a trajectory of a walk obtained through some dynamical system (compare, for example, with \cite{avila2}).

In this study we are mainly interested not in ``ergodic'' but rather in geometric properties of curlicues such as boundedness and superficiality (defined below). Although dynamics of circle homeomorphisms is now well understood  (see e.g. \cite{katok}), it turns out that it is not so trivial to give complete description of curlicues determined by them. In section \ref{secRati} we prove that geometric properties of such curves are inevitably connected with rationality of the rotation number $\varrho$ of the circle homeomorphism $\varphi$.  However, unless $\varphi$ is a rigid rotation, this relation is not so straightforward. In particular, there are no simple criteria for deciding whether $\Gamma$ is bounded or not (even for equidistributed sequences $(u_n)_{n=0}^{\infty}$, see \cite{curl2}). In section \ref{secConn} it is deduced that for $\varrho\in \mathbb{R}\setminus\mathbb{Q}$ boundedness  and shape of $\Gamma$ depend on the solution of the corresponding cohomological equation. Further, in section \ref{secGrow} we estimate  growth rate and superficiality of an unbounded curve $\Gamma$ with $\varrho\in \mathbb{R}\setminus\mathbb{Q}$ satisfying some further (generic) properties. The last sections are devoted to a local discrete radius of curvature and a brief discussion of our results and possible extensions.    

\section{Rational vs. irrational rotation number}\label{secRati}

In \cite[Example 4.1]{curl2} the following result was stated for $\varphi$ being the rotation by $\varrho$:
\begin{proposition}\label{rotation}
Let $u_n:=n\varrho$. Then
\begin{equation}\label{modul}
\vert z_n\vert=\left \vert \frac{\sin n\pi\varrho}{\sin \pi \varrho}\right\vert
\end{equation}
and the points $z_n$ lie on a circle with radius
\begin{equation}\label{radius}
R=\frac{1}{2\vert\sin{\pi \varrho\vert}}
\end{equation}
and center
\begin{equation}\label{center}
C=(\frac{1}{2},\frac{1}{2}\cot {\pi\varrho}).
\end{equation}
Furthermore,

\begin{enumerate}[label=\upshape(\roman*), leftmargin=*, widest=iii]
  \item if $\varrho\in \mathbb{Q}$ (and $\varrho\neq 0 \mod 1$), then $\Gamma(u)$ is a regular polygon (convex or star) with $q$ sides, where $\varrho=p/q$ ($p$ and $q$ relatively prime);
  \item if $\varrho\in\mathbb{R}\setminus\mathbb{Q}$, then $\Gamma(u)$ is dense in an annulus with radii
\begin{equation}\label{radii}
r_1=\frac{1}{2}\vert\cot \pi\varrho\vert  \qquad \textrm{and} \qquad r_2=\frac{1}{2\vert\sin{\pi \varrho\vert}}
\end{equation}
and
\begin{equation}\label{dim}
\mathrm{dim}\Gamma=2.
\end{equation}
\end{enumerate}
\end{proposition}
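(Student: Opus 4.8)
The plan is to treat all the formulas uniformly by summing a geometric series. Writing $w := \exp(2\pi\imath\varrho)$, Definition \ref{defin0} gives $z_n = \sum_{k=0}^{n-1} w^k = (w^n-1)/(w-1)$ for $w\neq 1$ (i.e. $\varrho\neq 0 \bmod 1$). Applying the elementary identity $|\exp(\imath\theta)-1| = 2|\sin(\theta/2)|$ to numerator and denominator immediately yields $|z_n| = |\sin(n\pi\varrho)|/|\sin\pi\varrho|$, which is \eqref{modul}. For the circle I would rewrite the closed form as
\[ z_n + \frac{1}{w-1} = \frac{w^n}{w-1}, \]
whose right-hand side has constant modulus $|w-1|^{-1} = 1/(2|\sin\pi\varrho|)$. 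Hence every $z_n$ lies on the circle centered at $C := -1/(w-1)$ of radius $R = 1/(2|\sin\pi\varrho|)$, which is \eqref{radius}; a short simplification of $-1/(w-1)$ using $w-1 = 2\imath\sin(\pi\varrho)\exp(\imath\pi\varrho)$ produces $C = \tfrac12 + \tfrac{\imath}{2}\cot\pi\varrho$, i.e. \eqref{center}.

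For part (i), when $\varrho = p/q$ with $\gcd(p,q)=1$ the number $w$ is a primitive $q$-th root of unity, so $\sum_{k=0}^{q-1} w^k = 0$ and therefore $z_{n+q}=z_n$: the curve visits only the $q$ distinct points $z_0,\dots,z_{q-1}$. Since $z_n = w^n/(w-1) - 1/(w-1)$ is the image of $w^n$ under a fixed similarity (rotation, scaling and translation), these $q$ points are equally spaced on the circle of radius $R$ about $C$, and consecutive indices differ by the fixed central angle $2\pi p/q$. Connecting them in order therefore traces the regular $q$-gon, convex when $p\equiv\pm 1 \pmod q$ and a regular star polygon otherwise.

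For part (ii), with $\varrho\in\mathbb{R}\setminus\mathbb{Q}$ the sequence $(n\varrho)$ is equidistributed mod $1$ by Weyl's theorem, so the vertices $w^n$, and hence the $z_n$, are dense on the outer circle $|z-C| = R = r_2$. The decisive geometric observation is that each edge $[z_n,z_{n+1}]$ is a chord of this circle subtending the fixed central angle $2\pi\varrho$ at $C$, so its distance from $C$ equals $R|\cos\pi\varrho| = \tfrac12|\cot\pi\varrho| = r_1$; thus every edge is tangent to the inner circle of radius $r_1$. Parametrizing a chord by the angular position $\beta_n$ of its point of tangency, the $\beta_n$ form an orbit of the rotation by $2\pi\varrho$ and so are dense in the circle, while the union of all chords tangent to the inner circle (over all tangency angles) is exactly the closed annulus $r_1\le |z-C|\le r_2$. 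Continuity of the chord in $\beta$ then upgrades density of $\{\beta_n\}$ to density of $\Gamma=\bigcup_n [z_n,z_{n+1}]$ in the annulus; I expect this passage from ``dense vertices'' to a ``dense sweep of the annulus'' to be the main point requiring care. Finally, since the (upper box-counting) dimension is unchanged under taking closures and $\overline{\Gamma}$ is the full two-dimensional annulus (note $r_1<r_2$ because $|\cos\pi\varrho|<1$), we conclude $\dim\Gamma = 2$, which is \eqref{dim}.
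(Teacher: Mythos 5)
Your proposal is correct, and there is nothing in the paper to diverge from: Proposition \ref{rotation} is stated without proof, quoted from \cite[Example 4.1]{curl2}. Your geometric-series derivation $z_n=(w^n-1)/(w-1)$ with $w=\exp(2\pi\imath\varrho)$, the similarity argument for the regular (star) $q$-gon in (i), and the tangent-chord argument sweeping the annulus in (ii) — including the correct care taken in passing from dense vertices and tangency angles to density of the full curve, and the closure-invariance of the dimension — constitute the standard argument, consistent with the cited source and with the paper's surrounding remarks (e.g.\ the Schl\"afli symbol $\{q,p\}$ and convexity exactly for $p\in\{1,q-1\}$).
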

For the precise definition of the dimension $\mathrm{dim}\Gamma$ see e.g. \cite{curl2}. By the regular star polygon we mean self-intersecting, equilateral equiangular polygon, which can be constructed by connecting every $p$-th point out of $q$ points regularly spaced on the circle. For example, regular star polygon in Figure \ref{fig:irrat_rot} (left) is obtained by joining every third vertex of a regular decagon until the starting vertex is reached. Regular polygons can be described by their Schl{\"a}fli symbols $\{q,p\}$ where $p\geq 2$ and $q$ are relatively prime integers:
\begin{remark} If $\Gamma$ is a curve generated by rotation $\mathcal{R}_{\varrho}$ with $\varrho=\frac{p}{q}$, then it is a regular polygon with Schl{\"a}fli symbol $\{q,p\}$.
\end{remark}
It is easy to notice that rotation numbers of the form $1/q$ and $(q-1)/q$ correspond to  $q$-sided regular convex polygons.  

Proposition \ref{rotation} deals  with the simplest situation when the curve is generated by a circle rotation $\mathcal{R}_{\varrho}$. Clearly, the properties of $\Gamma$ are determined by rationality of $\varrho$. This simple observation is a starting point for our investigations: we ask what changes if one considers slightly more general case, i.e. when $\Gamma$ is generated by an orientation preserving circle homeomorphism (we remark that all homeomorphims of $S^1$ considered here are assumed  to be orientation preserving, even if not stated directly).  

\begin{figure}[h!]
 
                \includegraphics[width=0.325\textwidth]{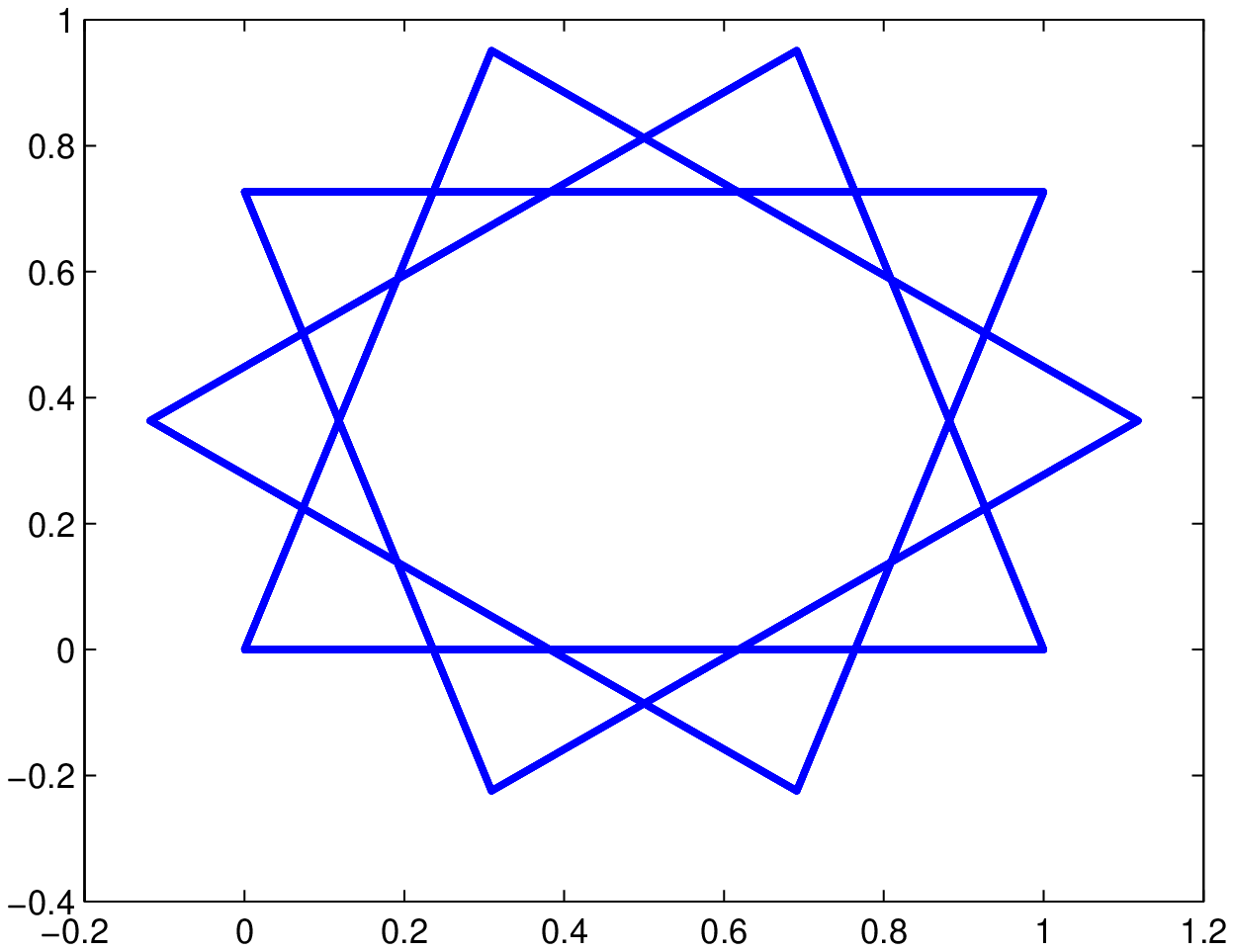}
               \includegraphics[width=0.325\textwidth]{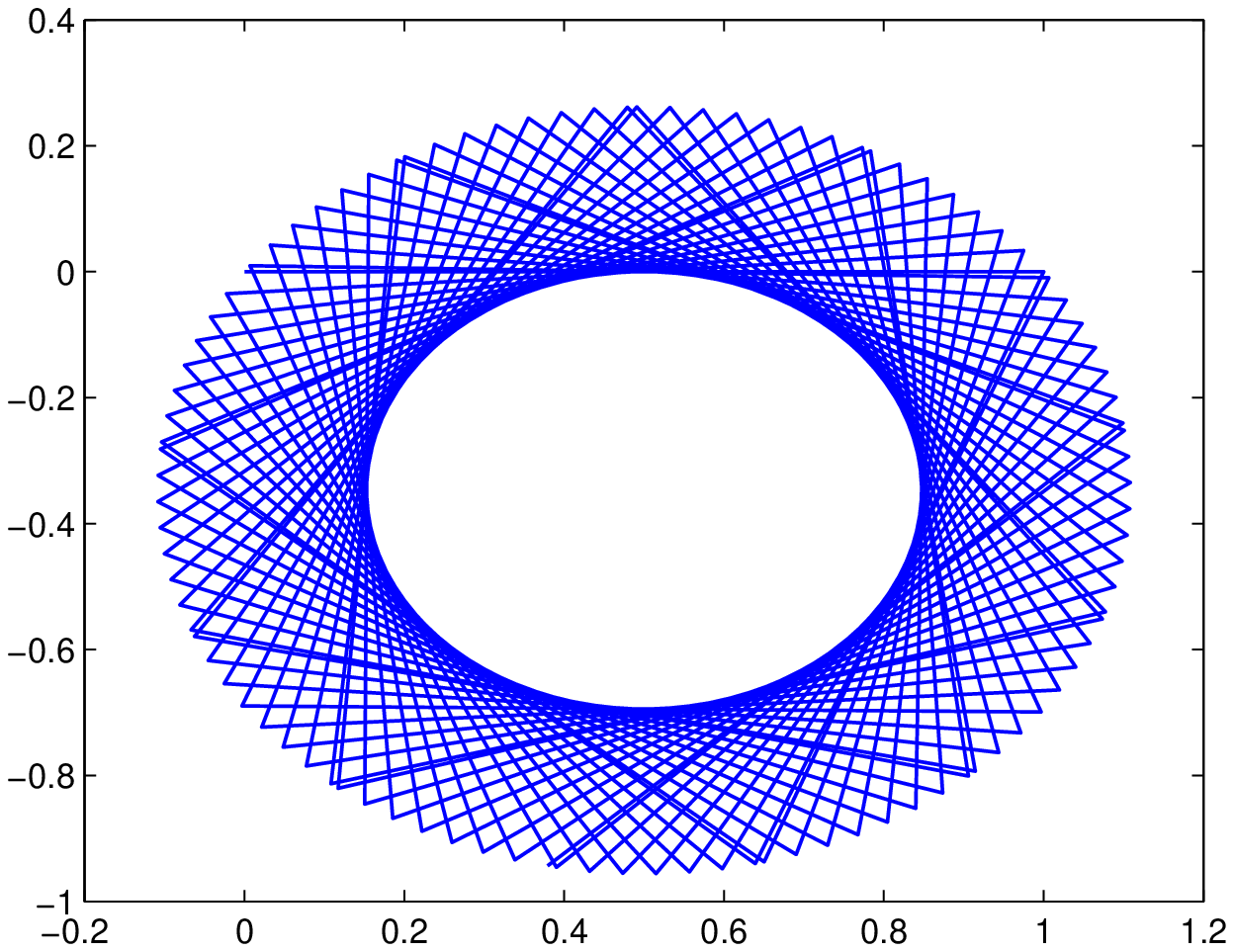}
                \includegraphics[width=0.325\textwidth]{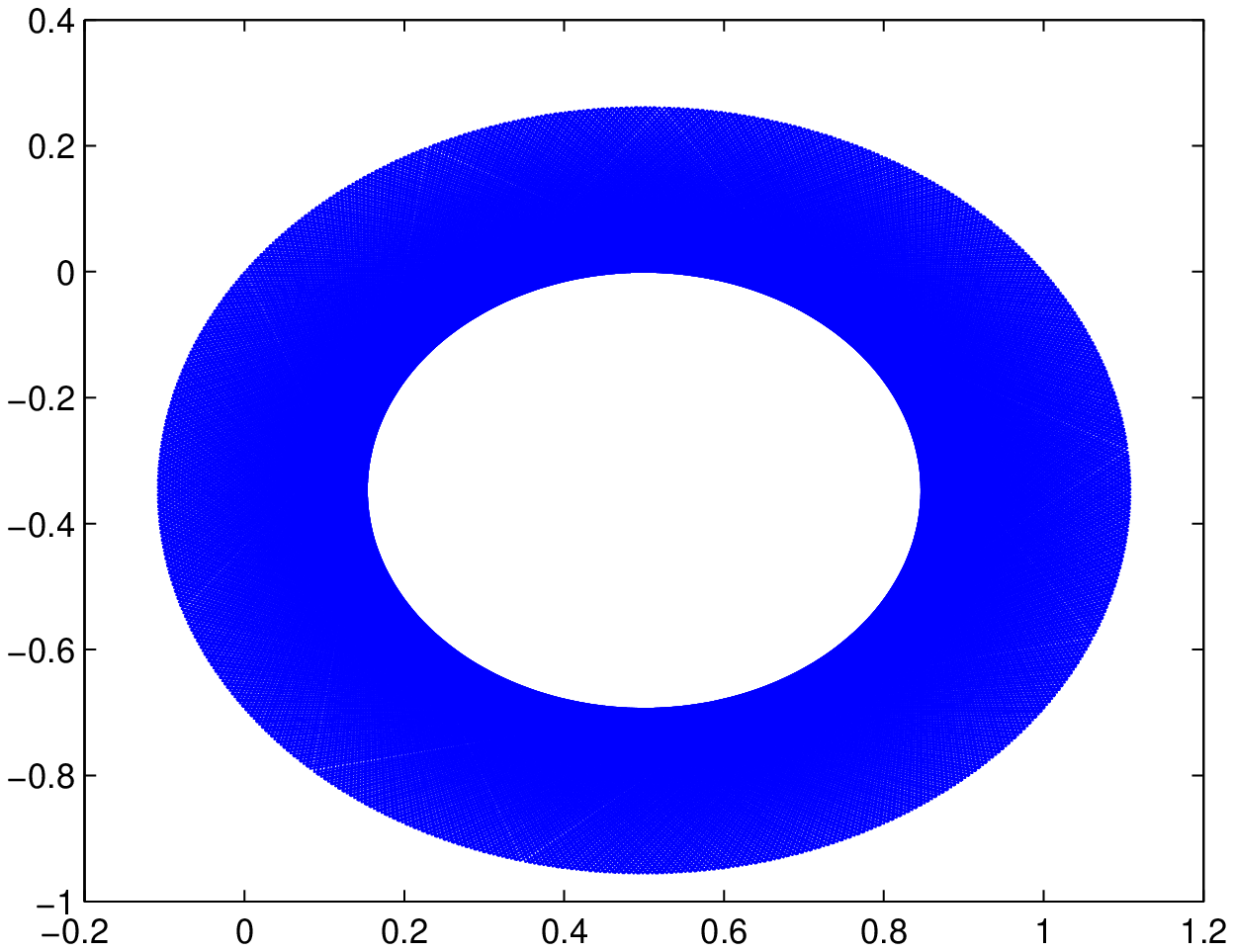}
              
       \caption{$\Gamma_{k}((n\varrho))$, a curlicue generated by the rotation $\mathcal{R}_{\varrho}$, for different values of $k$ (number of iterates) and $\varrho$ (left: $k=1000$, $\varrho=3/10$, centre: $k=100$, $\varrho=\ln 2$, right: $k=1000$, $\varrho= \ln 2$).}\label{fig:irrat_rot}
\end{figure}

Before we proceed, a few essential definitions and existing results must be recalled. 
\begin{definition}
A bounded sequence $\{u_0,u_1,u_2, ...\}$ of real numbers is \emph{equidistributed} in the interval $[a,b]$ if for any subinterval $[c,d]\subset [a,b]$ we have
\begin{displaymath}
\lim_{n\to\infty}\frac{\vert \{u_0,u_1,u_2, ...u_{n-1}\}\cap [c,d]\vert}{n}=\frac{d-c}{b -a},
\end{displaymath}
where $\vert \{u_0,u_1,u_2, ...u_{n-1}\}\cap [c,d]\vert$ denotes the number of elements of the
sequence, out of the first $n$-elements, in the interval $[c,d]$.
\end{definition}

\begin{definition}\label{equi}
The sequence $\{u_0,u_1,u_2, ...\}$ is said to be \emph{equidistributed modulo 1} (alternatively, \emph{uniformly distributed modulo 1}) if the sequence of fractional parts of its elements, i.e. the sequence $\{u_0-\left \lfloor{u_0}\right \rfloor,u_1-\left \lfloor{u_1}\right \rfloor, u_2-\left \lfloor{u_2}\right \rfloor,...\}$, is equidistributed in the interval $[0,1]$.
\end{definition}

Let us also remind that an arbitrary curve $\Gamma$ is \emph{rectifiable} if its length is finite and is said to be \emph{locally rectifiable} if all its closed subcurves are rectifiable (see e.g. \cite{ksiazkadoanalizy}). For a locally rectifiable curve  $\Gamma:=\gamma([0,\infty))$ ($\gamma:[0,\infty)\to \mathbb{R}^2$ a continuous function), we denote by $\Gamma_t$ the beginning part of $\Gamma$ which has length $t$. $\Gamma$ is called bounded if $\mathrm{Diam} \Gamma<\infty$ (otherwise, $\Gamma$ is called unbounded). For $\varepsilon>0$ we define the tabular neighborhood

\begin{displaymath}
\Gamma^{\varepsilon}:=\{y: \ \exists_{x\in\Gamma} \ d(x,y)<\varepsilon\}
\end{displaymath}

\begin{definition}\label{defsuper}
An unbounded curve  $\Gamma$ is \emph{superficial} if
\begin{displaymath}
\lim_{t\to\infty}\frac{t}{\mathrm{Diam}\Gamma_t}=\infty.
\end{displaymath}
In turn, a bounded curve $\Gamma$ is \emph{superficial} if
\begin{displaymath}
\lim_{\varepsilon\to 0}\frac{\mathrm{Area} \Gamma^{\varepsilon}}{\varepsilon}=\infty,
\end{displaymath}
where by $\mathrm{Area}$ we mean a 2-dimensional Lebesgue measure.
\end{definition}

The authors of \cite{curl2} prove a very useful criterion for a sequence to be equidistributed modulo 1.
\begin{theorem}[\cite{curl2}]\label{glowne}
Let $\Gamma=\Gamma(u)$ be a curve generated by the sequence $u=(u_n)_{n=0}^{\infty}$.

The sequence $\{u_n\}$ is equidistributed modulo $1$ if and only if for each positive integer $q$ the curve $\Gamma(qu)$ is superficial.
\end{theorem}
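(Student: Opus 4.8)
My plan is to reduce the statement to Weyl's equidistribution criterion and then translate the analytic condition it provides into the two geometric notions of superficiality. The bridge is the elementary observation that the $N$-th vertex of the curve $\Gamma(qu)$ is exactly a Weyl sum: writing $z_N^{(q)}=\sum_{n=0}^{N-1}\exp(2\pi\imath q u_n)$, Weyl's criterion says that $\{u_n\}$ is equidistributed modulo $1$ if and only if $|z_N^{(q)}|/N\to 0$ as $N\to\infty$ for every positive integer $q$ (the negative frequencies give complex conjugates, hence the same moduli, so positive $q$ suffice). Thus it is enough to prove, for each fixed $q$, that the condition $|z_N^{(q)}|/N\to 0$ matches superficiality of $\Gamma(qu)$, being careful that the definition of superficiality splits according to whether $\Gamma(qu)$ is bounded.

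First I would treat the unbounded case. Since every segment of a curlicue has unit length, the arc length of $\Gamma(qu)$ up to the vertex $z_n^{(q)}$ equals $n$, so $\Gamma_t$ with $t=n$ terminates at $z_n^{(q)}$ and every point of $\Gamma_t$ lies within distance $1$ of some vertex $z_m^{(q)}$ with $m\le n$. Because $z_0=0$ lies on the curve, this yields the two-sided comparison $\max_{m\le n}|z_m^{(q)}|\le \mathrm{Diam}\,\Gamma_n\le 2\max_{m\le n}|z_m^{(q)}|+2$, and interpolating over non-integer $t$ shows that $\mathrm{Diam}\,\Gamma_t/t\to 0$ if and only if $\max_{m\le n}|z_m^{(q)}|/n\to 0$; a short $\varepsilon$-argument then shows the latter is equivalent to $|z_n^{(q)}|/n\to 0$. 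Hence, for an unbounded $\Gamma(qu)$, superficiality (that is, $t/\mathrm{Diam}\,\Gamma_t\to\infty$) is equivalent to $|z_N^{(q)}|/N\to 0$.

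These observations already settle the implication that superficiality for all $q$ forces equidistribution: for each $q$, if $\Gamma(qu)$ is unbounded superficiality yields $|z_N^{(q)}|/N\to 0$, while if $\Gamma(qu)$ is bounded then $|z_N^{(q)}|$ is bounded and the ratio tends to $0$ automatically, so Weyl's criterion gives equidistribution. The converse implication is also immediate in the unbounded case, since equidistribution of $\{u_n\}$ forces $\{q u_n\}$ to be equidistributed (apply Weyl at frequencies $hq$) and hence $|z_N^{(q)}|/N\to 0$, which we just identified with superficiality. The one remaining case, which I expect to be the main obstacle, is to show that if $\{u_n\}$ is equidistributed and $\Gamma(qu)$ happens to be bounded, then $\Gamma(qu)$ is superficial in the area sense $\mathrm{Area}\,\Gamma^{\varepsilon}/\varepsilon\to\infty$. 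Here the vertex-growth condition is vacuously true and carries no information, so one must exploit equidistribution of the step directions $\exp(2\pi\imath q u_n)$ directly.

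For this bounded case I would aim to prove the stronger statement that $\mathrm{Area}(\overline{\Gamma(qu)})>0$; since $\overline{\Gamma}\subseteq\Gamma^{\varepsilon}$ for every $\varepsilon>0$, this gives $\mathrm{Area}\,\Gamma^{\varepsilon}/\varepsilon\ge \mathrm{Area}(\overline{\Gamma})/\varepsilon\to\infty$. Covering the bounded region containing $\Gamma$ by a grid of squares of side $\varepsilon$ and writing $N_{\varepsilon}$ for the number of squares the curve enters, one has $\mathrm{Area}\,\Gamma^{\varepsilon}\gtrsim \varepsilon^2 N_{\varepsilon}$, so the task reduces to the lower bound $N_{\varepsilon}\gg \varepsilon^{-1}$. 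The content of this estimate is that equidistribution of the directions prevents the walk from collapsing onto a set of finite one-dimensional content: the degenerate sequence with $u_n\in\{0,\tfrac12\}$, whose curve is the segment $[0,1]$ traversed back and forth, is bounded and non-superficial precisely because its directions take only two values, so equidistribution is exactly what must be used. Making ``equidistributed directions force the visited-box count to exceed $\varepsilon^{-1}$'' quantitative—most naturally through an occupation-measure or empirical-measure argument showing the limiting distribution of the vertices is not carried by a rectifiable set—is the crux of the proof, and is where I would concentrate the real work.
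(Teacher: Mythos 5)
Your reduction to Weyl's criterion, the identification of the vertices of $\Gamma(qu)$ with the Weyl sums $z_N^{(q)}$, the two-sided comparison $\max_{m\le n}|z_m^{(q)}|\le \mathrm{Diam}\,\Gamma_n\le 2\max_{m\le n}|z_m^{(q)}|+2$ in the unbounded case, and the direction ``superficial for all $q$ $\Rightarrow$ equidistributed'' (where boundedness alone trivializes the Weyl sums) are all correct, and this is essentially the route of \cite{curl2} (the paper under review quotes the theorem without proof). But the one case you defer --- $(u_n)$ equidistributed, $\Gamma(qu)$ bounded, show $\mathrm{Area}\,\Gamma^{\varepsilon}/\varepsilon\to\infty$ --- is exactly where the real content lies, and you do not prove it; you only name it as ``the crux''. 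So the proposal has a genuine gap rather than a complete alternative argument.

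Moreover, the strategy you sketch for closing that gap points in the wrong direction on two counts. First, proving $\mathrm{Area}(\overline{\Gamma(qu)})>0$ is strictly stronger than what is needed: area-superficiality already follows whenever the bounded curve contains infinitely many unit segments with pairwise distinct directions, since for any $N$ such segments the $\varepsilon$-tubes around two unit segments meeting at angle $\delta$ overlap in area $O(\varepsilon^2/\sin\delta)$, whence $\mathrm{Area}\,\Gamma^{\varepsilon}\ge 2N\varepsilon-C_N\varepsilon^2$ and $\liminf_{\varepsilon\to 0}\mathrm{Area}\,\Gamma^{\varepsilon}/\varepsilon\ge 2N$ for every $N$ --- note that such a union of segments can perfectly well have closure of zero area (take distinct directions $\theta_k\to\theta_0$ with the segments accumulating on a single segment), so positive-area closure is not the mechanism. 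This elementary counting is precisely Proposition \ref{boundedsuperficial}, which the paper extracts from the proof of Theorem 3.1 in \cite{curl2}; combined with the observation that equidistribution of $(qu_n)$ modulo $1$ forces infinitely many $qu_n$ to be distinct modulo $1$, it finishes your bounded case in a few lines. Second, your concrete plan --- show that the limiting distribution of the \emph{vertices} is not carried by a rectifiable set --- fails already for the model example: for $u_n=n\varrho$ with $\varrho$ irrational, all vertices $z_n$ lie on a fixed circle (Proposition \ref{rotation}), so their occupation measure \emph{is} carried by a rectifiable set, yet the curve is superficial. Superficiality of a bounded curlicue is driven by the spread of the step directions $\exp(2\pi\imath qu_n)$, not by the vertices filling two-dimensional area, and any argument routed through the vertex distribution alone cannot succeed.
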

By $\Gamma(qu)$ we denote a curve generated by the sequence $(q u_n)_{n=0}^{\infty}$, i.e. a curve passing through the points $z_0=0\in\mathbb{C}$ and
\begin{displaymath}
z_n(q):=\sum_{k=0}^{n-1}\exp(2\pi\imath q u_k), \quad n\in\mathbb{N}.
\end{displaymath}

From the proof of Theorem 3.1 in \cite{curl2} one concludes
\begin{proposition}\label{boundedsuperficial}
If the sequence $(u_n)$ determines a bounded curve $\Gamma(u)$ and if infinitely many $u_n$ are different modulo 1, then the curve $\Gamma(u)$ is superficial.
\end{proposition}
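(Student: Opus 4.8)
The plan is to verify the bounded-curve criterion of Definition~\ref{defsuper} directly, that is, to show $\lim_{\varepsilon\to 0}\mathrm{Area}(\Gamma^{\varepsilon})/\varepsilon=\infty$. The starting remark is that every edge of $\Gamma(u)$ is a \emph{unit} segment, since each summand $\exp(2\pi\imath u_{n-1})$ in \eqref{glowne1} has modulus $1$; thus $\Gamma$ is a union of countably many unit segments $s_1,s_2,\dots$, where $s_k$ joins $z_{k-1}$ to $z_k$ and has direction $2\pi u_{k-1}$. The hypothesis that infinitely many $u_n$ are distinct modulo $1$ means that the set $\{u_n \bmod 1\}$ is infinite; since passage from a direction to its underlying \emph{line} direction (reduction modulo $\tfrac12$) is at most two-to-one, the edges $s_k$ realise infinitely many pairwise distinct line directions. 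Boundedness of $\Gamma$ enters only to legitimise the area form of the definition of superficiality; the area lower bound itself does not use it.

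The core is an area estimate for the tubular neighbourhood coming from finitely many well-chosen edges. Fix $m$ and choose edges $s_{k_1},\dots,s_{k_m}$ whose line directions are pairwise distinct; being finitely many distinct numbers, these directions are separated by some minimal angle $\alpha_m>0$, so the $m$ segments are pairwise non-parallel and meet in at most one point. The $\varepsilon$-neighbourhood of a single unit segment is a stadium of area $2\varepsilon+\pi\varepsilon^2\ge 2\varepsilon$, while the intersection of the $\varepsilon$-neighbourhoods of two of these segments is contained in the intersection of the two width-$2\varepsilon$ strips around their supporting lines, a rhombus of area $4\varepsilon^2/\sin\alpha_m$. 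Hence, by inclusion--exclusion,
\begin{equation*}
\mathrm{Area}(\Gamma^{\varepsilon})\ \ge\ \mathrm{Area}\Big(\bigcup_{i=1}^{m}s_{k_i}^{\varepsilon}\Big)\ \ge\ 2m\,\varepsilon-\binom{m}{2}\frac{4\varepsilon^{2}}{\sin\alpha_m}.
\end{equation*}
Dividing by $\varepsilon$ and letting $\varepsilon\to0$ yields $\liminf_{\varepsilon\to0}\mathrm{Area}(\Gamma^{\varepsilon})/\varepsilon\ge 2m$, the correction term being $O_m(\varepsilon)$. As $m$ is arbitrary this forces $\lim_{\varepsilon\to0}\mathrm{Area}(\Gamma^{\varepsilon})/\varepsilon=\infty$, which is exactly superficiality for the bounded curve $\Gamma$.

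The step I expect to require the most care is the overlap control. One must select edges with genuinely distinct \emph{line} directions, not merely distinct directions modulo $1$, since two values differing by $\tfrac12$ produce collinear, oppositely oriented edges whose tubes could overlap along their whole length and destroy the linear-in-$\varepsilon$ main term; here the reduction to infinitely many line directions is what makes the selection possible. One then checks that, for a \emph{fixed} finite family, all pairwise tube overlaps are $O_m(\varepsilon^2)$ uniformly in the edges' positions, so that the term $2m\varepsilon$ survives division by $\varepsilon$. It is worth noting that this is precisely where the hypothesis is needed: if only finitely many line directions occurred---for instance a curve oscillating back and forth along a single line---the bound would saturate at a finite value and $\Gamma$ need not be superficial, consistent with the statement.
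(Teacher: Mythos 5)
Your proof is correct, and it is self-contained where the paper is not: the paper offers no argument for Proposition \ref{boundedsuperficial}, stating only that it is concluded ``from the proof of Theorem 3.1 in \cite{curl2}'' (the Dekking--Mend\`es France equidistribution/superficiality criterion), so in effect you have reconstructed the geometric mechanism that the citation hides. The delicate points all check out: each unit edge's $\varepsilon$-neighbourhood is a stadium of area $2\varepsilon+\pi\varepsilon^{2}\geq 2\varepsilon$ and is contained in the width-$2\varepsilon$ strip of its supporting line (distance to a segment dominates distance to its line); two such strips at angle $\theta\geq\alpha_m$ meet in a parallelogram of area $4\varepsilon^{2}/\sin\theta\leq 4\varepsilon^{2}/\sin\alpha_m$; and the Bonferroni bound $\mathrm{Area}(\Gamma^{\varepsilon})\geq 2m\varepsilon-\binom{m}{2}4\varepsilon^{2}/\sin\alpha_m$ yields $\liminf_{\varepsilon\to 0}\mathrm{Area}(\Gamma^{\varepsilon})/\varepsilon\geq 2m$ for every $m$, which is precisely the bounded-curve clause of Definition \ref{defsuper} (and $\liminf=\infty$ is equivalent to the limit being $\infty$). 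Your reduction from directions mod $1$ to line directions mod $\tfrac12$ is the right precaution, since antipodal values $u$ and $u+\tfrac12$ produce collinear tubes whose overlap is linear in $\varepsilon$, and the at-most-two-to-one passage preserves infinitude of the direction set. Your closing observations are likewise accurate: boundedness is used only to select the applicable clause of Definition \ref{defsuper}, and with finitely many line directions the ratio can saturate (a curve shuttling along one segment), so the hypothesis enters exactly where you say it does. Compared with the paper's route, you lose nothing and gain a quantitative statement --- the ratio eventually exceeds twice the number of distinct line directions realised by the edges --- at the cost of a page of elementary plane geometry that the paper delegates to \cite{curl2}.
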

\begin{proposition}\label{superficialforirrational}
Let $\Gamma$ be a curve generated by a circle homeomorphism $\varphi$ with an irrational rotation number $\varrho\in \mathbb{R}\setminus\mathbb{Q}$. It follows that:
\begin{enumerate}
  \item If $\varphi=\mathcal{R}_{\varrho}$ is the rotation, then $\Gamma$ is bounded and superficial.
  \item If  $\Gamma$ is bounded, then it is also superficial.
\end{enumerate}
\end{proposition}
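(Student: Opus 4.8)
The plan is to prove both parts by invoking the machinery already established in the excerpt, reducing each claim to a verification that infinitely many lifted points $u_n = \Phi^n(x_0)$ are distinct modulo $1$.

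For part (1), I would first recall that when $\varphi = \mathcal{R}_\varrho$ is the rigid rotation, the lift is $\Phi(x) = x + \varrho$, so $u_n = x_0 + n\varrho$. Up to the harmless shift by $x_0$ (which merely rotates the whole curve by a fixed angle and does not affect boundedness or superficiality), this is precisely the sequence treated in Proposition \ref{rotation}. Hence by Proposition \ref{rotation}(ii), for $\varrho \in \mathbb{R}\setminus\mathbb{Q}$ the curve $\Gamma$ is dense in an annulus, and in particular $\mathrm{Diam}\,\Gamma < \infty$, so $\Gamma$ is bounded. It remains to establish superficiality. Since $\varrho$ is irrational, the numbers $n\varrho \bmod 1$ are all distinct, so infinitely many $u_n$ differ modulo $1$; boundedness then lets me apply Proposition \ref{boundedsuperficial} directly to conclude that $\Gamma$ is superficial.

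For part (2), I would argue that the hypothesis of irrationality of $\varrho$ forces the generating sequence to have infinitely many values distinct modulo $1$, after which Proposition \ref{boundedsuperficial} again finishes the job. The key dynamical input here is that an orientation preserving circle homeomorphism with irrational rotation number has no periodic points (by Poincar\'e's theory): if some $u_m \equiv u_n \pmod 1$ with $m \neq n$, then $\mathfrak{p}(u_m) = \mathfrak{p}(u_n)$ would force $\varphi^{m}(\mathfrak{p}(x_0)) = \varphi^{n}(\mathfrak{p}(x_0))$, giving a periodic orbit and hence a rational rotation number, a contradiction. Therefore the points $u_n$ are pairwise distinct modulo $1$; in particular infinitely many of them are distinct modulo $1$. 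Combined with the standing assumption that $\Gamma$ is bounded, Proposition \ref{boundedsuperficial} yields superficiality.

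The substantive step I expect to require the most care is the no-periodic-points argument underpinning part (2): one must correctly translate the modular coincidence $u_m \equiv u_n \pmod 1$ of lifted points into a genuine periodic point of $\varphi$ on $S^1$, using the commutation $\mathfrak{p}\circ\Phi = \varphi\circ\mathfrak{p}$ and the fact that a point projecting to a periodic point of $\varphi$ yields, via $\Phi$, a rational rotation number. Everything else is a routine citation of the earlier propositions, with part (1) additionally relying on the explicit annulus description from Proposition \ref{rotation} to secure boundedness before superficiality can be deduced.
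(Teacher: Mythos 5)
Your proof is correct and follows essentially the same route as the paper, whose own proof is a one-line citation of Proposition \ref{rotation}, Proposition \ref{boundedsuperficial}, and the fact that a homeomorphism with irrational rotation number has no periodic orbits. The only cosmetic difference is that the paper additionally invokes the Weyl Equidistribution Theorem for the rotation case, whereas you use the weaker (but sufficient) observation that the points $n\varrho$ are pairwise distinct modulo $1$ before applying Proposition \ref{boundedsuperficial}.
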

\begin{proof} The proposition follows from Proposition \ref{rotation}, Proposition \ref{boundedsuperficial}, Weyl Equidistribution Theorem (asserting that the sequence $(n\varrho)_{n=0}^{\infty}$ for $\varrho \in \mathbb{R}\setminus\mathbb{Q}$ is equidistributed modulo 1) and the fact that $\varphi$ has no periodic orbits.  
\end{proof}

Now let us discuss the case of rational rotation number:
\begin{proposition}\label{sprz_obrot}
Let $\Gamma(u)$ be generated by $u_n=\Phi^{n}(x_0)$, where $\Phi$ is a lift of a circle homeomorphism $\varphi$ with $\varrho=p/q$ ($p$ and $q$ relatively prime), conjugated to the rational rotation $\mathcal{R}_{\varrho}$.

Then $\Gamma(u)$ is not superficial, independently of the choice of $x_0$, and the following conditions are equivalent:
\begin{enumerate}
  \item $\frac{1}{q}\sum_{k=0}^{q-1}\exp{(2\pi\imath \Phi^{k}(x_0))}=0 \in \mathbb{C}$, \label{Prop2_9it1}
  \item $\Gamma(u)$ is bounded, \label{Prop2_9it2}
  \item $\Gamma(u)$ is an equilateral $q$-polygon. \label{Prop2_9it3}
\end{enumerate}
Moreover, $\Gamma(u)$ is a regular polygon for every $x_0\in\mathbb{R}$  if and only if $\varphi=\mathcal{R}_{\varrho}$. In this case and with $x_0=0 \mod 1$, the points $z_0$, $z_1$, $z_q$, $z_{q+1}$, $z_{2q}$, $z_{2q+1}$, ..., $z_{nq}$, $z_{nq+1}$, ... lie on the line $Im(z)=0$.
\end{proposition}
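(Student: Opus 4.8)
The plan is to exploit the fact that, since $\varphi$ is conjugate to $\mathcal{R}_{p/q}$, every orbit is periodic of period exactly $q$; on the level of the lift this reads $\Phi^q(x)=x+p$ for all $x$. Combined with the lift identity $\Phi^n(x+m)=\Phi^n(x)+m$ for $m\in\mathbb{Z}$, this yields $u_{n+q}=u_n+p$, so the unit step vectors $w_n:=\exp(2\pi\imath u_n)$ are $q$-periodic. Writing $S:=\sum_{k=0}^{q-1}\exp(2\pi\imath\Phi^k(x_0))$ for the vector sum over one period, I would first establish the structural formula $z_{nq+j}=nS+z_j$ for all $n\geq 0$ and $0\leq j\leq q$. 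This identifies $\Gamma(u)$ as the single polygonal block through $z_0,\dots,z_q$ repeatedly translated by $S$, and reduces the whole analysis to the behaviour of $S$.

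From this, Part~1 and the equivalences follow quickly. If $S\neq 0$ then $|z_{nq}|=n|S|\to\infty$, while all of $\Gamma_{nq}$ stays within a bounded tube about the segment $[0,nS]$, so $\mathrm{Diam}\,\Gamma_t=\Theta(t)$ and $t/\mathrm{Diam}\,\Gamma_t$ stays bounded: the curve is unbounded and not superficial. If $S=0$ then $z_{nq+j}=z_j$, so $\Gamma$ is the fixed closed polygon through $z_0,\dots,z_{q-1}$; being of finite total length, its $\varepsilon$-tube has area $O(\varepsilon)$, so $\mathrm{Area}\,\Gamma^\varepsilon/\varepsilon$ stays bounded and again $\Gamma$ is not superficial. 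This proves Part~1 in both cases and gives the equivalence $(1)\Leftrightarrow(2)$, since boundedness holds exactly when $S=0$. For $(1)\Rightarrow(3)$, note $S=0$ forces $z_q=z_0$; because the orbit points are distinct modulo $1$ the directions $w_0,\dots,w_{q-1}$ are distinct, so the block is a genuine equilateral $q$-gon and $\Gamma$ coincides with it. Finally $(3)\Rightarrow(2)$ is immediate, as a polygon is bounded. For Part~4, specialising to $\varphi=\mathcal{R}_{p/q}$ with $x_0\in\mathbb{Z}$ gives $u_n\equiv np/q\pmod 1$, whence $S$ is the sum of all $q$-th roots of unity and vanishes; then $z_{nq}=0$ and a one-line computation gives $z_{nq+1}=\exp(2\pi\imath\,np)=1$, placing every listed point on the real axis.

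The hard part is the ``only if'' direction of Part~3. Here I would work with the displacement $\psi(x):=\Phi(x)-x$, which is continuous and $1$-periodic, so that the exterior turning angle of $\Gamma$ at step $k$ equals $2\pi\psi(\Phi^k(x_0))$. Regularity of the $q$-gon forces all these turning angles to be congruent modulo $2\pi$, i.e. the values $\psi(\Phi^k(x_0))$ to coincide modulo $1$. The crucial estimate is that $\psi$ has oscillation strictly less than $1$ on a fundamental domain: for $0<x_1-x_2<1$, monotonicity of $\Phi$ together with $\Phi(x+1)=\Phi(x)+1$ yields $0<\Phi(x_1)-\Phi(x_2)<1$ and hence $|\psi(x_1)-\psi(x_2)|<1$. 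Consequently ``congruent modulo $1$'' upgrades to ``equal'', so the $\psi(\Phi^k(x_0))$ are all equal; since they sum to $\Phi^q(x_0)-x_0=p$, each equals $p/q$. As this holds for every $x_0$, the continuous integer-valued function $\psi-p/q$ vanishes identically, so $\Phi(x)=x+p/q$ and $\varphi=\mathcal{R}_{p/q}$.

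The reverse implication of Part~3 is Proposition~\ref{rotation}(i), which I would extend to arbitrary $x_0$ by the elementary observation that replacing $x_0=0$ by a general initial point multiplies every step vector by the fixed unimodular factor $\exp(2\pi\imath x_0)$ and therefore merely rotates $\Gamma$ rigidly about the origin, preserving regularity. I expect the oscillation bound on $\psi$ to be the decisive technical ingredient, as it is exactly what prevents a non-rigid homeomorphism from producing a regular (rather than merely equilateral) polygon; the remaining arguments are routine consequences of the $q$-periodicity of the step sequence and the translation formula $z_{nq+j}=nS+z_j$.
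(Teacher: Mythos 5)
Your proposal is correct and follows essentially the same route as the paper's own proof: $q$-periodicity of the step vectors (via $\Phi^q(x)=x+p$, yielding $z_{nq+j}=nS+z_j$), the dichotomy between $z_q=z_0$ and linear growth in the direction of $S$ for boundedness, non-superficiality and the equivalences, and the observation that regularity for every $x_0$ forces all displacements to equal $p/q \bmod 1$, hence a rigid rotation. If anything, you supply two details the paper leaves implicit --- the oscillation bound $|\psi(x_1)-\psi(x_2)|<1$ for the displacement $\psi=\Phi-\mathrm{Id}$, which upgrades mod-$1$ equality of the displacements to genuine equality, and the $O(\varepsilon)$ tube-area estimate showing the bounded (closed polygon) case is not superficial, where the paper simply says there is nothing to prove.
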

Thus the curves generated by homeomorphisms conjugated to rational rotations, in contrast to those generated by pure rational rotations, can be unbounded and in case they are bounded, they might be equilateral but not regular polygons (i.e. not equiangular). Of course, they can be convex as well as not convex.

\noindent{\textit{Proof of Proposition \ref{sprz_obrot}.}} 
We remark that $\Gamma(\{\Phi^n(x_0)\})$  is bounded if and only if 
\[\sup_{n\in\mathbb{N}}\vert \sum_{k=0}^{n-1}\exp(2\pi\imath \Phi^{k}(x_0))\vert<\infty.\] 
Firstly, we will prove the equivalence of conditions \eqref{Prop2_9it1}-\eqref{Prop2_9it3}. 

Suppose that \eqref{Prop2_9it1} is satisfied, which in this case is equivalent to 
\[
\lim_{N\to\infty}\frac{1}{N}\sum_{k=0}^{N-1}\exp{(2\pi\imath\Phi^k(x_0))}=0.\]
Assume that, on the contrary, $\Gamma(u)$ is not bounded. In particular, this implies that $z_{q}\neq z_0$ because otherwise $\Gamma(u)$ would be a closed curve. So let $c=\vert z_q-z_0\vert$, where $c>0$. Then by periodicity of the orbit $\{\varphi^n(\exp{(2\pi\imath x_0)})\}$, we obtain $\vert z_{2q}-z_0\vert=2c$ and inductively, $\vert z_{nq}-z_0\vert=n c$. But then $\lim_{n\to\infty}\frac{1}{nq}\vert z_{nq}\vert=\frac{c}{q}$ which contradicts \eqref{Prop2_9it1}. On the other hand, if $\Gamma$ is bounded then its Birkhoff average must vanish which means that \eqref{Prop2_9it1} holds.These arguments give equivalence of \eqref{Prop2_9it1} and \eqref{Prop2_9it2}. 

Now assume that \eqref{Prop2_9it2} is satisfied. By periodicity of the orbit of $\exp{(2\pi\imath x_0)}\in S^1$ this means that $z_q=z_0$ since otherwise $\Gamma$ would grow unbounded in the direction of $v=z_q-z_0$. But if $z_q=z_0$ then $\Gamma$ must be an equilateral polygon with $q$ sides (the fact that the sides of this polygon must be of equal length is simply due to the fact that they are vectors of length 1 by definition of a curlicue) and we obtain that \eqref{Prop2_9it2}$\implies$\eqref{Prop2_9it3}. The case \eqref{Prop2_9it3}$\implies$ \eqref{Prop2_9it2} is trivial.

We already know that if $\varphi=\mathcal{R}_{\varrho}$ with $\varrho=p/q$, then $\Gamma(u)$ is a regular polygon with $q$-sides for every $x_0$.  On the other hand, if $\Gamma(u)$ is a regular polygon with $q$ sides for every $x_0$, then all the displacements $\Phi^k(x_0)-\Phi^{k-1}(x_0) \mod 1$ for every $k\in\mathbb{N}$ and $x_0\in\mathbb{R}$ must be equal to $p/q$ which means that $\varphi$ is a rigid rotation. Since then for $x_0=0 \mod 1$ we have $y_{n}=\sum_{k=0}^{n-1}\sin(2\pi \frac{kp}{q})$, where $z_n=(x_n,y_n)$, the last statement follows easily. 

It remains to show non-superficality of $\Gamma(u)$.  For bounded case there is nothing to prove. Similarly, if $\Gamma$ is unbounded then we check the condition $\lim_{t\to\infty}\frac{t}{\textrm{Diam}\Gamma_t}=\infty$. By choosing the subsequence $t_n=nq$ we obtain that $\textrm{Diam}\Gamma_{t_n} \geq \vert z_{nq}-z_0\vert =n c$, where $c=\vert z_{q}-z_0\vert$ and consequently $\lim_{n\to\infty}\frac{t_n}{\textrm{Diam}\Gamma_{t_n}}\leq q/c<\infty$. \qed

\vskip 0.3cm

\begin{example}\label{Example 1}
Let $\varphi$ be conjugated to a rational rotation, i.e. $\Phi=h^{-1}\circ \mathcal{R}_{\varrho} \circ h$, where $\varrho\in\mathbb{Q}$ and $h$ is a lift of some other orientation preserving circle homeomorphisms. For example, define:
\begin{displaymath}
h(x)=\left\{
       \begin{array}{ll}
         \frac{2}{3}x, & \hbox{$0\leq x\leq \frac{3}{8}$;} \\
         2(x-\frac{1}{4}), & \hbox{$\frac{3}{8}\leq x\leq  \frac{1}{2}$;} \\
         \frac{2}{3}(x+\frac{1}{4}), & \hbox{$\frac{1}{2}\leq x\leq \frac{7}{8}$;} \\
         2(x-\frac{1}{2}), & \hbox{$\frac{7}{8}\leq x\leq 1$.}
       \end{array}
     \right.
\end{displaymath}

By applying the rule $h(x+1)=h(x)+1$ (similarly, for $h^{-1}(x)$) we extend $h$ onto an orientation preserving homeomorphism of $\mathbb{R}$.

Let then $\varrho=1/4$. For an arbitrary choice of $x_0$ the orbit $\{\varphi(\hat{x}_0)\}_{n=0}^{\infty}$, $\hat{x}_0=\exp{(2\pi\imath x_0)}$, is periodic with period $4$. In particular, we compute that $\Phi(0)=3/8$, $\Phi^2(0)=1/2$, $\Phi^3(0)=7/8$, $\Phi^4(0)=1 = 0 \mod 1$ and $\Phi^5(0) \mod 1=\Phi(0)$ etc. Thus the displacements $\Phi^{k}(0)-\Phi^{k-1}(0)$ along the trajectory are not equal but, as we easily verify, their average vanishes:
\begin{displaymath}
\frac{1}{4}\sum_{k=0}^3\exp{(2\pi\imath \Phi^k(0))}=0\in\mathbb{C}
\end{displaymath}
According to the above proposition the curve $\Gamma$, evaluated over $\{\Phi^n(0)\}$, is an equilateral polygon, but not regular: it is closed as the average is $0$ but it is not regular since the displacements are not all equal. Indeed, the displacements are alternatingly equal to $3/8$ and $1/8$ and, as we see in Figure \ref{fig:rownoleglobok} (left), $\Gamma$ is a rhombus but not a square. 
\end{example}

\begin{example}\label{Example 2}
Let $x_0=0$ and $h$ be as in Example \ref{Example 1} but take $\varrho=1/5$. In this case the orbit of $x_0$ is obviously periodic (modulo 1) with period 5 but the exponential average along the orbit does not vanish: $\frac{1}{5}\sum_{k=0}^4\exp{(2\pi\imath \Phi^k(0))}\approx (-0.0078,-0.0273)$. Thus $\Gamma$ is unbounded, as reflected in Figure  \ref{fig:rownoleglobok} (right).

\begin{figure}[h!]
\centering
                \includegraphics[width=0.475\textwidth]{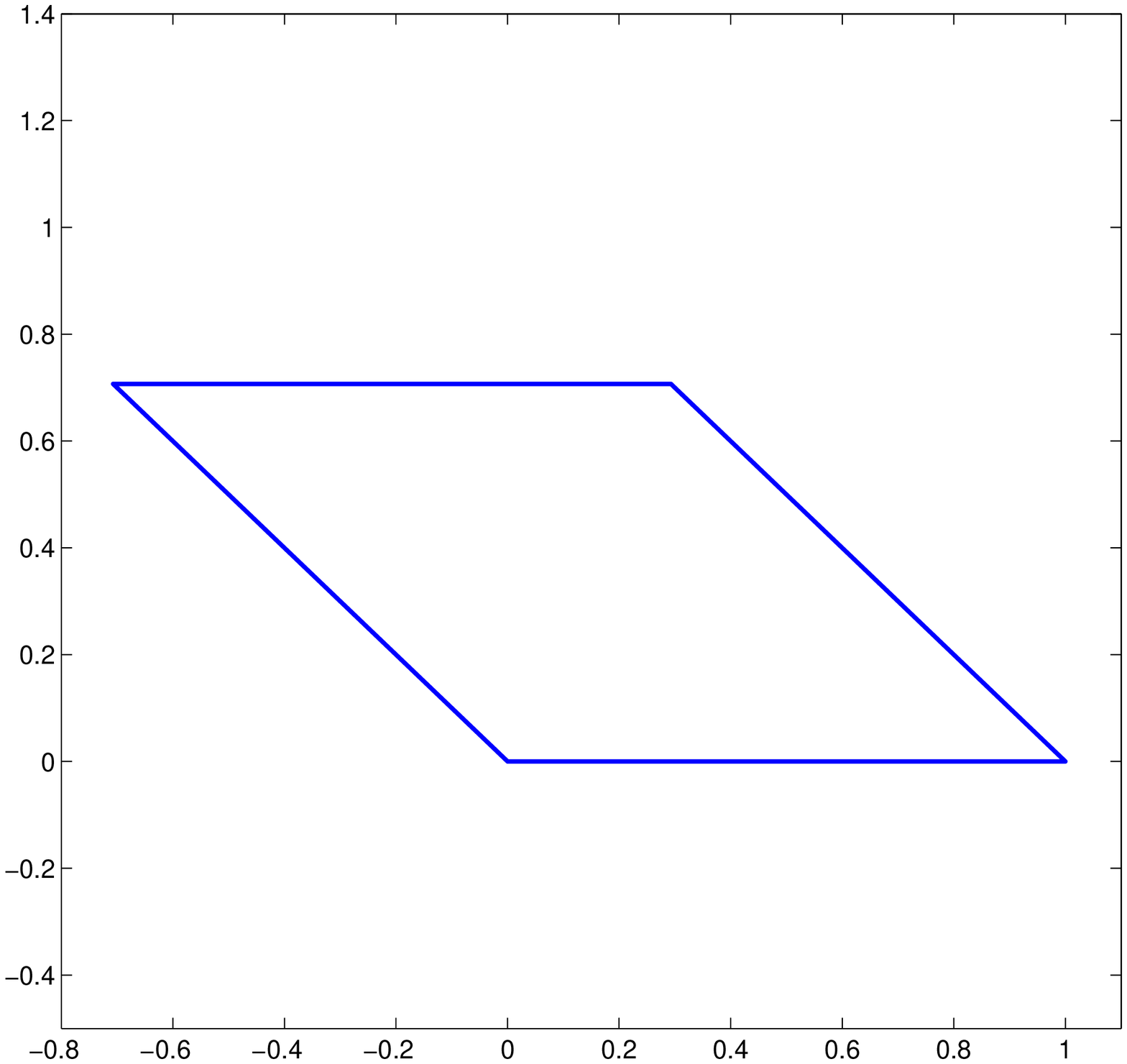}
                \hspace{0.3cm} \includegraphics[width=0.475\textwidth]{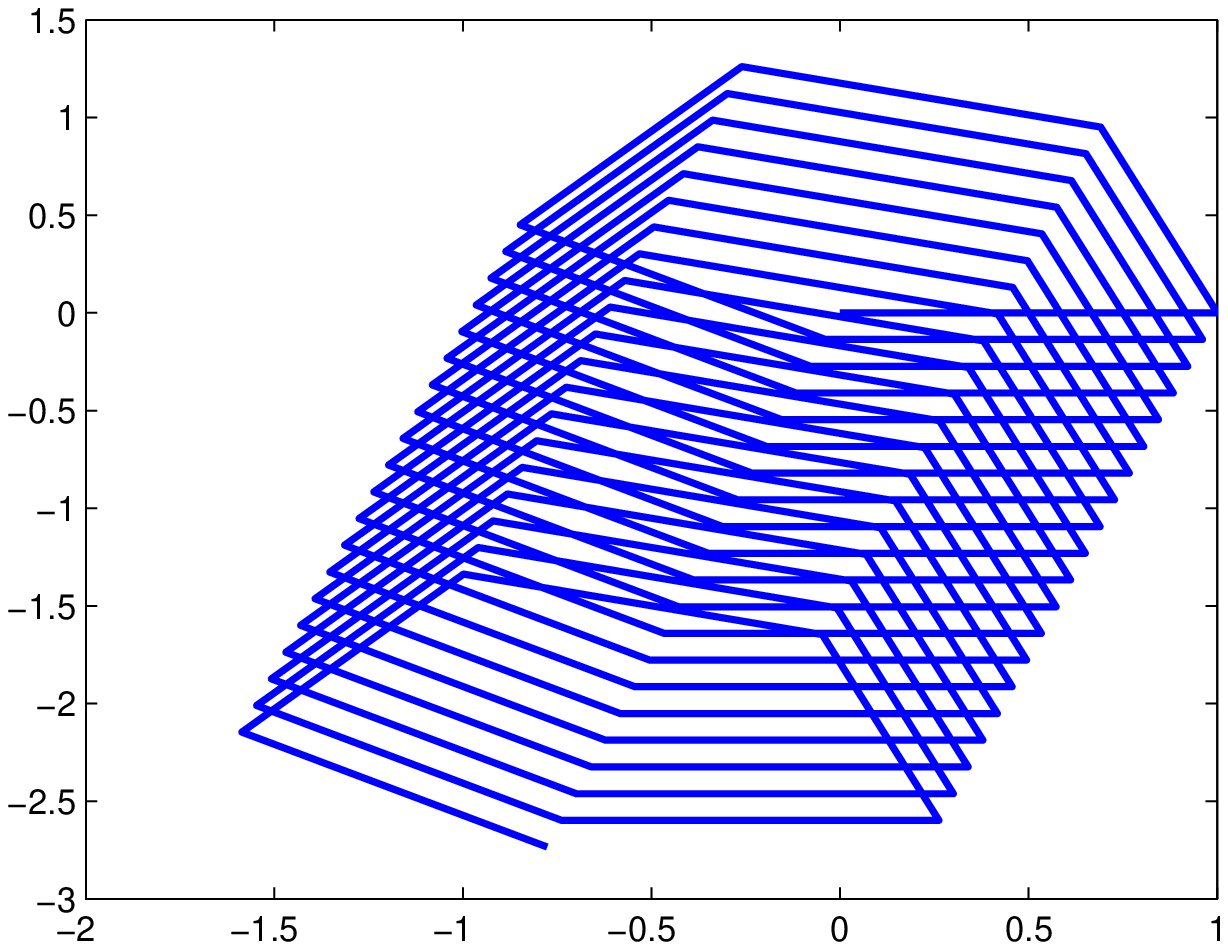}
             \caption{$\Gamma_{100}(0)$ generated by $\varphi$ conjugated with rational rotation as in Example \ref{Example 1} ($\varrho=1/4$, left) and 2 ($\varrho=1/5$, right).}
        \label{fig:rownoleglobok}
\end{figure}

Finally, let us also remark that the boundedness of the curve in Proposition \ref{sprz_obrot} might depend on $x_0$. Indeed, consider for example the lift $\Phi=h^{-1}\circ \mathcal{R}_{1/2}\circ h$, where $h(x)= - 2(x-1/2)^2+1/2$ for $0\leq x\leq 1/2$ and $h(x)= 2(x-1/2)^2+1/2$ for $1/2\leq x\leq 1$ and $\mathcal{R}_{1/2}(x)=x+1/2$ is a lift of a rotation by $\pi$. Let $\Gamma(x)$ denote the curve generated by the orbit $\{\Phi^n(x)\}$. Compare $\Gamma(x_0)$ and $\Gamma(y_0)$ where $x_0=0$, $y_0\in (0,1/2)$. Then $\Gamma(x_0)$ is bounded whereas $\Gamma(y_0)$ is not.  
\end{example}

Now we move to the case of a so-called semi-periodic homeomorphism, which is a homeomorphism with rational rotation number but not conjugated to the rotation,  i.e. when apart from periodic orbits we also have some non-periodic ones. 
\begin{proposition}
Suppose that $\Gamma=\Gamma(x_0)$ is a curve generated by the lift of the orbit of $x_0$ of a semi-periodic circle homeomorphism $\varphi$ with  $\varrho=\frac{p}{q}$.
Then:
\begin{itemize}
  \item If $x_0$ is a periodic point, then the equivalence of conditions \eqref{Prop2_9it1}-\eqref{Prop2_9it3} in Proposition \ref{sprz_obrot} also applies to $\Gamma$. Similarly, $\Gamma$ (bounded or not) is not superficial and it is a regular polygon if and only if the displacements along the periodic orbit $\{\varphi^n(\exp{(2\pi\imath x_0)})\}$ are all equal.
  \item If $x_0$ is not a periodic point and  $\Gamma$ is bounded, then $\Gamma$ is also superficial.
\end{itemize}
\end{proposition}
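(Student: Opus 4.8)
The plan is to handle the two bullets separately, and for the first to observe that the proof of Proposition \ref{sprz_obrot} never used the hypothesis that $\varphi$ is globally conjugate to a rotation, but only the periodicity of the single orbit under consideration. Recall that for an orientation preserving circle homeomorphism with rotation number $p/q$ in lowest terms every periodic point has minimal period $q$, and the chosen lift satisfies $\Phi^q(x_0)=x_0+p$. Consequently $\exp(2\pi\imath\Phi^{n+q}(x_0))=\exp(2\pi\imath(\Phi^n(x_0)+p))=\exp(2\pi\imath\Phi^n(x_0))$, so the sequence of unit displacement vectors is $q$-periodic. This is exactly the property exploited in the proof of Proposition \ref{sprz_obrot}: the boundedness criterion, the identity $\lvert z_{nq}-z_0\rvert=nc$ with $c=\lvert z_q-z_0\rvert$, the implication that $z_q=z_0$ in the bounded case, and the estimate along the subsequence $t_n=nq$ yielding non-superficiality all rest solely on this periodicity. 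I would therefore transcribe that argument to obtain the equivalence of \eqref{Prop2_9it1}--\eqref{Prop2_9it3} together with the non-superficiality of $\Gamma$ for a periodic $x_0$, whether or not $\Gamma$ is bounded.

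For the regular-polygon characterization in the first bullet I would argue through turning angles. The segment of $\Gamma$ emanating from $z_k$ points in the direction $2\pi\Phi^k(x_0)$, so the exterior angle at the vertex $z_k$ equals $2\pi(\Phi^k(x_0)-\Phi^{k-1}(x_0))$ modulo $2\pi$, that is, $2\pi$ times the $k$-th displacement modulo $1$. Every edge has length $1$, so $\Gamma$ is automatically equilateral; it is regular precisely when it is in addition equiangular, which happens exactly when all these angular displacements coincide. If the $q$ angular displacements are all equal to some $\delta$, then $\Phi^k(x_0)\equiv x_0+k\delta \pmod 1$, and the return condition $\Phi^q(x_0)\equiv x_0\pmod 1$ forces $q\delta\equiv 0\pmod 1$, whence $\delta=j/q$; the combinatorial ordering of the orbit then realizes the rotation by $j/q$, so $j=p$ because the rotation number is $p/q$. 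Thus equal displacements yield precisely the regular $\{q,p\}$ polygon, and conversely a regular $\{q,p\}$ polygon has all exterior angles equal to $2\pi p/q$, establishing the stated equivalence.

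For the second bullet I would appeal directly to Proposition \ref{boundedsuperficial}, whose only nontrivial hypothesis is that infinitely many $u_n$ are distinct modulo $1$. When $x_0$ is not periodic I claim that in fact \emph{all} the $u_n=\Phi^n(x_0)$ are pairwise distinct modulo $1$: if $\Phi^n(x_0)\equiv\Phi^m(x_0)\pmod 1$ for some $n>m$, equivalently $\varphi^n(\hat x_0)=\varphi^m(\hat x_0)$ with $\hat x_0=\exp(2\pi\imath x_0)$, then applying the homeomorphism $\varphi^{-m}$ yields $\varphi^{n-m}(\hat x_0)=\hat x_0$, contradicting the non-periodicity of $x_0$. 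Hence the hypothesis of Proposition \ref{boundedsuperficial} is satisfied, and since $\Gamma$ is assumed bounded it is superficial.

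The bulk of the work here is bookkeeping rather than a genuine obstacle; the two points demanding the most care are verifying that the proof of Proposition \ref{sprz_obrot} really invokes nothing beyond $q$-periodicity of the displacement vectors, and pinning down the common value $p/q$ in the equiangular case so that ``equilateral and equiangular'' is correctly identified with the regular $\{q,p\}$ polygon rather than with some other equiangular configuration.
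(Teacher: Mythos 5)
Your proposal is correct and takes essentially the same route as the paper, whose proof likewise handles the periodic case by noting that the argument of Proposition \ref{sprz_obrot} uses only the $q$-periodicity of the orbit (hence of the unit displacement vectors), and the non-periodic case by observing that infinitely many (indeed all) $u_n$ are distinct modulo $1$ and invoking Proposition \ref{boundedsuperficial}. Your turning-angle derivation of the regular-polygon equivalence and the identification of the common displacement with $p/q$ simply make explicit details the paper leaves implicit.
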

\begin{proof} The first statement can be proved exactly as Proposition \ref{sprz_obrot}. As for the second statement, when $x_0$ is not a periodic point, then its orbit is attracted  by some periodic orbit of $\varphi$ (see e.g. \cite{katok}) but infinitely many (all) $u_n$'s are different modulo 1 and thus, again on the account of Proposition \ref{boundedsuperficial}, bounded $\Gamma$ is also superficial. \end{proof} 

In particular we realize that periodic orbits of circle homeomorphisms may give rise to unbounded curves. 

\section{Connection with the cohomological equation}\label{secConn}
In this part we are going to show how  the shape of a bounded curlicue generated by a minimal (transitive) circle homeomorphism $\varphi$ is related to the solution of the induced cohomological equation. Till the end of this part we assume that $\varphi\in\mathbb{R}\setminus\mathbb{Q}$. 

It is clear that when the  Birkhoff sums are bounded, i.e. 
\[
\sup_{n\in\mathbb{N}}\vert \sum_{k=0}^{n-1}\exp(2\pi\imath u_k)\vert<\infty, 
\]
then the curve $\Gamma$ is bounded as well. On the other hand, when the  Birkhoff sums are unbounded and the Birkhoff average does not vanish, i.e. 
\[
\lim_{n\to\infty}\frac{1}{n}\vert \sum_{k=0}^{n-1}\exp(2\pi\imath u_k)\vert\neq0,\]
then the curlicue $\Gamma(u)$ is unbounded and grows in the direction of the nonzero vector $\lim_{n\to\infty}\frac{1}{n}\sum_{k=0}^{n-1}\exp(2\pi\imath u_k)=(c_1,c_2)\in \mathbb{C}$. 

In order to verify how consecutive points $z_n$ are located in the plane $\mathbb{C}$ let us recall the classical (Theorem \ref{classicalDK}) and improved (Theorem \ref{improved}) version of the Denjoy-Koksma inequality, using the continued fraction expansion of $\varrho$ ($\varrho\in [0,1)\cap\mathbb{R}\setminus\mathbb{Q}$):
\begin{displaymath}
\varrho=\frac{1}{a_0+\frac{1}{a_1+\frac{1}{\ldots}}}=:[a_0,a_1,a_2,a_3, \ldots],
\end{displaymath}
where $a_0,a_1,...\in \mathbb{Z}$ and 
\begin{displaymath}
\frac{p_n}{q_n}:=\frac{1}{a_0+\frac{1}{a_1+\frac{1}{\frac{\ddots}{a_{n-1}+\frac{1}{a_n}}}}}=:[a_0,a_1,\ldots, a_n]
\end{displaymath}


In all the forthcoming theorems and propositions $q_n$ denotes a denominator of a rational approximation of $\varrho$ by the continued fraction expansion.
\begin{theorem}[cf.\cite{herman}]\label{classicalDK} Let $\varphi:S^1\to S^1$ be a homeomorphism with irrational rotation number $\varrho$ and $g:S^1\to\mathbb{R}$ a real function (not necessary continuous) with bounded variation $\mathrm{Var}(g)$. Then
\begin{equation}\label{nierDK}
\vert \sum_{i=0}^{q_n-1}g(\varphi^i(x))-q_n\int_{S^1}g\;d\mu\vert\leq \mathrm{Var}(g), \quad \forall_{x\in S^1},
\end{equation}
 where $\mu$ is the only invariant Borel probability measure of $\varphi$.
\end{theorem}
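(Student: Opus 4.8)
The plan is to deduce the inequality from two classical ingredients: the reduction of an arbitrary homeomorphism to the rigid rotation by its rotation number, and a sharp bound on the discrepancy of the first $q_n$ points of a rotation orbit.

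First I would remove the homeomorphism. Since $\varrho\in\mathbb{R}\setminus\mathbb{Q}$, the map $\varphi$ has no periodic points and, by the Poincar\'e classification, is semiconjugate to the rotation $\mathcal{R}_\varrho$; in particular it is uniquely ergodic, which is why $\mu$ is well defined. Fixing a base point $\theta_0$, set $F(\theta)=\mu([\theta_0,\theta))$. Then $F:S^1\to S^1$ is continuous (as $\mu$ is non-atomic), monotone of degree one, it pushes $\mu$ forward to Lebesgue measure $\lambda$, and it intertwines $\varphi$ with $\mathcal{R}_\varrho$, i.e. $F(\varphi^i(x))=F(x)+i\varrho \pmod 1$. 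Define $G:=g\circ F^{-1}$, choosing the values of $G$ at the countably many points where $F^{-1}$ is multivalued (the images of wandering intervals) so that $G(F(x)+i\varrho)=g(\varphi^i(x))$; this is legitimate because the points $F(x)+i\varrho$ are pairwise distinct. Since $F^{-1}$ is monotone, $\mathrm{Var}(G)\le \mathrm{Var}(g)$, and the change of variables gives $\int_{S^1}g\,d\mu=\int_0^1 G\,d\lambda$ together with $\sum_{i=0}^{q_n-1}g(\varphi^i(x))=\sum_{i=0}^{q_n-1}G(F(x)+i\varrho)$. Thus it suffices to prove the inequality for $\mathcal{R}_\varrho$ with $g$ replaced by $G$ and $\mu$ by $\lambda$.

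Second, for the rotation I would invoke Koksma's inequality: for any $G$ of bounded variation on $[0,1]$ and any points $t_0,\dots,t_{N-1}\in[0,1)$,
\[
\Bigl|\sum_{i=0}^{N-1}G(t_i)-N\!\int_0^1 G\,d\lambda\Bigr|\le \mathrm{Var}(G)\cdot\bigl(N D_N^{*}\bigr),
\]
where $D_N^{*}=\sup_{0\le t\le1}\bigl|\tfrac1N\#\{i<N:\,t_i<t\}-t\bigr|$ is the star-discrepancy. Applied to $t_i=F(x)+i\varrho$ and $N=q_n$, this reduces everything to the estimate $q_nD_{q_n}^{*}\le 1$.

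The main obstacle is this last, purely number-theoretic, estimate. I would prove $q_nD_{q_n}^{*}\le1$ from the best-approximation property of the convergents, namely $\|k\varrho\|\ge\|q_{n-1}\varrho\|$ for $0<k<q_n$ (with $\|\cdot\|$ the distance to the nearest integer), together with $|q_n\varrho-p_n|<1/q_{n+1}$, which controls how the $q_n$ points $\{F(x)+i\varrho\}$ interleave with the equipartition of $[0,1)$ into intervals of length $1/q_n$; equivalently, this is the content of the three-distance theorem for an orbit of length $q_n$. It is tempting to bypass discrepancy entirely by claiming that the $q_n$ orbit points fall one into each interval $[j/q_n,(j+1)/q_n)$, which would give the bound at once by comparing $G$ at the orbit point with its $\lambda$-average on the interval; however this is false in general (already for $\varrho=\sqrt2-1$ and $q_n=12$ two points share one interval), and the resulting slack is exactly what pushes the discrepancy up to its extremal value $1/q_n$. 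Hence the discrepancy formulation, which tolerates this clustering, is the robust route; combining it with $\mathrm{Var}(G)\le\mathrm{Var}(g)$ yields the bound $\mathrm{Var}(g)$ and completes the proof.
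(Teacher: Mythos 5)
The paper does not prove Theorem \ref{classicalDK} at all --- it quotes it from Herman --- so your attempt has to be measured against the classical argument. Your first reduction (transporting $\varphi$ to $\mathcal{R}_\varrho$ via $F(\theta)=\mu([\theta_0,\theta))$, with $F_*\mu=\lambda$, $\mathrm{Var}(G)\le\mathrm{Var}(g)$, and matching values on the countably many plateau images) is correct and is indeed how the general case is classically reduced to the rotation. The genuine gap is your number-theoretic lemma: $q_nD^*_{q_n}\le 1$ is \emph{false} for the sequences you need. Your application requires the star discrepancy of $t_i=F(x)+i\varrho$ with an \emph{arbitrary} anchor $F(x)$, so you may not assume $0$ is one of the points; then the local discrepancy deficit over an empty initial interval $[0,t)$ can be as large as the maximal gap of the $q_n$ points, which by the three-distance theorem equals $\|q_{n-1}\varrho\|+\|q_n\varrho\|$ and exceeds $1/q_n$. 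Concretely, for $\varrho=(\sqrt5-1)/2$ and $q_n=5$ one has $\|3\varrho\|+\|5\varrho\|\approx 0.2361>1/5$; anchoring $c$ so that one point sits just below $0$ and the next at $\approx 0.2361$ gives $q_nD^*_{q_n}\approx 1.18$. With the true bound (only $q_nD^*_{q_n}<2$ in general), Koksma yields $2\,\mathrm{Var}(g)$, not the stated sharp constant $\mathrm{Var}(g)$, so your proof does not close.

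Ironically, the route you explicitly rejected is the correct classical one, and your counterexample against it only refutes a wrongly anchored grid. Writing $t_i=t+ik_i/q_n$-grid plus deviation, the deviations $\epsilon_i=i(\varrho-p_n/q_n)$, $0\le i<q_n$, all have the \emph{same sign} and satisfy $|\epsilon_i|<1/q_{n+1}\le 1/q_n$; hence if you anchor the equipartition into $q_n$ cells of length $1/q_n$ at the orbit's own initial point $t$ and close the cells on the side dictated by the sign of $\varrho-p_n/q_n$, each cell contains \emph{exactly one} of the $q_n$ orbit points. Your $\varrho=\sqrt2-1$, $q_n=12$ example (where $\varrho<5/12$, so deviations are leftward) only shows that the left-closed cells $[j/q_n,(j+1)/q_n)$ fail; the right-closed ones succeed. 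Then
\begin{equation*}
\Bigl|\sum_{i=0}^{q_n-1}G(t_i)-q_n\int_0^1 G\,d\lambda\Bigr|\le\sum_{i=0}^{q_n-1}\Bigl|G(t_i)-q_n\int_{I_i}G\,d\lambda\Bigr|\le\sum_{i=0}^{q_n-1}\mathrm{Var}_{I_i}(G)\le\mathrm{Var}(G)\le\mathrm{Var}(g),
\end{equation*}
since $t_i\in I_i$, $\lambda(I_i)=1/q_n$, and the $I_i$ are disjoint and cover the circle. This one-point-per-cell partition (equivalently, a $\mu$-equipartition of $S^1$ adapted to the orbit), combined with your reduction step, is precisely Herman's proof of Denjoy--Koksma with the sharp constant; the discrepancy formulation is not the robust route here but the lossy one.
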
 

\begin{theorem}[Corollary  in \cite{navas}]\label{improved}
Under the assumptions and notation of Theorem \ref{classicalDK}, if $\varphi$ is a  $C^{1+bv}$ circle diffeomorphism and $g$ is $C^1$, it holds that
\begin{equation}\label{impr}
\|\sum_{i=0}^{q_n-1}g(\varphi^i)-q_n\int_{S^1}g\;d\mu\|_{C^0}\to 0, \quad \textrm{as} \ n\to\infty.
\end{equation}
\end{theorem}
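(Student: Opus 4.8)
The plan is to deduce this improvement from the classical Denjoy--Koksma inequality of Theorem \ref{classicalDK} together with the fact that, for a $C^{1+bv}$ diffeomorphism, the iterate $\varphi^{q_n}$ converges uniformly to the identity. Throughout write $S_{q_n}g:=\sum_{i=0}^{q_n-1}g\circ\varphi^i$ and $M_n:=q_n\int_{S^1}g\,d\mu$. Since $\mu$ is $\varphi$-invariant, $\int_{S^1}S_{q_n}g\,d\mu=M_n$, so the function $S_{q_n}g-M_n$ has zero $\mu$-average and therefore
\[
\|S_{q_n}g-M_n\|_{C^0}\le \mathrm{osc}(S_{q_n}g):=\max S_{q_n}g-\min S_{q_n}g .
\]
Thus the whole statement \eqref{impr} is \emph{equivalent} to showing $\mathrm{osc}(S_{q_n}g)\to 0$, and Theorem \ref{classicalDK} already supplies the a priori bound $\mathrm{osc}(S_{q_n}g)\le 2\,\mathrm{Var}(g)$, which is exactly the estimate to be upgraded.

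First I would record the telescoping identity $S_{q_n}g\circ\varphi-S_{q_n}g=g\circ\varphi^{q_n}-g$, valid because the two Birkhoff sums differ only in their first and last terms. Hence $\|S_{q_n}g\circ\varphi-S_{q_n}g\|_{C^0}\le \|g'\|_{C^0}\,\|\varphi^{q_n}-\mathrm{id}\|_{C^0}$, and the right-hand side tends to $0$: indeed $\mu([x,\varphi^{q_n}(x)])=\|q_n\varrho\|\to 0$, and since the dynamical partitions of a minimal $C^{1+bv}$ diffeomorphism refine to points (Denjoy's theorem), the Lebesgue length of the arcs $[x,\varphi^{q_n}(x)]$ is uniformly small as well. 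In other words $S_{q_n}g$ is asymptotically $\varphi$-invariant, with an error $\epsilon_n:=\|g\circ\varphi^{q_n}-g\|_{C^0}\to 0$.

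The remaining, and main, difficulty is to promote this asymptotic invariance, together with the uniform oscillation bound, into $\mathrm{osc}(S_{q_n}g)\to 0$. Asymptotic invariance of a single \emph{fixed} continuous function would give the conclusion at once by minimality, but here the function $S_{q_n}g$ changes with $n$, so the number of iterates needed to carry its minimum near its maximum is not controlled by $\epsilon_n$ alone. To close the gap I would write, for arbitrary $x,y$,
\[
S_{q_n}g(x)-S_{q_n}g(y)=\int_{S^1}g'(s)\,N_n(s)\,ds,\qquad N_n(s)=\#\{0\le i<q_n:\ s\in\varphi^i((y,x))\},
\]
using that $\oint_{S^1}g'=0$ makes the arc-integrals unambiguous. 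Exploiting $\oint g'=0$ once more, one may subtract from $N_n$ its mean value $q_n\,\mathrm{Leb}((y,x))$, so that the estimate is reduced to controlling the fluctuation of $N_n$ tested against the continuous function $g'$. This is precisely where the $C^{1+bv}$ hypothesis must enter, through the bounded-distortion estimates for the partitions generated by $\{\varphi^i\}_{i<q_n}$: these atoms shrink uniformly and have comparable sizes (the three-gap structure transported by the conjugacy), and they carry distortion bounds stemming from $\mathrm{Var}(\log\varphi')<\infty$; combined with the uniform continuity of $g'$ on the shrinking atoms, they should yield the desired $o(1)$ in place of the crude $\mathrm{Var}(g)$ bound. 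I expect this final step---turning the combinatorial/distortion control of the iterated partition into genuine cancellation---to be the real obstacle, and it is exactly the content of the corollary of \cite{navas} quoted here.
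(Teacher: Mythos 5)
First, a point of comparison: the paper does not prove this statement at all --- it is imported verbatim as a corollary of the main theorem of \cite{navas} (Navas--Triestino), so there is no internal proof to measure your attempt against; your proposal has to stand on its own as a reconstruction of that result.

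Taken on its own terms, everything you actually carry out is correct but only reproduces the bound you set out to improve. The zero-mean reduction of $\|S_{q_n}g - M_n\|_{C^0}$ to $\mathrm{osc}(S_{q_n}g)$, the telescoping identity $S_{q_n}g\circ\varphi - S_{q_n}g = g\circ\varphi^{q_n}-g$ with $\|\varphi^{q_n}-\mathrm{id}\|_{C^0}\to 0$ (Denjoy plus uniform continuity of the conjugacy), and the representation $S_{q_n}g(x)-S_{q_n}g(y)=\int g'\,N_n$ are all fine. But observe that applying Theorem \ref{classicalDK} to the indicator $\chi_{(y,x)}$ along $\varphi^{-1}$ (whose closest-return denominators are the same $q_n$) gives $|N_n(s)-q_n\mu((y,x))|\le \mathrm{Var}(\chi_{(y,x)})=2$ pointwise; so after the re-centering permitted by $\oint g'=0$, your representation yields only $|S_{q_n}g(x)-S_{q_n}g(y)|\le 2\|g'\|_{L^1}$ --- exactly the classical $O(1)$ Denjoy--Koksma estimate, with no improvement.

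The genuine gap is therefore the final step, which you explicitly leave as an expectation: converting the distortion control into actual cancellation of $\int g'\,\bigl(N_n - q_n\mu((y,x))\bigr)$. This is not a routine verification --- it is essentially equivalent to the main theorem of \cite{navas}, the uniqueness of $\varphi$-invariant distributions of order $1$, and its proof requires a \emph{quantitative} use of the $C^{1+bv}$ hypothesis, namely Herman's estimate that $(\varphi^{q_n})'\to 1$ uniformly (i.e.\ $\varphi^{q_n}\to\mathrm{id}$ in the $C^1$ topology). Your sketch invokes $C^{1+bv}$ only through the qualitative Denjoy theorem, i.e.\ through a $C^0$-conjugacy to the rotation; but a $C^0$-conjugacy alone cannot suffice, since the quantities $N_n-q_n\mu((y,x))$ are bounded integer-valued fluctuations with $2q_n$ jump points, and testing them against $g'$ produces smallness only if the relative positions of orbit points inside the atoms of the dynamical partition equidistribute in a way controlled by bounded distortion. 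Naming ``three-gap structure, bounded distortion, uniform continuity of $g'$'' identifies the right ingredients but supplies no mechanism, so the proposal is an honest partial reduction, not a proof: the $O(1)\Rightarrow o(1)$ upgrade, which is the entire content of the quoted corollary, remains unestablished.
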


\begin{proposition}\label{uwaga1}
Let $\varphi$ be a circle homeomorphism with irrational rotation number $\varrho$ and a lift $\Phi:\mathbb{R}\to\mathbb{R}$. Suppose that the Birkhoff average equals $\lim_{N\to\infty}\frac{1}{N}\sum_{k=0}^{N-1}\exp(2\pi\imath \Phi^k(x))=c=(c_1,c_2)\in \mathbb{C}$ (allowing also for $(c_1,c_2)=(0,0))$.

Then there exists a constant $M$ such that for every $x\in\mathbb{R}$ we have
\begin{equation}\label{nowe_rn1}
\vert z_{q_n}(x)-q_n c\vert\leq M.
\end{equation}
Moreover, if $\varphi$ is $C^{1+bv}$ diffeomorphism then
\begin{equation}\label{nowe_rn2}
\forall_{\varepsilon>0}\ \exists_{N\in\mathbb{N}}\ \forall_{n>N} \ \forall{x\in\mathbb{R}} \ \vert z_{q_n}(x)-q_n c\vert <\varepsilon.
\end{equation}
\end{proposition}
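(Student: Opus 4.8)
The plan is to recognize each vertex $z_{q_n}(x)$ as a Birkhoff sum of a single circle observable and then to apply the two Denjoy--Koksma inequalities recalled above, coordinate by coordinate. First I would set $g\colon S^1\to\mathbb{C}$, $g(\mathfrak{p}(x)):=\exp(2\pi\imath x)$; this is well defined because $\exp(2\pi\imath x)$ depends only on $x\bmod 1$, and by the defining relation \eqref{glowne1},
\[
z_{q_n}(x)=\sum_{i=0}^{q_n-1}\exp(2\pi\imath\,\Phi^i(x))=\sum_{i=0}^{q_n-1}g(\varphi^i(\hat x)),\qquad \hat x:=\mathfrak{p}(x).
\]
Decomposing $g=g_1+\imath g_2$ with $g_1(\mathfrak{p}(x))=\cos(2\pi x)$ and $g_2(\mathfrak{p}(x))=\sin(2\pi x)$, each component is $C^{\infty}$ on $S^1$, hence of bounded variation and of class $C^1$.

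The one preliminary point that needs care is the identification of the assumed orbit average $c$ with the space average against the invariant measure. Since $\varphi$ has irrational rotation number it is uniquely ergodic; letting $\mu$ be its only invariant Borel probability measure, the Birkhoff averages $\frac1N\sum_{k=0}^{N-1}g(\varphi^k(\hat x))$ converge uniformly in $x$ to $\int_{S^1}g\,d\mu$. Comparing with the hypothesis yields $\int_{S^1}g_1\,d\mu=c_1$ and $\int_{S^1}g_2\,d\mu=c_2$.

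Granting this, \eqref{nowe_rn1} is immediate: applying Theorem \ref{classicalDK} to $g_1$ and to $g_2$ gives, for every $x\in\mathbb{R}$ and every $n$,
\[
\bigl|\operatorname{Re}z_{q_n}(x)-q_n c_1\bigr|\le \mathrm{Var}(g_1),\qquad \bigl|\operatorname{Im}z_{q_n}(x)-q_n c_2\bigr|\le \mathrm{Var}(g_2),
\]
so that $\vert z_{q_n}(x)-q_n c\vert\le \mathrm{Var}(g_1)+\mathrm{Var}(g_2)=:M$, a constant depending neither on $x$ nor on $n$. For \eqref{nowe_rn2}, when $\varphi$ is $C^{1+bv}$ I would feed $g_1,g_2\in C^1(S^1)$ into the improved inequality (Theorem \ref{improved}), obtaining $\|\operatorname{Re}z_{q_n}(\cdot)-q_n c_1\|_{C^0}\to0$ and $\|\operatorname{Im}z_{q_n}(\cdot)-q_n c_2\|_{C^0}\to0$ as $n\to\infty$; combining these two uniform estimates furnishes, for each $\varepsilon>0$, an index $N$ beyond which $\vert z_{q_n}(x)-q_n c\vert<\varepsilon$ uniformly in $x$.

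The hard part is really just the identification $c=\int_{S^1}g\,d\mu$, because the Denjoy--Koksma inequalities are stated with respect to the invariant measure whereas the hypothesis is phrased through the orbit average; this is precisely the place where unique ergodicity of circle homeomorphisms with irrational rotation number enters. The remainder is a routine, component-wise use of the two cited inequalities, once one observes that the embedding $g\colon S^1\hookrightarrow\mathbb{C}$ is smooth and therefore both of bounded variation and $C^1$.
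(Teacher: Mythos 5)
Your proposal is correct and follows essentially the same route as the paper: decompose $g(x)=\exp(2\pi\imath x)$ into the components $\cos(2\pi x)$ and $\sin(2\pi x)$, identify the orbit average $c$ with $\int_{S^1}g\,d\mu$ for the unique invariant measure $\mu$, and apply Theorem \ref{classicalDK} component-wise for \eqref{nowe_rn1} and Theorem \ref{improved} for \eqref{nowe_rn2}. Your only addition is to make explicit the unique-ergodicity argument behind the identification $c=\int_{S^1}g\,d\mu$, which the paper merely asserts; this is a welcome clarification rather than a different method.
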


Note that in case of vanishing Birkhoff average ($c=0\in\mathbb{C}$), \eqref{nowe_rn1} asserts that there is a bounded neighbourhood $U$ of $0\in\mathbb{C}$ such that for every $x_0\in\mathbb{R}$ the points $z_{q_n}(x_0)\in \Gamma(x_0)$, corresponding to the closest-return times of $\varphi$,  return to  this neighbourhood: $z_{q_n}(x_0)\in U \ \ \textrm{for} \  n=0,1,... .$ Simultaneously, by \eqref{nowe_rn2}, for sufficiently large $n$ the points $z_{q_n}(x_0)$ of the curlicue $\Gamma(x_0)$  fail into arbitrarily small neighbourhood of $0$, and this convergence is uniform with respect to $x_0$, provided that $\varphi$ is smooth enough.

On the other hand, if the Birkhoff average does not vanish, then  the curlicue visits neighbourhoods of some points on the straight line in the direction of the non-zero vector $c=(c_1,c_2)$ in the complex plane.

\noindent{\textit{Proof of Proposition \ref{uwaga1}.}} If we consider
 \begin{displaymath}
 z_n(x_0): =\sum_{k=0}^{n-1}\exp(2\pi\imath \Phi^k(x_0))=\sum_{k=0}^{n-1}(\cos(2\pi \Phi^k(x_0))+\imath\sin(2\pi \Phi^k(x_0)))
 \end{displaymath}
  then $g:=g^{(1)}+g^{(2)}$ with $g^{(1)}(x):=\cos(2\pi x)$ and $g^{(2)}(x):=\sin(2\pi x)$ and we can apply the inequality (\ref{nierDK}), respectively to $g^{(1)}$ and $g^{(2)}$.  Note that $z_{q_n}(x_0)=\sum_{i=0}^{q_n-1}g(\varphi^i(x))$. Assume further that 
  \[ 
  \lim_{N\to\infty}\frac{1}{N}\sum_{k=0}^{N-1}\exp(2\pi\imath \Phi^k(x_0))=\int_{[0,1]}\exp(2\pi\imath x)\;d\hat{\mu}(x)=c\in\mathbb{C}, 
  \]
  where $\hat{\mu}$ is the measure $\mu$ lifted to $[0,1]$. Then
  \begin{displaymath}
  \int_{[0,1]}g^{(1)}(x)\;d\hat{\mu}(x)=c_1 \quad \textrm{and} \quad \int_{[0,1]}g^{(2)}(x)\;d\hat{\mu}(x)=c_2.
  \end{displaymath}
Now, as $\mathrm{Var}(g^{(1)})=\mathrm{Var}(g^{(2)})=4$ for $z_n(x_0)=a_n+\imath b_n$ we obtain that $\vert a_{q_n} - q_n c_1\vert\leq 4$ and $\vert b_{q_n}- q_n c_2\vert\leq 4$.   

Similarly, the second statement  follows from Theorem \ref{improved}.  \qed
\vskip 0.3cm

However, in case of vanishing Birkhoff average, the Denjoy-Koksma inequality does not explain in fact whether the curlicue is bounded or not. Nonetheless, this can be achieved by considering the so-called cohomological equation. Let us start from recalling the famous Gottschalk-Hedlund Theorem:
\begin{theorem}[cf. \cite{gh}]\label{twgh}
Let $X$ be a compact metric space and $T:X\to X$ a minimal homeomorphism. Given a continuous function $g:X\to\mathbb{R}$ there exists a continuous function $u:X\to\mathbb{R}$ such that
\begin{displaymath}
g=u-u\circ T
\end{displaymath}
if and only if there exists $K<\infty$ such that
\begin{equation}\label{warunekgh}
\sup_{n\in\mathbb{N}} \vert \sum_{k=0}^{n-1}g(T^k(x_0))\vert<K \ \textrm{for some} \ x_0\in X
\end{equation}
\end{theorem}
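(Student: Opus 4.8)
The plan is to prove the two implications separately: the forward direction by a one-line telescoping estimate, and the converse by a suspension (skew-product) argument on $X\times\mathbb{R}$ whose real content is a semicontinuity-to-continuity upgrade forced by minimality. For the easy direction, assume $g=u-u\circ T$ with $u$ continuous. Then the Birkhoff sums collapse, $\sum_{k=0}^{n-1}g(T^k x)=u(x)-u(T^n x)$, and since $u$ is continuous on the compact space $X$ the right-hand side is bounded by $2\sup_X|u|<\infty$, uniformly in $x$. In particular \eqref{warunekgh} holds, and there is no obstacle here.

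For the converse, suppose the sums $S_n(x_0):=\sum_{k=0}^{n-1}g(T^k x_0)$ satisfy $|S_n(x_0)|<K$. I would introduce the skew product $F:X\times\mathbb{R}\to X\times\mathbb{R}$, $F(x,t)=(Tx,\,t+g(x))$, a homeomorphism whose iterates are $F^n(x,t)=(T^n x,\,t+S_n(x))$. The hypothesis says precisely that the forward orbit of $(x_0,0)$ stays in the compact slab $X\times[-K,K]$. I then pass to the $\omega$-limit set $\Omega:=\omega_F(x_0,0)$, which is nonempty, compact and fully $F$-invariant and sits inside the slab. Using minimality of $T$, so that $\omega_T(x_0)=X$, every $y\in X$ is a limit $T^{n_k}x_0\to y$; the points $F^{n_k}(x_0,0)$ lie in the compact slab, so a subsequence converges to some $(y,s)\in\Omega$. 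Hence the projection $\pi(\Omega)$ equals $X$.

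Next I would extract a candidate solution from $\Omega$. Each fiber $\Omega_x=\{t:(x,t)\in\Omega\}$ is a nonempty compact subset of $[-K,K]$, so $\tau(x):=\max\Omega_x$ and $\sigma(x):=\min\Omega_x$ are defined on all of $X$. Full $F$-invariance forces both envelopes to obey the transfer equation $\tau\circ T=\tau+g$ and $\sigma\circ T=\sigma+g$: applying $F$ to $(x,\tau(x))\in\Omega$ gives $\tau(Tx)\ge\tau(x)+g(x)$, and applying $F^{-1}$ to $(Tx,\tau(Tx))\in\Omega$ gives the reverse inequality, and similarly for $\sigma$. The usual semicontinuity of the extrema of a compact-valued correspondence shows $\tau$ is upper semicontinuous and $\sigma$ lower semicontinuous. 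Setting $u:=-\sigma$ we already have $u-u\circ T=\sigma\circ T-\sigma=g$, so everything reduces to proving that $\sigma$ is in fact continuous.

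This last point is where I expect the real difficulty, since the natural extremal sections are only semicontinuous. I would resolve it through the \emph{width} $w:=\tau-\sigma\ge 0$, which is $T$-invariant (both envelopes satisfy the same equation) and upper semicontinuous (a sum of the u.s.c.\ functions $\tau$ and $-\sigma$). An upper semicontinuous $T$-invariant function on a minimal system is constant: it attains its maximum, the super-level set $\{w=\max w\}$ is closed, invariant and nonempty, hence equals $X$. Thus $w\equiv W$ for a constant $W\ge 0$, so $\sigma=\tau-W$ is upper semicontinuous as well; being both upper and lower semicontinuous, $\sigma$ is continuous, and $u=-\sigma$ is the desired continuous solution. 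The main obstacle is therefore exactly this promotion of a semicontinuous invariant section to a continuous one, and it is precisely the minimality of $T$, via the constant-width argument reconciling $\tau$ and $\sigma$, that makes it work; the telescoping identity and the skew-product reduction are routine by comparison.
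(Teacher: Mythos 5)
Your proof is correct. Note that the paper itself gives no proof of Theorem \ref{twgh}: it is quoted from Gottschalk--Hedlund \cite{gh} (and then used as a black box in Proposition \ref{main}), so the comparison is with the classical argument rather than with anything in the paper. The classical route also passes to the skew product $F(x,t)=(Tx,t+g(x))$ and the compact slab $X\times[-K,K]$, but then extracts (via Zorn's lemma) a \emph{minimal} subset $M$ of the orbit closure of $(x_0,0)$ and shows $M$ is the graph of a continuous function by the vertical-translation trick: $F$ commutes with $V_s(x,t)=(x,t+s)$, so if $M$ met some fiber twice, $V_s(M)=M$ for some $s\neq 0$, forcing $M$ to be unbounded in the fiber direction. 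Your route replaces the minimal-set extraction by working directly with the $\omega$-limit set and its extremal sections $\tau=\max$, $\sigma=\min$, and the real novelty relative to the classical proof is the constant-width argument: $w=\tau-\sigma$ is $T$-invariant and upper semicontinuous, hence constant by minimality, which upgrades the merely semicontinuous envelope $\sigma$ to a continuous one. What each approach buys: yours avoids Zorn's lemma and the translation trick entirely and is more hands-on; the classical graph argument gives as a by-product that the invariant set \emph{is} the graph $\{(x,u(x))\}$, which is exactly the invariant-section picture the paper exploits after Proposition \ref{main} (there the solution's graph carries the vertices $z_n=u(\varphi^n(x_0))$). Two small points you should make explicit but which are standard: full invariance $F(\Omega)=\Omega$ of the $\omega$-limit set (you use $\Omega\subseteq F(\Omega)$ when applying $F^{-1}$; this needs precompactness of the forward orbit and that $F$ is a homeomorphism), and $\omega_T(x_0)=X$ for minimal $T$ (the $\omega$-limit set is nonempty, closed and invariant). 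With those lines added, the argument is complete, and it proves the stated form verbatim, boundedness at a single $x_0$ sufficing.
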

Note that every two continuous solutions of the cohomological equation for a minimal homeomorphism $\varphi$ of the compact metric space $X$ differ by a constant (i.e. if $u$ is such a solution, then $\hat{u}:=u+c$, where $c$ is arbitrary constant, is also a solution). Moreover, by minimality of $\varphi$, from \eqref{warunekgh} follows that $\sup_{x\in X}\sup_{n\in\mathbb{N}} \vert \sum_{k=0}^{n-1}g(T^k(x))\vert<2K$ (cf. e.g. \cite{katok}).

In our setting, $X=S^1$. We assume till the end of this section that $\varphi:S^1\to S^1$ is minimal, that is,  $\varphi$ is conjugate to an irrational rotation $\mathcal{R}_{\varrho}$. Therefore $\varphi$ satisfies the assumptions of Theorem \ref{twgh}. We consider $S^1$ as the quotient space $S^1=\mathbb{R}/\mathbb{Z}$ (equivalently, as the interval $[0,1]$ with endpoints identified). Denote by $g:S^1\to\mathbb{C}$ the exponential function $g(x)=\exp(2\pi\imath x)$ and identify $\varphi:S^1 \to S^1$ with its lift $\Phi$ by $\varphi= \Phi \mod 1$. The following functional equation
\begin{equation}\label{ce}
\exp(2\pi\imath x)+u(x)=u(\varphi(x)), \quad \forall_{x\in\mathbb{R}},
\end{equation}
where $u:S^1\to\mathbb{C}$ is the unknown of the problem, will be referred to as the cohomological equation in our further considerations. Traditionally, the cohomological equation is given with respect to the functions $g$ and $u$ taking real values but in our case one can equivalently consider $u=(u_1,u_2)$ and $g=(g_1,g_2)$ with $u_i,g_i: S^1\to\mathbb{R}$, $i=1,2$.  The condition  on bounded Birkhoff sums now takes the form:
\begin{equation}\label{ogr}
\sup_{x\in X}\sup_{n\in\mathbb{N}} \vert \sum_{k=0}^{n-1}\exp(2\pi\imath \varphi^k(x))\vert<K.
\end{equation}
Notice that if  there exists a continuous solution $u$ of the equation (\ref{ce}), then by integrating both sides with respect to the invariant measure $\mu$ of $\varphi$ we obtain that $\int_{S^1}\exp(2\pi\imath x)\;d\mu=0$. In other words, vanishing of the Birkhoff averages $\lim_{N\to\infty}\frac{1}{N}\sum_{k=0}^{N-1}\exp(2\pi\imath \varphi^k(x)))=0$ is a necessary condition for the existence of a continuous solution of (\ref{ce}). 

Let us for a while consider cylinder maps (see e.g. \cite{atkinson}): If $\varphi:S^1\to S^1$ is a minimal homeomorphism, $g:S^1\to\mathbb{C}$ is a continuous function and $Y$ denotes the product space $Y=S^1\times \mathbb{C}$, then the transformation $F:Y\to Y$ given as
\begin{displaymath}
F(x,\xi):=(\varphi(x),\xi+g(x))
\end{displaymath}
is called a \emph{cylinder transformation}. In the current work we consider cylinder transformations of the following form: $F(x,\xi)=(\varphi(x),\xi+\exp(2\pi\imath x))$. Assume that $u(x)$ is the solution of the cohomological equation (\ref{ce}). In this case
\begin{displaymath}
F(x,u(x))=(\varphi(x),u(x)+\exp{2\pi\imath x})=(\varphi(x),u(\varphi(x))).
\end{displaymath}
It follows that $\mathcal{S}:=\{(x,y)\in Y: \ x\in S^1, y=u(x)\}$ is an invariant section of $F$, i.e. $F(\mathcal{S})\subseteq\mathcal{S}$ (compare with the proof of Gottschalk-Hedlund Theorem). We are ready to state
 
 \begin{proposition}\label{main}
Let $\varphi:S^1\to S^1$ be a minimal homeomorphism with a lift $\Phi:\mathbb{R}\to\mathbb{R}$. Then the curve $\Gamma$ generated by an arbitrary orbit $\{\Phi^n(x)\}$ is bounded and superficial if and only if the cohomological equation (\ref{ce}) has a continuous solution $u: S^1\to\mathbb{C}$.

 Moreover, if $\Gamma$, evaluated over the orbit of some point $x_0$,  is bounded then $\lim_{N\to\infty}\frac{1}{N}\sum_{k=0}^{N-1}\exp(2\pi\imath \Phi^k(x_0))=0$ and the points $z_n$ of $\Gamma$ lie on the curve $u(S^1)\subset \mathbb{C}$, where $u$ is a continuous solution of (\ref{ce}) satisfying $u(x_0)=0$.
 \end{proposition}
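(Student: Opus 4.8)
The plan is to exploit the fact that the vertices of $\Gamma$ are exactly the Birkhoff sums of $g(x)=\exp(2\pi\imath x)$ along the orbit of $\varphi$. Writing $z_n(x)=\sum_{k=0}^{n-1}g(\varphi^k(x))$ and recalling (as in the proof of Proposition \ref{sprz_obrot}, where consecutive vertices are joined by unit segments) that $\Gamma(x)$ is bounded if and only if $\sup_{n}\vert z_n(x)\vert<\infty$, the whole statement hinges on one telescoping identity. Reading \eqref{ce} as $u(\varphi(x))-u(x)=g(x)$, evaluating it at $x,\varphi(x),\dots,\varphi^{n-1}(x)$ and summing collapses the left-hand side, giving
\begin{equation}\label{telescope}
z_n(x)=u(\varphi^n(x))-u(x).
\end{equation}

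First I would establish the equivalence. For the reverse implication, suppose \eqref{ce} has a continuous solution $u$. Since $S^1$ is compact, $u$ is bounded, and \eqref{telescope} yields $\vert z_n(x)\vert\le 2\Vert u\Vert_{\infty}$ for every $x$ and every $n$; hence \emph{every} orbit produces a bounded curve. Minimality forces the rotation number to be irrational and makes every orbit visit infinitely many distinct points modulo $1$, so Proposition \ref{superficialforirrational} upgrades boundedness to superficiality. For the forward implication it suffices that $\Gamma(x)$ be bounded for a single $x$: then $\sup_n\vert z_n(x)\vert<\infty$ says the Birkhoff sums of $g$ are bounded along this orbit, and the Gottschalk--Hedlund Theorem (Theorem \ref{twgh}), applied to $-g$ to match its sign convention $-g=w-w\circ\varphi$, produces a continuous $w$ with $g=w\circ\varphi-w$, i.e. a continuous solution of \eqref{ce}. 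The remark following Theorem \ref{twgh} then promotes this to the uniform bound $\sup_x\sup_n\vert z_n(x)\vert<\infty$, consistent with the ``arbitrary orbit'' phrasing.

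For the ``moreover'' part, assume $\Gamma$ is bounded over the orbit of some $x_0$. Then $\sup_n\vert z_n\vert<\infty$, so $\tfrac1n z_n\to0$ and $\lim_{N}\tfrac1N\sum_{k=0}^{N-1}\exp(2\pi\imath\Phi^k(x_0))=0$; equivalently, integrating \eqref{ce} against the unique invariant measure $\mu$ gives $\int_{S^1}g\,d\mu=0$. Boundedness already supplies a continuous solution $u$ by the equivalence just proved, and since any two continuous solutions differ by a constant I may normalise so that $u(x_0)=0$. Then \eqref{telescope} becomes $z_n=u(\varphi^n(x_0))$, and as $\varphi^n(x_0)\in S^1$ this places every vertex $z_n$ on the curve $u(S^1)\subset\mathbb{C}$, as claimed.

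The computational core is trivial once \eqref{telescope} is spotted; the main care needed is the logical bookkeeping around ``arbitrary''. The equivalence is read uniformly in $x$ on the left but can be triggered by a single bounded orbit on the right, so minimality must be invoked twice: once through Proposition \ref{superficialforirrational} (for the irrational rotation number and the distinctness of the orbit points modulo $1$) and once through the Gottschalk--Hedlund remark to pass from one bounded orbit to a bound uniform in the initial point. I expect no genuine obstacle beyond matching the sign in Theorem \ref{twgh} and noting that the unit-length segments make boundedness of $\Gamma$ equivalent to boundedness of its vertices.
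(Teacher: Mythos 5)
Your proof is correct and follows essentially the same route as the paper: Gottschalk--Hedlund (with the componentwise/real-imaginary reading of Theorem \ref{twgh}, which the paper also notes) plus the boundedness-of-vertices criterion for the equivalence, and the identity $z_n=u(\varphi^n(x_0))-u(x_0)$ for the ``moreover'' part. Your direct telescoping of \eqref{ce} is just the paper's cylinder-transformation/invariant-section computation $F^n(x_0,u(x_0))=(\varphi^n(x_0),u(\varphi^n(x_0)))$ stripped of its formalism, and your appeal to Proposition \ref{superficialforirrational} is equivalent to the paper's citation of Proposition \ref{boundedsuperficial}, from which it is derived.
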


\begin{proof} The first statement follows directly from Theorem \ref{twgh} and Proposition \ref{boundedsuperficial}. The second part of the proposition can be concluded from the proof of Theorem \ref{twgh} but let us present the short reasoning below.

Notice that $F^n(x,\xi)=(\varphi^n(x),\xi+\sum_{k=0}^{n-1}\exp(2\pi\imath \varphi^k(x)))$, as $g(x)=\exp(2\pi\imath x)$ and $\varphi(x)=\Phi(x) \mod 1$ for $x\in S^1=\mathbb{R}/\mathbb{Z}$. Thus if we choose a point  $x_0$ and let $u$ denote the continuous solution of (\ref{ce}) such that $u(x_0)=0$, then by substituting $\xi=u(x_0)=0$ we get $F^n(x_0,0)=F^n(x_0,u(x_0))=(\varphi^n(x_0),\sum_{k=0}^{n-1}\exp(2\pi\imath \varphi^k(x_0)))$. Consequently, $\pi_2(F^n((x_0),u(x_0)))=\linebreak[10] \sum_{k=0}^{n-1}\exp(2\pi\imath \varphi^k(x_0))=z_n$, where $z_n$ is the $n-th$ vertex of the curlicue evaluated over the orbit $\{\varphi^i(x_0)\}$ and $\pi_2$ is the projection onto the second coordinate (onto the complex plane). But $F^n(x_0,u(x_0))=(\varphi^n(x_0),u(\varphi^n(x_0)))$ since we are on the invariant section $\mathcal{S}\subset S^1\times \mathbb{C}$. Thus $z_n\in\pi_2(\mathcal{S})$. Precisely, $z_n=u(\varphi^n(x_0))$ and, as $u:S^1\to\mathbb{C}$ is continuous,  $u(S^1)=\pi_2(\mathcal{S})$ is a bounded closed curve in the complex plane $\mathbb{C}$ with vertices $z_n$ lying on it.  \end{proof}

An interested Reader can find out more about the existence of induced continuous sections in more general setting for example in the work \cite{ponce}  which studies cocycles of isometries over minimal dynamics. 

 \begin{corollary}
Given an orientation preserving minimal circle homeomorphism $\varphi$ and denoting by  $\Gamma(x_0)$ a curve generated by the orbit of $x_0$, either $\Gamma(x_0)$ is bounded for all $x_0\in S^1$ or for every $x_0$ the curve $\Gamma(x_0)$ is unbounded. Moreover, in case $\Gamma(x_0)$ is bounded, its vertices lie on a closed curve $\tau$ ($\tau=u(S^1)$), whose shape does not depend on the choice of the generating point $x_0$. 
\end{corollary}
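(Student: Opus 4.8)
The plan is to read everything off Proposition \ref{main} together with the Gottschalk--Hedlund Theorem \ref{twgh} and the uniqueness-up-to-a-constant of continuous solutions of the cohomological equation recorded in the remark following that theorem.

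First I would establish the dichotomy. For any $x_0$ the curve $\Gamma(x_0)$ is bounded precisely when $\sup_{n}\vert z_n(x_0)\vert<\infty$, i.e. when the Birkhoff sums $\sum_{k=0}^{n-1}\exp(2\pi\imath\varphi^k(x_0))$ stay bounded. Applying Theorem \ref{twgh} to $X=S^1$ and $g(x)=\exp(2\pi\imath x)$ (coordinatewise, as the paper allows), boundedness of these sums for a single $x_0$ is equivalent to the existence of a continuous solution $u$ of (\ref{ce}); and since $\varphi$ is minimal, condition (\ref{warunekgh}) at one point upgrades to the uniform bound (\ref{ogr}), so the sums are then bounded for every $x\in S^1$. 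Hence if $\Gamma(x_0)$ is bounded for one $x_0$, a continuous solution exists and $\Gamma(y_0)$ is bounded for all $y_0$; contrapositively, if $\Gamma(x_0)$ is unbounded for one $x_0$, it is unbounded for all. This is exactly the claimed alternative (and in the bounded case $\Gamma(x_0)$ is moreover superficial by Proposition \ref{main}).

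Next I would pin down the shape in the bounded case. Fix $x_0$ and let $u_{x_0}$ be the continuous solution of (\ref{ce}) normalized by $u_{x_0}(x_0)=0$. By the second part of Proposition \ref{main} the vertices of $\Gamma(x_0)$ satisfy $z_n=u_{x_0}(\varphi^n(x_0))$, so they all lie on $\tau_{x_0}:=u_{x_0}(S^1)$. Now compare two generating points $x_0$ and $y_0$. Since any two continuous solutions of the cohomological equation for a minimal $\varphi$ differ by a constant, $u_{y_0}=u_{x_0}+c$ with $c\in\mathbb{C}$; evaluating at $y_0$ and using $u_{y_0}(y_0)=0$ forces $c=-u_{x_0}(y_0)$. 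Therefore
\[
\tau_{y_0}=u_{y_0}(S^1)=u_{x_0}(S^1)-u_{x_0}(y_0)=\tau_{x_0}-u_{x_0}(y_0),
\]
so $\tau_{y_0}$ is a translate of $\tau_{x_0}$. The two vertex sets thus lie on curves congruent by a translation, which is precisely the assertion that the shape of $\tau$ does not depend on the choice of $x_0$.

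The argument is essentially bookkeeping on top of Proposition \ref{main}, so there is no serious analytic obstacle. The only point requiring care — and what I expect to be the crux — is the clean formulation of ``shape does not depend on $x_0$'': one must interpret it as congruence-by-translation and justify it through the constant-difference property of solutions, since the normalization $u(x_0)=0$ (which anchors $z_0=0$ at the origin) is exactly what shifts the curve as $x_0$ varies. Everything else, namely the equivalence of boundedness with a continuous solution and the passage from one point to all points, is delivered directly by Theorem \ref{twgh} and minimality.
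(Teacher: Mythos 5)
Your proof is correct and follows exactly the route the paper intends: the corollary is stated as an immediate consequence of Proposition \ref{main} (via Theorem \ref{twgh}, with minimality upgrading boundedness at one point to all points) together with the remark that continuous solutions of the cohomological equation differ by a constant, which is precisely your translation argument $\tau_{y_0}=\tau_{x_0}-u_{x_0}(y_0)$. Your explicit normalization bookkeeping ($u_{x_0}(x_0)=0$ forcing $c=-u_{x_0}(y_0)$) is a faithful filling-in of what the paper leaves implicit.
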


We consider the following example of the minimal circle homeomorphism $\varphi$:

\begin{example}\label{Example 3}
Choose an irrational rotation number $\varrho\in\mathbb{R}\setminus\mathbb{Q}$ and let $h$ be as in Example \ref{Example 1}. Then $h$ induces a minimal circle homeomorphism $\varphi$ with $\varphi=\Phi \mod 1$ where
\begin{displaymath}
\Phi=h^{-1}\circ \mathcal{R}_{\varrho}\circ h
\end{displaymath}
and $\mathcal{R}_{\varrho}(x)=x+\varrho$. One checks that
\begin{displaymath}
\int_{0}^{1}\cos(2\pi h^{-1}(x))\;dx=\int_{0}^{1}\sin(2\pi h^{-1}(x))\;dx=0
\end{displaymath}
implying that the Birkhoff average $\lim_{n\to\infty}\frac{1}{n}\sum_{k=0}^{n-1}\exp(2\pi\imath \Phi^k(x_0))$ vanishes (for $\varrho\in\mathbb{R}\setminus\mathbb{Q}$). 

We numerically simulated two cases: $\varrho=\ln 2$ and $\varrho=\pi$ and for each of them obtained a bounded curve (suggesting that the corresponding  Birkhoff sums are bounded and the corresponding cohomological equations have continuous solutions). The results are presented in Figure \ref{fig:example}. Note also that these values of $\varrho$ are Diophantine (see Definition \ref{diophDefin}).
\begin{figure}[h!]

                \includegraphics[width=0.47\textwidth]{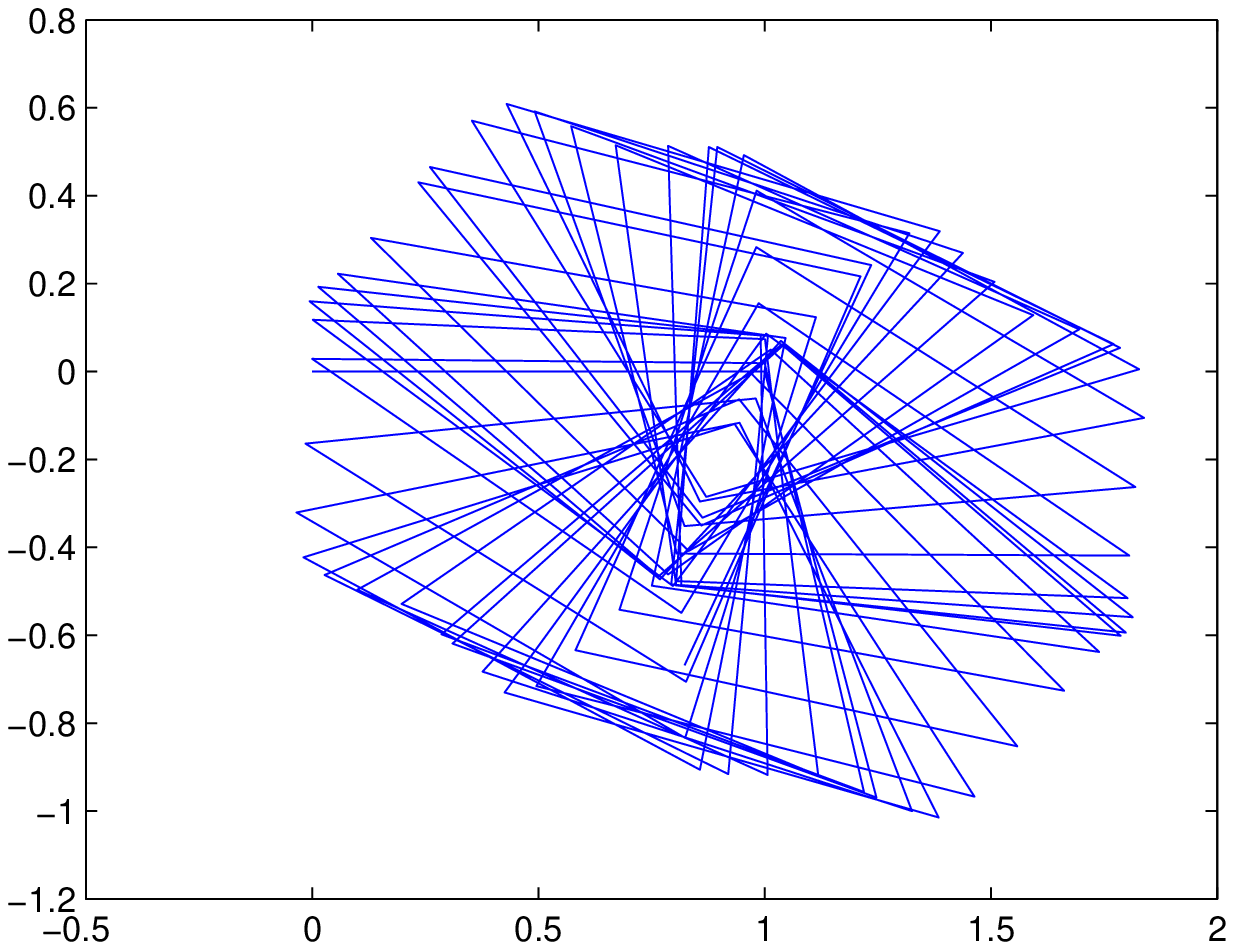}
                \includegraphics[width=0.47\textwidth]{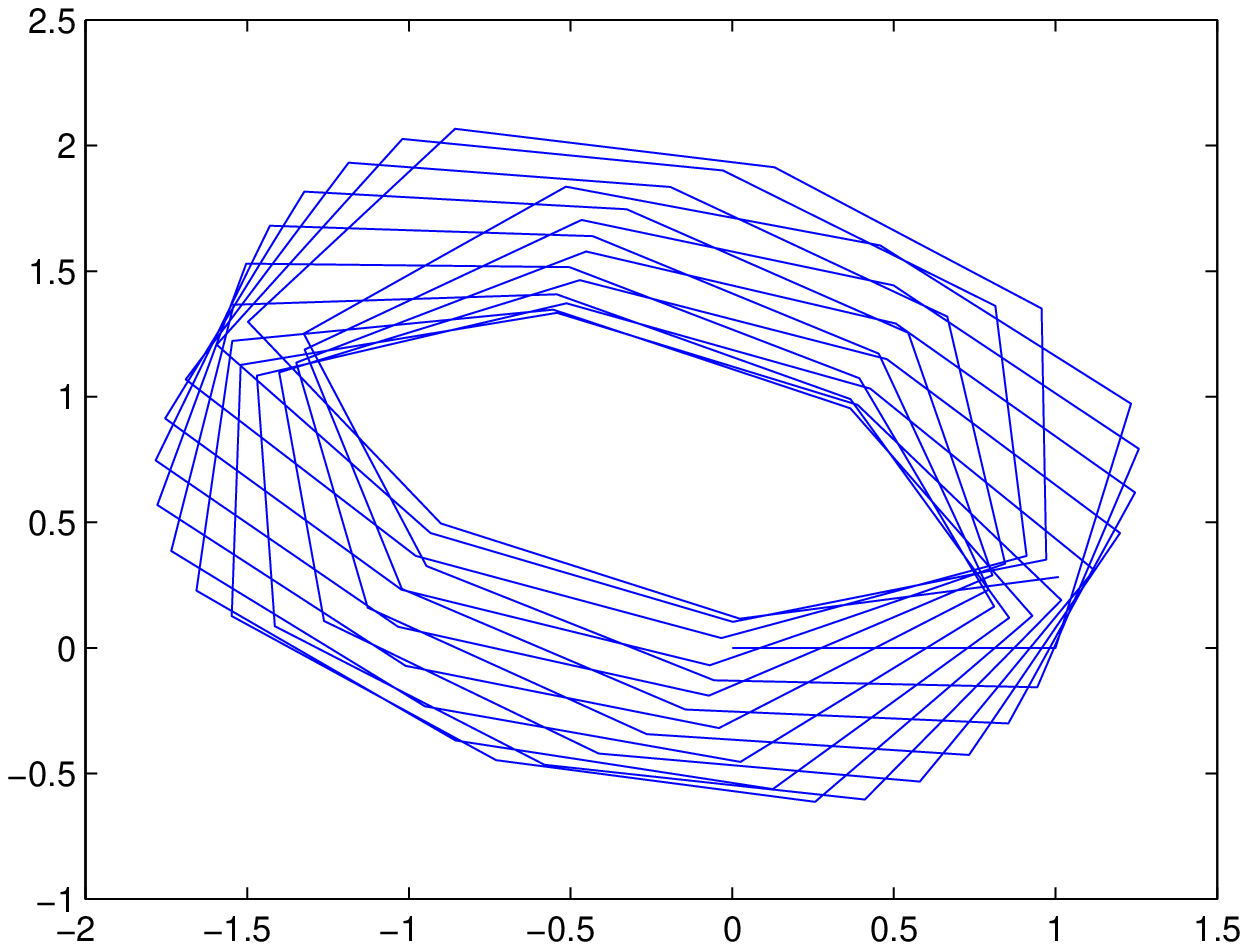}\\

                \includegraphics[width=0.47\textwidth]{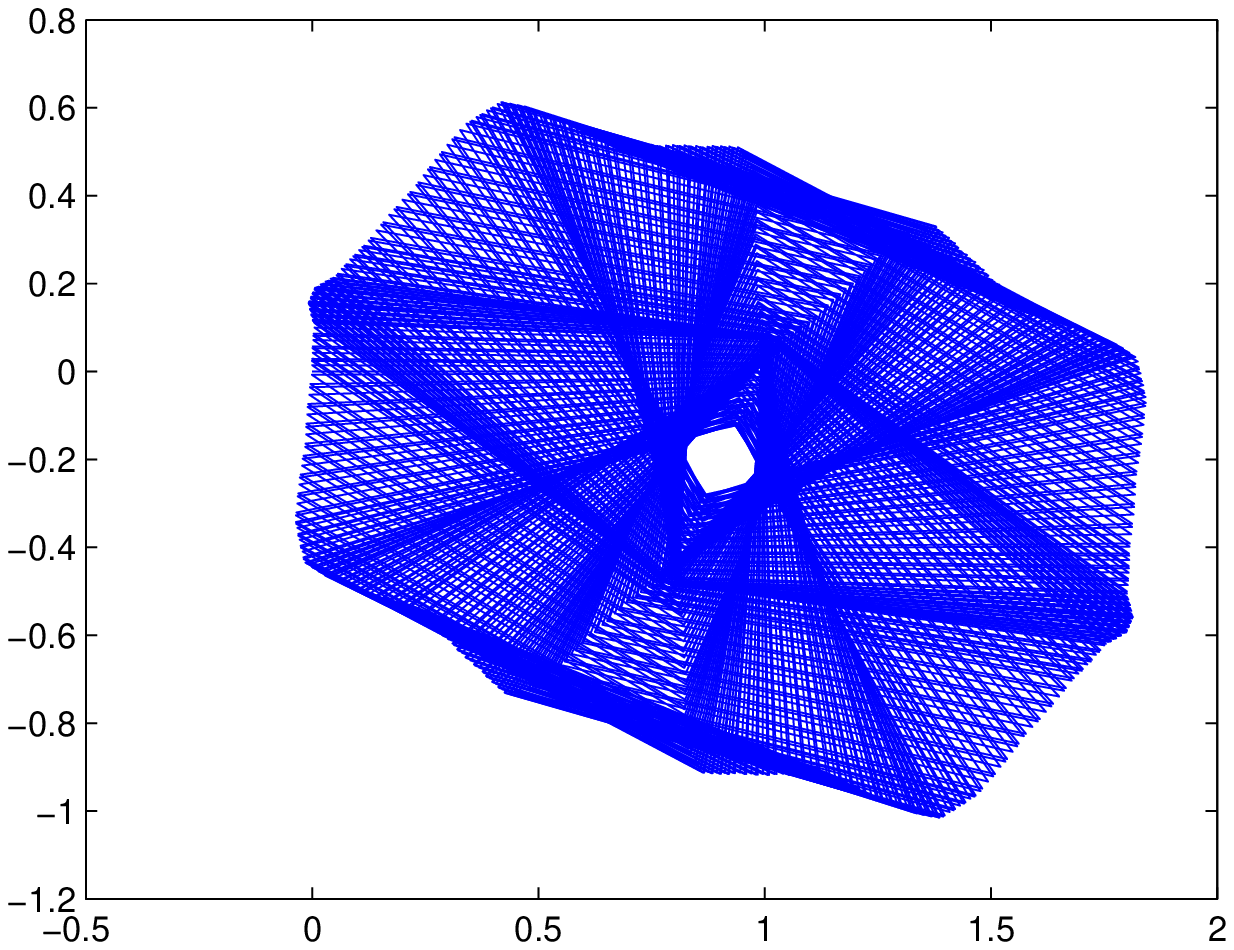}
                 \includegraphics[width=0.47\textwidth]{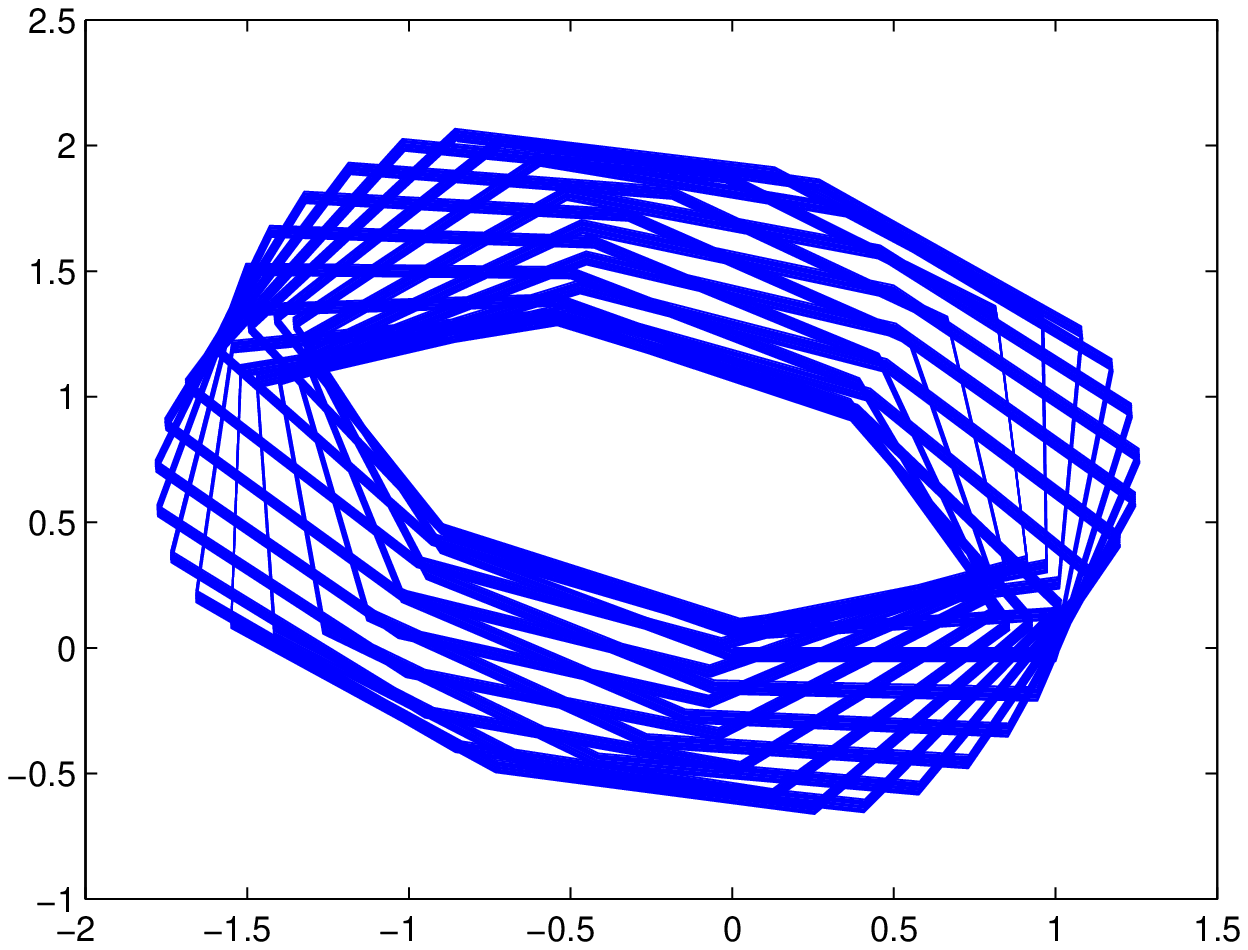}
                               
        \caption{$\Gamma_{n}(x_0)$ generated by $\Phi=h^{-1}\circ \mathcal{R}_{\varrho}\circ h \mod 1$ (see Example \ref{Example 3}) with $x_0=0$ and different values of $\varrho$ (left panel: $\varrho=\ln 2$, $n\in\{100,1000\}$; right panel: $\varrho=\pi$, $n\in\{100,1000\}$)}\label{fig:example}
\end{figure}
\end{example}

\section{Growth rate and superficiality for unbounded curlicues}\label{secGrow}
In this part we deal with curves $\Gamma$ generated by circle homeomorphisms $\varphi=\Phi \mod 1$ with irrational rotation number $\varrho\in\mathbb{R}\setminus\mathbb{Q}$. We already know that if $\Gamma$ is bounded then its shape can be described by a solution of the certain cohomological equation. Notwithstanding, in case $\Gamma$ is unbounded we might always ask how fast $\textrm{Diam}(\Gamma_t)$ increases to $\infty$ and whether $\Gamma$ is superficial.

\begin{definition}\label{diophDefin}
A real number $\varrho$ for which there exists $C>0$ and $r>1$ satisfying
\begin{displaymath}
\vert \varrho-\frac{p}{q}\vert\geq \frac{C}{q^{1+r}}
\end{displaymath}
for all $p/q\in\mathbb{Q}$ is called Diophantine of type $r$.
\end{definition}

We remark that for every fixed $r>1$ the set of real numbers of Diophantine type $r$ has full Lebesgue measure. Moreover, the intersection of sets of Diophantine numbers of type $r$ over all $r>1$ has full measure too.

\begin{definition}
A real number is of \emph{bounded type} if the continued fraction approximation $\frac{p_n}{q_n}$ has the property that $\frac{q_{n+1}}{q_n}$ is bounded.
\end{definition}

\begin{theorem}\label{TwierdzenieZbiorcze}
Assume that $\varrho\in\mathbb{R}\setminus\mathbb{Q}$.

Suppose that $\int_{0}^1\exp(2\pi\imath x)\; d\mu(x)=0$, where $\mu$ is the unique invariant Borel probability measure of $\varphi=\Phi \mod 1$. In this case:
\begin{enumerate}
\item If $\varrho$ is Diophantine of type $r>1$, then for every $x_0\in\mathbb{R}$ 
\begin{equation}\label{estim1}
\vert z_n(x_0)\vert=\vert\sum_{k=0}^{n-1}\exp{(2\pi\imath \Phi^n(x_0))}\vert=\mathcal{O}(n^{1-1/r}\log(n)).
\end{equation} \label{thm4_3id1}
\item  If $\varrho$ is of bounded type, then for every $x_0\in\mathbb{R}$ we have
\begin{equation}\label{estim2}
\vert z_n(x_0)\vert=\vert\sum_{k=0}^{n-1}\exp{(2\pi\imath \Phi^n(x_0))}\vert=\mathcal{O}(\log(n)).
\end{equation} \label{thm4_3id2}
\item If  $\varrho$ satisfies $a_m<(m+1)^{1+\varepsilon}$, with $\varepsilon>0$,  for any $m$ large enough, where $a_m$'s are the integers appearing in the continued fraction expansion of $\varrho$ ($\varrho=[0;a_0,a_1,a_2,...]$), then above estimates can be replaced with
\begin{equation}\label{estim3}
\vert z_n(x_0)\vert=\vert \sum_{k=0}^{n-1}\exp{(2\pi\imath x)}\vert=\mathcal{O}(\log^{2+\varepsilon}n).
\end{equation}\label{thm4_3id3}
\end{enumerate}
In any of the above cases \eqref{thm4_3id1}-\eqref{thm4_3id3} the curve $\Gamma$ generated by $\varphi$ is superficial.

On the other hand, if the Birkhoff average $\int_{[0,1]}\exp(2\pi\imath x)\;d\mu(x)=v$ does not vanish,  then $\Gamma$ is not superficial. However, if its rotation number $\varrho$ satisfies the hypothesis of any of \eqref{thm4_3id1}-\eqref{thm4_3id3} above,  then there exists a constant $M$ and a function $l(n)=o(n)$ (i.e. $\lim_{n\to\infty}\frac{l(n)}{n}=0$) such that for any $x_0\in\mathbb{R}$ and $n\in\mathbb{N}$ we have
 \begin{equation}\label{estim4}
 \vert z_n(x_0)-n v\vert\leq M l(n).
 \end{equation}
 \end{theorem}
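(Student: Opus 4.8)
The plan is to reduce everything to the Denjoy--Koksma inequality (Theorem \ref{classicalDK}) by decomposing each Birkhoff sum $z_n(x_0)=\sum_{k=0}^{n-1}g(\varphi^k(x_0))$, with $g(x)=\exp(2\pi\imath x)$, into blocks whose lengths are the closest-return denominators $q_k$. Concretely, I would use the Ostrowski (greedy) representation: given $n$, let $N$ be defined by $q_N\le n<q_{N+1}$, write $n=b_Nq_N+r_N$ with $b_N=\lfloor n/q_N\rfloor$ and $0\le r_N<q_N$, and iterate on $r_N$. This produces digits with $0\le b_k\le a_{k+1}$ and $n=\sum_{k=0}^N b_kq_k$, and it partitions the orbit segment $\{x_0,\varphi(x_0),\dots,\varphi^{n-1}(x_0)\}$ into $\sum_k b_k$ consecutive blocks, each of length some $q_k$ and of the form $\{\varphi^s(x_0),\dots,\varphi^{s+q_k-1}(x_0)\}$. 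Since $\int_{S^1}g\,d\mu$ vanishes, Theorem \ref{classicalDK} applied to $g^{(1)}=\cos(2\pi\cdot)$ and $g^{(2)}=\sin(2\pi\cdot)$ (as in the proof of Proposition \ref{uwaga1}, where $\mathrm{Var}(g^{(1)})=\mathrm{Var}(g^{(2)})=4$) bounds every such block by a single constant $C$, uniformly in the starting point. Summing the blocks yields the master estimate
\[
|z_n(x_0)|\le C\sum_{k=0}^N b_k,\qquad\text{uniformly in }x_0.
\]

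The heart of the argument is to bound $\sum_k b_k$ under each arithmetic hypothesis. I would first record two elementary per-digit bounds valid for every $k$: $b_kq_k\le n$ gives $b_k\le n/q_k$, while the Ostrowski constraint gives $b_k\le a_{k+1}$. For the Diophantine case (\ref{thm4_3id1}) I would translate the type-$r$ condition into $q_{k+1}\le C^{-1}q_k^{\,r}$, hence $a_{k+1}\le C^{-1}q_k^{\,r-1}$, via the two-sided estimate $\frac{1}{q_k(q_{k+1}+q_k)}\le|\varrho-p_k/q_k|\le\frac{1}{q_kq_{k+1}}$. Combining the two bounds,
\[
b_k\le\min\Big(\tfrac{n}{q_k},\,C^{-1}q_k^{\,r-1}\Big)\le C'\,n^{1-1/r},
\]
the worst case occurring at $q_k\approx n^{1/r}$. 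Since $q_k\ge(\sqrt2)^{\,k}$ forces $N=\mathcal{O}(\log n)$, summing over the $N+1$ levels gives $|z_n(x_0)|=\mathcal{O}(n^{1-1/r}\log n)$. The bounded-type case (\ref{thm4_3id2}) is the same computation with $a_{k+1}$ bounded, so $\sum_k b_k\le(\max_k a_{k+1})(N+1)=\mathcal{O}(\log n)$; and case (\ref{thm4_3id3}) follows from $b_k\le a_{k+1}<(k+2)^{1+\varepsilon}$ together with $\sum_{k=0}^N(k+2)^{1+\varepsilon}=\mathcal{O}(N^{2+\varepsilon})=\mathcal{O}(\log^{2+\varepsilon}n)$.

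For superficiality I would note that in each of (\ref{thm4_3id1})--(\ref{thm4_3id3}) the bound on $|z_n|$ is $o(n)$. Since the $n$-th initial segment of $\Gamma$ has length $n$ and $\mathrm{Diam}\,\Gamma_n\le 2\max_{k\le n}|z_k|$ (all points lie in the convex hull of the vertices), sublinearity of $\max_{k\le n}|z_k|$ gives $\mathrm{Diam}\,\Gamma_t=o(t)$, i.e. $t/\mathrm{Diam}\,\Gamma_t\to\infty$, which is superficiality in the sense of Definition \ref{defsuper} (non-integer $t$ are handled by monotonicity). For the non-vanishing case $v=\int_{[0,1]}g\,d\mu\ne0$ I would rerun the identical block decomposition, now observing that Denjoy--Koksma bounds each length-$q_k$ block $B$ by $|B-q_kv|\le C$; summing and using $n=\sum_kb_kq_k$ gives $|z_n(x_0)-nv|\le C\sum_k b_k=M\,l(n)$, where $l(n)$ is precisely the sublinear quantity estimated above under hypotheses (\ref{thm4_3id1})--(\ref{thm4_3id3}), proving \eqref{estim4}. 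Non-superficiality then follows because $|z_n-nv|=o(n)$ forces $|z_n|\ge\tfrac{|v|}{2}n$ for large $n$, whence $\mathrm{Diam}\,\Gamma_n\ge|z_n|\ge\tfrac{|v|}{2}n$ and $t/\mathrm{Diam}\,\Gamma_t$ stays bounded.

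The step I expect to be the main obstacle is making the block decomposition and its uniformity fully rigorous: one must verify that the Ostrowski digits genuinely partition the orbit into closest-return blocks to which Theorem \ref{classicalDK} applies with a starting-point-independent constant, and in the Diophantine case one must check that the per-digit bound $b_k\le\min(n/q_k,\,a_{k+1})$ holds simultaneously for all $k$, so that the optimization producing the exponent $1-1/r$ is legitimate rather than a coincidence of the top digit alone. The remaining pieces---the level count $N=\mathcal{O}(\log n)$ and the passage from vertex estimates to $\mathrm{Diam}\,\Gamma_t$---are routine.
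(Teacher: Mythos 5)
Your core estimates take essentially the same route as the paper's proof: Denjoy--Koksma combined with the $\varrho$-Ostrowski expansion. The paper spells this out only for the bounded-type case \eqref{thm4_3id2} (bounding the Ostrowski digits by $q_{k+1}/q_k$), and for \eqref{thm4_3id1} and \eqref{thm4_3id3} defers to Jitomirskaya, Knill--Lesieutre and Guillotin, noting the rotation arguments transfer to homeomorphisms via Theorem \ref{classicalDK}; you instead carry out the digit bookkeeping directly, which is fine. Your per-digit bounds are correct and do hold simultaneously at every level: $b_k\le n/q_k$ from greediness, $a_{k+1}\le q_{k+1}/q_k\le C^{-1}q_k^{r-1}$ from the type-$r$ condition together with $\vert\varrho-p_k/q_k\vert\le 1/(q_kq_{k+1})$, the optimization at $q_k\approx (Cn)^{1/r}$, and $N=\mathcal{O}(\log n)$ from $q_{k+2}\ge 2q_k$. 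The uniformity you flag as the main obstacle is in fact automatic: inequality \eqref{nierDK} holds for \emph{every} $x\in S^1$, so each length-$q_k$ block is bounded by $\mathrm{Var}(g^{(1)})+\mathrm{Var}(g^{(2)})=8$ regardless of its starting point, and the decomposition $n=\sum_k b_kq_k$ is purely arithmetic.

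There are, however, two genuine gaps in your geometric conclusions. (a) Your superficiality argument establishes $t/\mathrm{Diam}\,\Gamma_t\to\infty$, which by Definition \ref{defsuper} is superficiality only for \emph{unbounded} curves; under the hypotheses the curve may be bounded (e.g.\ a rotation by a bounded-type irrational has vanishing average and bounded $\Gamma$), and then superficiality means $\lim_{\varepsilon\to0}\mathrm{Area}(\Gamma^{\varepsilon})/\varepsilon=\infty$, which must instead be drawn from Proposition \ref{boundedsuperficial} (infinitely many $u_n$ differ modulo $1$, as $\varphi$ has no periodic points) --- this is exactly how the paper splits the two cases. (b) The theorem asserts non-superficiality for \emph{every} irrational $\varrho$ with $v\neq 0$, but you deduce it from \eqref{estim4}, which your argument only provides under the arithmetic hypotheses \eqref{thm4_3id1}--\eqref{thm4_3id3}. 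The unconditional statement follows from Denjoy--Koksma along closest returns alone, i.e.\ Proposition \ref{uwaga1}: $\vert z_{q_n}(x_0)-q_nv\vert\le M$ for all $n$, hence $\mathrm{Diam}\,\Gamma_{q_n}\ge q_n\vert v\vert-M$ and $q_n/\mathrm{Diam}\,\Gamma_{q_n}$ remains bounded, violating superficiality along a subsequence with no Diophantine assumption on $\varrho$; this is the paper's argument and the correct repair of your step.
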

 We remark that the set of irrationals whose partial quotients satisfy the assumptions of \eqref{thm4_3id3} is of full measure.  
 
\begin{proof} The above claims can be concluded from Denjoy-Koksma inequality (\ref{nierDK}) and the assumed additional properties of the rotation number. Alternatively, we refer the Reader to \cite{jetomir} and \cite{knill}, where a similar fact as in \eqref{thm4_3id1} above is shown for the irrational rotation $\varphi(x)=x+ \varrho \mod 1$ and for arbitrary function $g(x)$ with bounded variation $\textrm{Var}(g)$ and such that $\int_0^{1}g(x) dx=0$ (here $g(x)=\exp{(2\pi\imath x)}$ and $M^{\prime}:=\textrm{Var}(\exp{(2\pi\imath x)})=2\pi$). Proofs therein remain valid for any homeomorphism with irrational rotation number (of Diophantine type $r$) provided that the condition $\int_0^{1}g(x) dx=0$ is replaced with $\int_0^{1}g(x) d\mu(x)=0$ since the Denjoy-Koksma inequality holds for an arbitrary such homeomorphism. In fact, concerning \eqref{estim1}, one can prove that there exists a constant $M$ such that for every $x_0\in\mathbb{R}$ and $n\in\mathbb{N}$ we have
\begin{equation}\label{estim1_a}
\vert z_n(x_0)\vert\leq M n^{1-1/r}\log(n).
\end{equation}

In order to prove \eqref{thm4_3id2} it suffices to show that there exists a constant $M$ such that for every $x_0\in\mathbb{R}$ and $n\in\mathbb{N}$ we have
\begin{equation}\label{estim2_a}
\vert z_n(x_0)\vert=\vert\sum_{k=0}^{n-1}\exp{(2\pi\imath \Phi^n(x_0))}\vert\leq M\log(n).
\end{equation}
Notice that (\ref{nierDK}) and $\varrho$-Ostrowski expansion (see \cite{ostrowski}) imply that for arbitrary $n$ there exist $m=m(n)\leq\frac{2\log n}{\log 2}$ and a sequence of integers $c_0, c_1, ..., c_m$ such that
\begin{displaymath}
\vert z_n(x_0)\vert=\vert \sum_{k=0}^{n}\exp{(2\pi\imath \Phi^k(x_0))}\vert\leq (c_0+\ldots c_m)M^{\prime}\leq (\sum_{k=0}^{m}\frac{q_{k+1}}{q_k})M^{\prime},
\end{displaymath}
Now, since $\varrho$ is of bounded type, there exists a constant $C$ such that $q_{i+1}/q_{i}<C$, $i=0,1,...,m$ and thus we have $\vert z_n(x_0)\vert< m M^{\prime}C$ which gives the desired estimate. 

The statement \eqref{thm4_3id3} is a counterpart of the corresponding Proposition 2.3 in \cite{guillotin} on irrational rotations, which can be readily extended to minimal homeomorphisms. Therefore completing the proof for vanishing Birkhoff average only amounts to justifying superficiality.  To this end, note that if $\Gamma$ is bounded then we immediately obtain that it is superficial, since infinitely many ${u_n}$'s are different modulo $1$. Suppose now that $\Gamma$ is unbounded and that \eqref{thm4_3id1}, \eqref{thm4_3id2} or \eqref{thm4_3id3} holds.  Then there exist a constant $M$ and a function $l(\cdot)$ such that $l(n)=o(n)$  and 
 \begin{displaymath}
 \lim_{n\to\infty}\frac{n}{\textrm{Diam}\Gamma_n}\geq \lim_{n\to\infty}\frac{n}{2\vert z_n(x_0)\vert}\geq \lim_{n\to\infty}\frac{n}{M l(n)}=\infty,
 \end{displaymath}
where $\Gamma$ is generated by the orbit of $x_0 \in \mathbb{R}$. It follows that $\Gamma$ is superficial.

It is also clear that the estimate \eqref{estim4} concerning non-vanishing Birkhoff average can be obtained in a similar manner.  In order to prove non-superficiality, let us take the increasing sequence $q_n\to\infty$ of the closest returns. Again for some constant $M$ we have
\begin{displaymath}
q_n\vert v\vert - M\leq \vert z_{q_n}(x_0)\vert\leq M+q_n\vert v\vert
\end{displaymath} and
\begin{displaymath}
\lim_{n\to\infty}\frac{q_n}{\textrm{Diam}\Gamma_{q_n}}\leq \lim_{n\to\infty}\frac{q_n}{\vert z_{q_n}(x_0)\vert}\leq \lim_{n\to\infty}\frac{q_n}{q_n\vert v\vert - M}=\frac{1}{\vert v\vert},
\end{displaymath}
which violates the condition that $\lim_{t\to\infty}\frac{t}{\textrm{Diam}\Gamma_t}=\infty$.\end{proof}

 Let us recall that if $\varrho \in \mathbb{R}\setminus\mathbb{Q}$ and $\Gamma$ is bounded, then it is always superficial. Thus the Theorem \ref{TwierdzenieZbiorcze} addresses mainly (non-)superficiality and growth rate for unbounded curves.  
 
 \section{Local discrete radius of curvature}\label{secLoca}
The last geometric feature we are going to study is the local discrete radius of curvature, which after Sinai (\cite{curl}) we define as:
\begin{definition}
Let $z_n:=\sum_{k=0}^{n-1}\exp(2\pi\imath u_k)$. The radius of the circle which goes through the consecutive points $z_{n-1}$, $z_n$ and $z_{n+1}$ on the curve $\Gamma(u)$ is called \emph{local discrete radius of curvature} and denoted $r_n$.
\end{definition}

Direct calculations justify the following (compare \cite{curl}):
\begin{proposition}
If $\Gamma$ is generated by the sequence $u=(u_n)$, then
\begin{displaymath}
r_n=\frac{1}{2}\vert\mathrm{cosec}(\frac{\eta_n}{2})\vert,
\end{displaymath}
where $\eta_n$ is the displacement between the elements $u_{n-1}$ and $u_n$:
\begin{displaymath}
\eta_n:=2\pi (u_n-u_{n-1}).
\end{displaymath}
\end{proposition}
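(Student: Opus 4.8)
The plan is to compute the radius of the circle through three consecutive vertices $z_{n-1}$, $z_n$, $z_{n+1}$ directly from the circumradius formula. Recall that for three points in the plane forming a triangle with side lengths $a$, $b$, $c$ and area $A$, the circumradius is $R = \frac{abc}{4A}$. Here two of the sides are the curlicue steps themselves: by Definition \ref{defin0}, the vector from $z_{n-1}$ to $z_n$ is $\exp(2\pi\imath u_{n-1})$ and the vector from $z_n$ to $z_{n+1}$ is $\exp(2\pi\imath u_n)$, so both have length exactly $1$. Thus two of the three side lengths are $a=b=1$, and the only genuinely geometric quantity to determine is the angle between these two unit steps, which in turn controls both the third side $c$ and the area $A$.

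First I would set $\eta_n := 2\pi(u_n - u_{n-1})$ as the turning between the two consecutive directions. The two step vectors are unit vectors whose arguments differ by $\eta_n$, so the (exterior) angle between the segments $z_{n-1}z_n$ and $z_nz_{n+1}$ is $\eta_n$. The third side of the triangle, the chord from $z_{n-1}$ to $z_{n+1}$, has squared length
\[
c^2 = |{\exp(2\pi\imath u_{n-1}) + \exp(2\pi\imath u_n)}|^2 = 2 + 2\cos\eta_n = 4\cos^2(\eta_n/2),
\]
so $c = 2|\cos(\eta_n/2)|$. Next I would compute the area $A$ of the triangle spanned by the two unit step vectors as one half the magnitude of their cross product, $A = \tfrac12|\sin\eta_n| = |\sin(\eta_n/2)\cos(\eta_n/2)|$, using the double-angle identity $\sin\eta_n = 2\sin(\eta_n/2)\cos(\eta_n/2)$.

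Substituting into $R = \frac{abc}{4A}$ with $a=b=1$ then gives
\[
r_n = \frac{1\cdot 1\cdot 2|\cos(\eta_n/2)|}{4\,|\sin(\eta_n/2)\cos(\eta_n/2)|} = \frac{1}{2}\left|\frac{1}{\sin(\eta_n/2)}\right| = \frac{1}{2}\,\bigl|\mathrm{cosec}(\eta_n/2)\bigr|,
\]
where the factors of $|\cos(\eta_n/2)|$ cancel. This is exactly the claimed formula. I expect no serious obstacle here, since the computation is elementary; the only points deserving care are the degenerate cases. When $\eta_n \equiv 0 \pmod{2\pi}$ the two steps are collinear, the three points are collinear, and the radius is formally infinite, which is consistent with $\mathrm{cosec}(\eta_n/2)$ blowing up. One should also note that the cancellation of $|\cos(\eta_n/2)|$ presumes $\cos(\eta_n/2)\neq 0$; the case $\eta_n \equiv \pi \pmod{2\pi}$ (the walk reversing direction) makes both $c$ and $A$ vanish and must be treated as a separate degenerate situation, but in all non-degenerate configurations the formula holds verbatim.
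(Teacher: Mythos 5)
Your computation is correct and is exactly the ``direct calculation'' the paper invokes (it gives no written proof, only a pointer to Sinai): two unit steps meeting at turning angle $\eta_n$, chord $c=2\vert\cos(\eta_n/2)\vert$, area $\tfrac12\vert\sin\eta_n\vert$, and the circumradius formula $R=\frac{abc}{4A}$ yield $r_n=\tfrac12\vert\mathrm{cosec}(\eta_n/2)\vert$, with the degenerate cases $\eta_n\equiv 0,\pi \pmod{2\pi}$ correctly flagged (in the latter the formula in fact still returns the sensible value $\tfrac12$, the circle having the unit step as diameter). Nothing is missing.
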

In particular, if $u_n=\Phi^n(x_0)$ for some $\Phi:\mathbb{R}\to\mathbb{R}$, then
\begin{displaymath}
\eta_n:=2\pi\Psi(\Phi^{n-1}(x_0)),
\end{displaymath}
where $\Psi:=\Phi-\textrm{Id}$ is the displacement function of $\Phi$. Thus when $\Gamma$ is generated by an orientation preserving homeomorphism $\varphi:S^1\to S^1$, the sequence $\{r_n\}$ has exactly the same properties as the displacement sequence of an orientation preserving circle homeomorphism, studied in \cite{wmjs1} and \cite{wmjs2}. In particular we conclude 

\begin{theorem}\label{wlasnosci1} Let $r_n$ be the local discrete radius of curvature of the curve $\Gamma$ generated by the orbit of $x_0$ under the lift $\Phi$ of an orientation preserving circle homeomorphism $\varphi$ with the rotation number $\varrho$. Then

\begin{enumerate}
  \item If $\varphi$ is a rotation $\mathcal{R}_{\varrho}$ by $2\pi \varrho$, then the sequence $r_n$ is constant: $r_n=\frac{1}{2}\vert\mathrm{cosec}(\pi \varrho)\vert$.
  \item If $\varphi$ is conjugated to the rational rotation by $2\pi\varrho$, where $\varrho=\frac{p}{q}$,  then the sequence $r_n$ is $q$-periodic.
  \item For a semi-periodic circle homeomorphism $\varphi$, the sequence
$r_n$ is asymptotically periodic. Precisely, if $\varrho(\varphi)=p/q$ then:
\begin{equation}\label{semiokresowosc}
\forall_{\varepsilon>0}\ \exists_{N\in\mathbb{N}} \ \forall_{n>N}\ \forall_{k\in\mathbb{N}}\ \quad |r_{n+kq}(z)-r_{n}(z)|<\varepsilon
\end{equation}
  \item If $\varrho\in \mathbb{R}\setminus \mathbb{Q}$ and $\varphi$ is minimal  then  the sequence $\{r_n\}$ is \emph{almost strongly recurrent}, i.e.
  \begin{displaymath}
\forall_{\varepsilon>0} \ \exists_{N\in\mathbb{N}} \ \forall_{n\in \mathbb{N}} \ \forall_{k\in\mathbb{N} \cup \{0\}} \ \exists_{i\in\{0,1,..., N\}} \ |r_{n+k+i}-r_{n}|<\varepsilon
\end{displaymath}
and $\{r_n\}$ is dense in the set
\begin{displaymath}
[\min_{x\in [0,1]} g(\Psi(x)), \max_{x\in [0,1]} g(\Psi(x))]=[\min_{x\in [0,1]} g(\Omega(x)), \max_{x\in [0,1]} g(\Omega(x))],
\end{displaymath}
where $g(x):=\frac{1}{2}\vert\mathrm{cosec}(\pi x)\vert$ and $\Omega(x):=G^{-1}(x+\varrho)-G^{-1}(x)$, with $G$ being the lift of a homeomorphism $\gamma$ conjugating $\varphi$ with the corresponding rotation. 
\end{enumerate}
\end{theorem}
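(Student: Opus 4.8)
The plan is to reduce the whole theorem to the behaviour of the \emph{displacement sequence} $d_n:=\Psi(\Phi^{n-1}(x_0))$, where $\Psi:=\Phi-\textrm{Id}$, and then transport its properties through one fixed continuous function. From the formula preceding the theorem, $\eta_n=2\pi\Psi(\Phi^{n-1}(x_0))$, so that $r_n=\frac{1}{2}\vert\mathrm{cosec}(\pi\Psi(\Phi^{n-1}(x_0)))\vert=g(d_n)$ with $g(x)=\frac{1}{2}\vert\mathrm{cosec}(\pi x)\vert$ as in part (4). The first thing to settle is that $g\circ\Psi$ is genuinely continuous, indeed uniformly continuous, on $S^1$: if $\Psi(x)\in\mathbb{Z}$ then $\varphi$ would fix $\mathfrak{p}(x)$, but a homeomorphism with non-integer rotation number has no fixed points, so $\Psi([0,1])$ is a compact interval lying strictly inside a single interval $(m,m+1)$ on which $\mathrm{cosec}(\pi\,\cdot\,)$ is bounded and continuous. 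With this in hand, parts (1) and (2) are immediate: for a rotation $\Psi\equiv\varrho$, hence $r_n\equiv g(\varrho)=\frac{1}{2}\vert\mathrm{cosec}(\pi\varrho)\vert$; and if $\varphi$ is conjugate to $\mathcal{R}_{p/q}$ then $\Phi^q(x)=x+p$ for every $x$, so by $1$-periodicity of $\Psi$ we get $d_{n+q}=\Psi(\Phi^{n-1}(x_0)+p)=d_n$ and thus $r_{n+q}=r_n$.

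For part (3), I would use that for a semi-periodic $\varphi$ every non-periodic orbit is attracted by a periodic one (see \cite{katok}). Writing $n-1=jq+s$ with $0\le s<q$, the map $x\mapsto\Phi^q(x)-p$ has fixed points and its iterates converge monotonically, so $\Phi^{jq}(x_0)-jp$ tends to a limit $\bar{x}$ as $j\to\infty$; applying the continuous $\Phi^s$ and using commutation with integer translations, $\Phi^{n-1}(x_0)\bmod 1$ converges, along each residue class $s$, to a limit depending only on $s$. Consequently, for large $n$ the points $\Phi^{n-1}(x_0)$ and $\Phi^{n+kq-1}(x_0)$, sharing the residue $s$, are arbitrarily close modulo $1$ uniformly in $k$, and the uniform continuity of $g\circ\Psi$ yields \eqref{semiokresowosc}.

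The substance is part (4). Since a minimal $\varphi$ is conjugate to $\mathcal{R}_\varrho$ via a lift $G$, one has $G(\Phi^{n-1}(x_0))=G(x_0)+(n-1)\varrho\bmod 1$, carrying the orbit onto a rotation orbit. For almost strong recurrence I would exploit that the return times $\{j:\Vert j\varrho\Vert<\delta\}$ of an irrational rotation are syndetic, where $\Vert\cdot\Vert$ is distance to the nearest integer: their gaps are bounded by some $N=N(\delta)$. Hence for any $n$ and any $k\ge 0$ there is $i\in\{0,\dots,N\}$ with $\Vert(k+i)\varrho\Vert<\delta$, which puts $G(\Phi^{n+k+i-1}(x_0))$ within $\delta$ of $G(\Phi^{n-1}(x_0))$; uniform continuity of $G^{-1}$ and of $g\circ\Psi$ then gives $\vert r_{n+k+i}-r_n\vert<\varepsilon$ for $\delta$ small. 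For density, minimality makes $\{\Phi^{n-1}(x_0)\bmod 1\}$ dense in $[0,1]$, and since $g\circ\Psi$ is continuous on the connected compact $[0,1]$ its image is exactly $[\min_x g(\Psi(x)),\max_x g(\Psi(x))]$, in which the image of a dense set is dense. Finally, the conjugacy $\Phi=G^{-1}\circ\mathcal{R}_\varrho\circ G$ gives $\Psi=\Omega\circ G$ with $\Omega(x)=G^{-1}(x+\varrho)-G^{-1}(x)$, and as $G$ is a bijection of $[0,1]$ modulo $1$ the ranges of $\Psi$ and $\Omega$ coincide, whence the two interval descriptions agree.

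I expect the main obstacle to be part (4), and within it the almost strong recurrence, since the standard rotation facts (syndeticity of return times) must be carried through two compositions, the conjugating homeomorphism $G^{-1}$ and the singular function $g$, while keeping every estimate uniform in the base point and in $k$; this is exactly where the displacement-sequence results of \cite{wmjs1,wmjs2} can be invoked once the transfer is justified. The small but essential technical point that legitimises the whole transfer is the no-fixed-point observation guaranteeing that $g\circ\Psi$ never meets the poles of $\mathrm{cosec}$, and the second delicate spot is part (3), where the convergence of the orbit must be controlled uniformly across an entire residue class.
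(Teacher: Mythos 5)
Your proposal is correct, and it hinges on exactly the same pivot as the paper: the identity $r_n=g(\Psi(\Phi^{n-1}(x_0)))$ with $g(x)=\frac{1}{2}\vert\mathrm{cosec}(\pi x)\vert$, i.e.\ $r_n$ is a fixed continuous function of the displacement sequence. The difference is what happens after that reduction. The paper stops there and imports all four properties wholesale from the displacement-sequence results of \cite{wmjs1} and \cite{wmjs2} (``the sequence $\{r_n\}$ has exactly the same properties as the displacement sequence\dots In particular we conclude''), offering no further argument. You instead reprove the needed displacement facts from scratch: the computation $\Phi^q=\mathrm{Id}+p$ for part (2); monotone convergence of the iterates of $\Phi^q-p$ along each of the $q$ residue classes for part (3), with uniformity in $k$ coming from both indices lying in the tail of one convergent sequence; and, for part (4), syndeticity of the return times $\{j:\Vert j\varrho\Vert<\delta\}$ of the irrational rotation, transported through the conjugacy lift $G$ via $G(\Phi^{n-1}(x_0))=G(x_0)+(n-1)\varrho$, plus density of the orbit and connectedness of $[0,1]$ for the interval statement, and $\Psi=\Omega\circ G$ for the equality of the two interval descriptions. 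All of these steps are sound. What your route buys is self-containedness, and it makes explicit the one analytic point the theorem silently relies on: that $g\circ\Psi$ is (uniformly) continuous because a homeomorphism with non-integer rotation number has no fixed points, so $\Psi([0,1])$ is a compact interval inside some $(m,m+1)$, away from the poles of $\mathrm{cosec}$ --- an observation the paper only records later, inside the proof of Proposition \ref{ciagloscpromienia}. What the paper's route buys is brevity and access to the finer statements of \cite{wmjs2} (the non-transitive irrational case, distributional stability) used in the rest of Section \ref{secLoca}.

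One shared caveat, which your fixed-point lemma actually exposes rather than creates: if $\varrho$ is an integer (the degenerate $q=1$ case in parts (2)--(3)), the displacement can take or approach integer values, $g$ blows up, and \eqref{semiokresowosc} can genuinely fail ($r_n\to\infty$ with uncontrolled differences along a non-periodic orbit attracted to a fixed point). Your argument correctly requires $\varrho\notin\mathbb{Z}$ at exactly this spot; the paper's citation-based proof glosses the same point, so this is the intended reading of the statement, not a defect of your proof.
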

We add that one can give a counterpart of this theorem for non-transitive homeomorphisms (see Proposition 2.1 in \cite{wmjs2}).  It is worth noting that the spiral-like components of curves $\Gamma=\Gamma((u_n))$ occur for those $n$ where $r_n$ is close to the minimum of $g(x)=\frac{1}{2}\vert\mathrm{cosec}(\pi x)\vert =1/2$ (see \cite{curl}). Moreover, the repetitive-like structure of the $(r_n)_{n=1}^{\infty}$ sequence induced by circle homeomorphisms, as captured by Theorem \ref{wlasnosci1}, explains visible recurrence of ``similar'' parts of the curlicue as illustrated e.g. in Figure \ref{fig:Arnold}.

\begin{example}\label{Example 4}
We have numerically investigated the family of Arnold circle maps:
\begin{displaymath}
\varphi(x)=x+\omega-\frac{K}{2\pi}\sin(2\pi x) \mod 1
\end{displaymath}
with the results presented in Figure \ref{fig:Arnold}. Taking $x_0=0$ and $K=1$, for $\omega=1/2$ we obtained the horizontal segment  of length 1 (not shown) since in this case $x_0=0$ is a 2-periodic  point of $\varphi$. However, for $\omega=1/3$ and $\omega=1/4$ the curlicue $\Gamma(0)$ accumulated along the straight line with some regular patterns visible after zooming in.

\begin{figure}[h!]
       
                \includegraphics[width=0.47\textwidth]{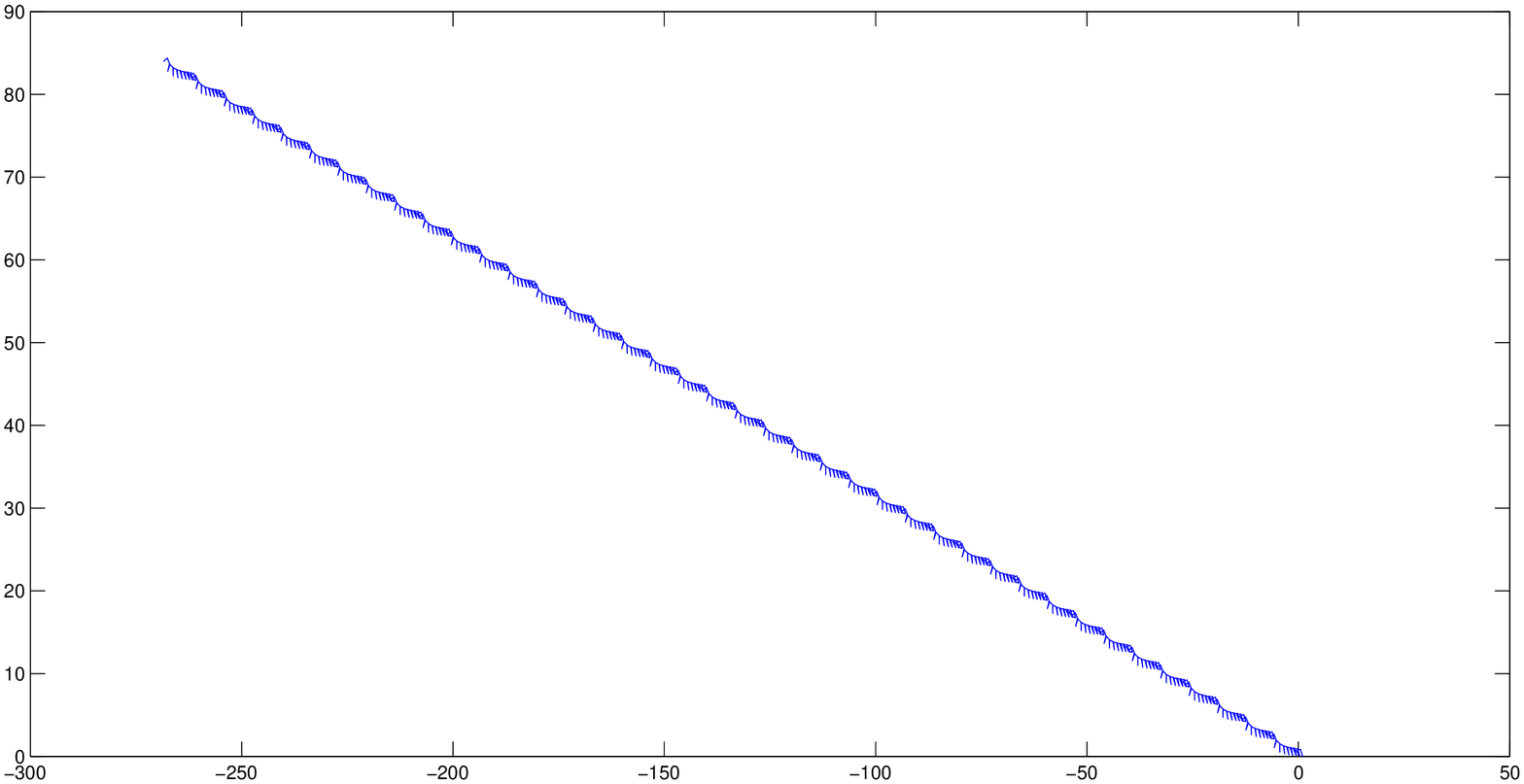}
                \includegraphics[width=0.47\textwidth]{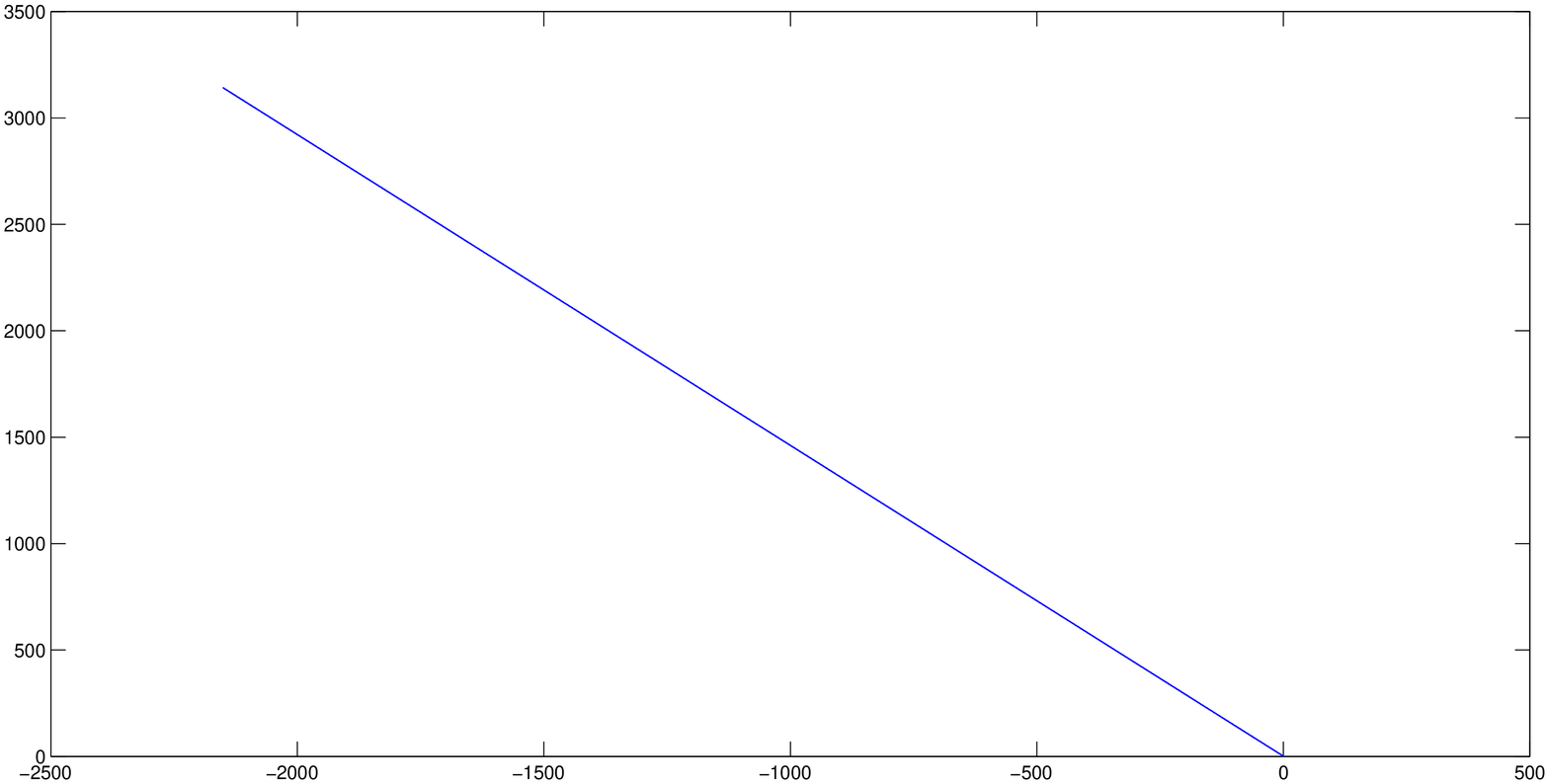}\\

                \includegraphics[width=0.47\textwidth]{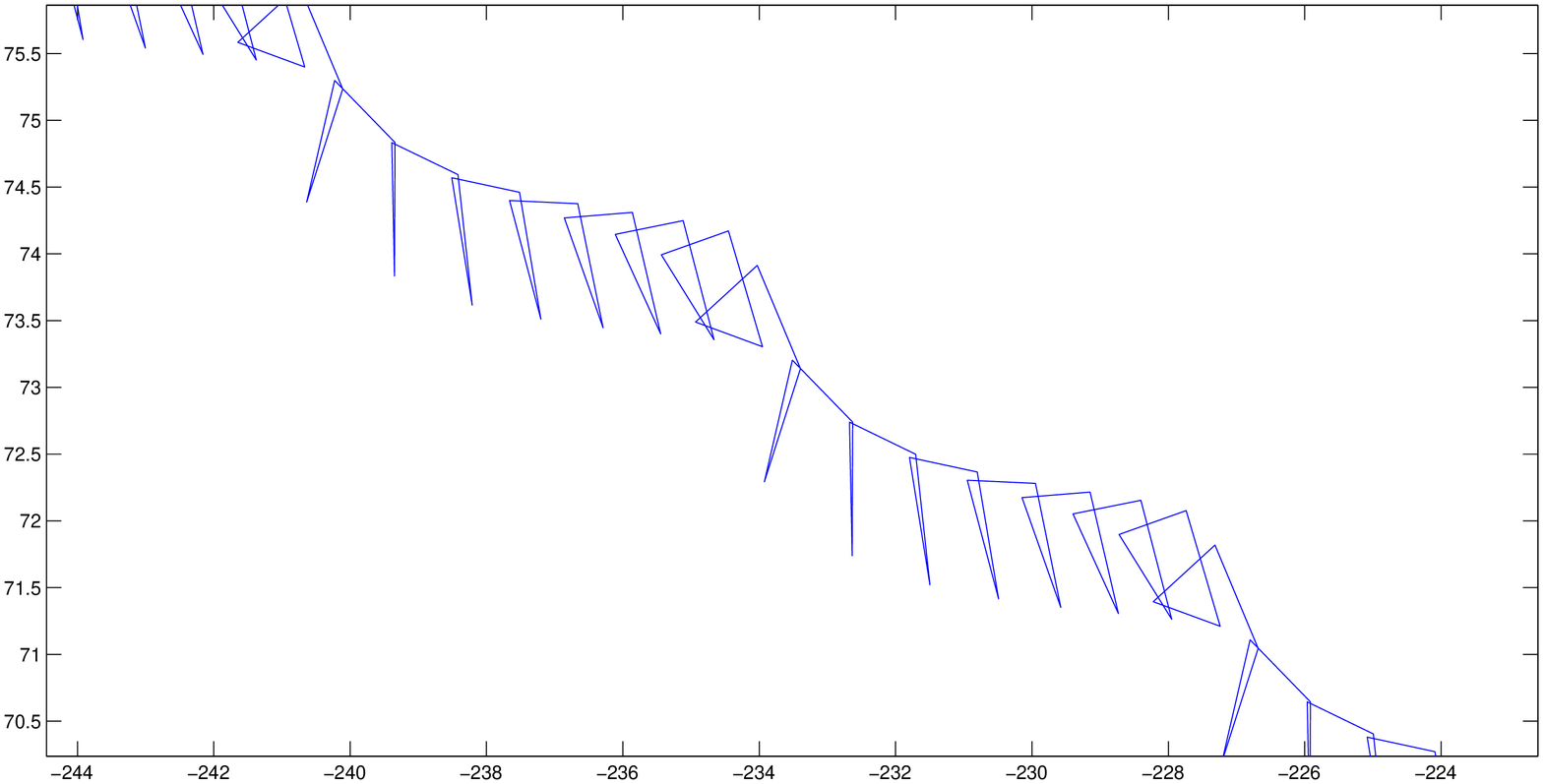}
                 \includegraphics[width=0.47\textwidth]{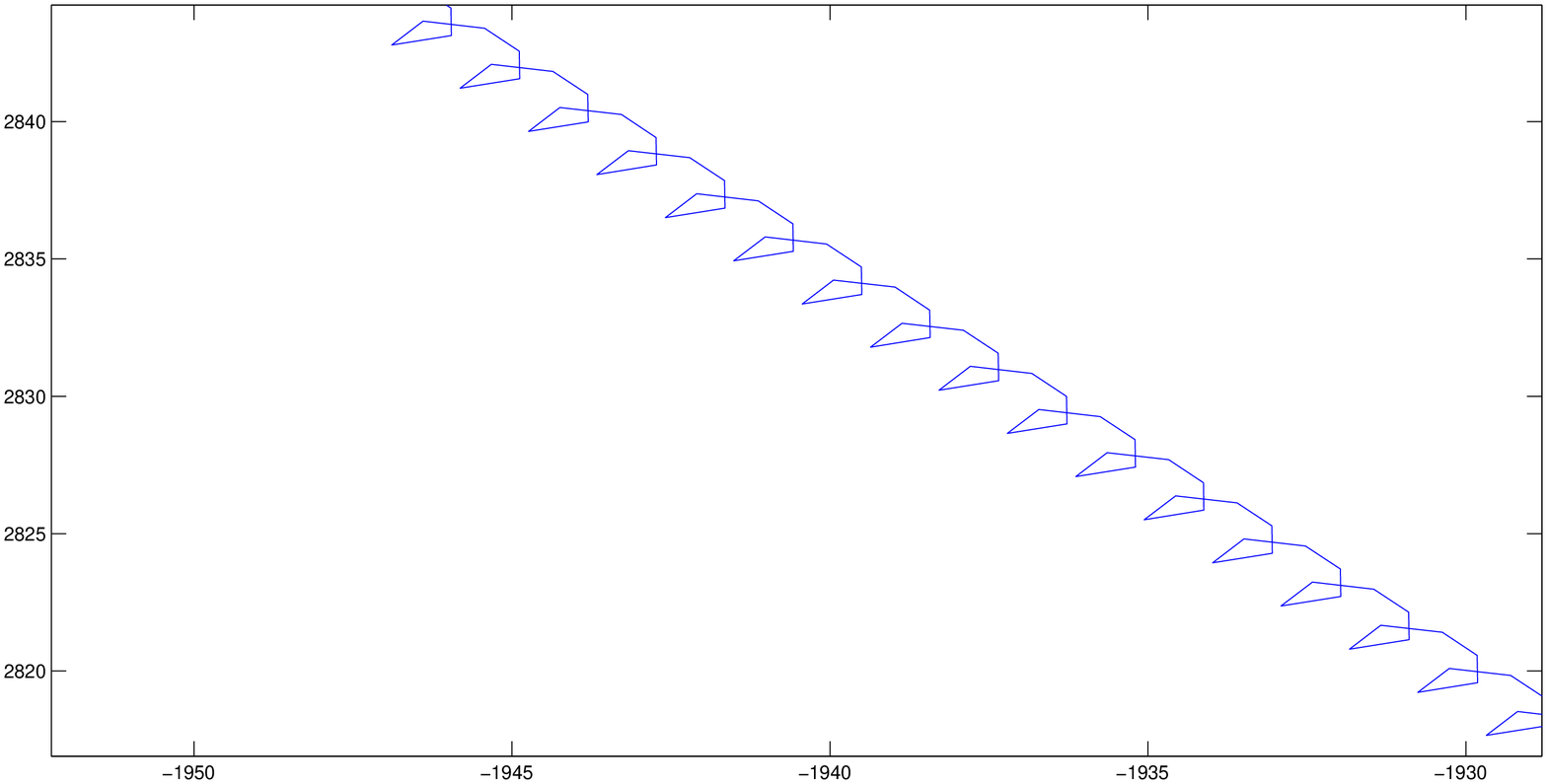}
                
        \caption{$\Gamma_{n}(x_0)$ generated by the Arnold map at $x_0=0$ with $K=1$ and different values of $\omega$ (left panel: $\omega=1/3$, $n=1000$, right panel: $\omega=1/4$, $n=10000$) with visible structure after zooming (bottom)}\label{fig:Arnold}
\end{figure}
\end{example}

We shall also ask about the distribution of the elements of the sequence $\{r_n(x_0)\}$ if $\varphi$ has irrational rotation number. 
\begin{definition}
Let $A\subset \mathbb{R}^+ = [0,+\infty)$ be a Borel subset. We define the distribution $\omega$ of the elements of $\{r_n(x_0)\}$ as
\begin{equation}\label{conv}
\omega(A):=\lim_{n\to\infty}\frac{\vert \{k\in \{1,2,...,n\}: r_k(x_0)\in A\}\vert}{n}
\end{equation}
\end{definition}
From the fact that $\varphi$ is uniquely ergodic we obtain:
\begin{proposition}
If $\varrho\in \mathbb{R}\setminus\mathbb{Q}$, then for every Borel set $A\subset \mathbb{R}$ we have
\begin{displaymath}
\omega(A)=\int\limits_{[0,1]}\mathcal{X}_A\circ F\;d\mu=\mu(\{F^{-1}(A)\}),
\end{displaymath}
where $F(x):=\frac{1}{2}\vert\mathrm{cosec}(\pi\Psi(x))\vert$ and $\mu$ is the unique invariant ergodic measure for the homeomorphism $\varphi = \Phi \mod 1$. In particular, $\omega$ does not depend on the choice of the generating point $x_0$ and the above convergence is uniform with respect to $x_0$.  The \emph{average local discrete radius of curvature} equals:
\begin{displaymath}
\hat{r}:=\lim_{n\to\infty}\frac{1}{n}\sum_{k=1}^{n}r_k=\frac{1}{2}\vert\mathrm{cosec}(\pi\varrho)\vert
\end{displaymath}
\end{proposition}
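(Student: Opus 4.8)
The plan is to recognise every quantity in the statement as a Birkhoff average of a single continuous observable and then invoke unique ergodicity. First I would rewrite the radius of curvature as a composition with the dynamics: since $\eta_n=2\pi\Psi(\Phi^{n-1}(x_0))$ we have $r_n(x_0)=F(\varphi^{n-1}(x_0))$, where $F(x)=\frac{1}{2}|\mathrm{cosec}(\pi\Psi(x))|$ and $\Psi=\Phi-\mathrm{Id}$ descends to $S^1$ because $\Psi(x+1)=\Psi(x)$. Before using this I would check that $F$ is genuinely continuous and bounded on $S^1$: as $\varrho\in\mathbb{R}\setminus\mathbb{Q}$ the map $\varphi$ has no fixed points, so $\Psi$ never takes integer values; being continuous on the connected compact set $S^1$, its image is a closed interval avoiding $\mathbb{Z}$, hence contained in a single open unit interval $(m,m+1)$, on which $\mathrm{cosec}(\pi\,\cdot\,)$ has no pole and is bounded. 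Consequently $\mathcal{X}_A\circ F$ is measurable, $F$ is a legitimate continuous test function, and both $\omega(A)$ and $\hat r$ are limits of ergodic averages $\frac{1}{n}\sum_{k=0}^{n-1}(\cdot)(\varphi^k(x_0))$.

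Next I would use that $\varphi$ is uniquely ergodic (this is exactly the hypothesis recalled before the statement): for every $\phi\in C(S^1)$ the averages $\frac{1}{n}\sum_{k=0}^{n-1}\phi(\varphi^k(x))$ converge to $\int_{S^1}\phi\,d\mu$ uniformly in $x$. Applying this directly to the continuous function $F$ gives $\hat r=\int_{S^1}F\,d\mu$, with convergence uniform in, and the limit independent of, $x_0$. This already yields the asserted $x_0$-independence of the average.

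For the distribution I would pass to the pushforward $\nu:=F_*\mu$ on $\mathbb{R}^+$ and show that the empirical measures $\frac{1}{n}\sum_{k=1}^{n}\delta_{r_k(x_0)}$ converge weakly-$*$ to $\nu$, uniformly in $x_0$; testing against $\psi\in C(\mathbb{R}^+)$ reduces this to the uniform ergodic theorem applied to $\psi\circ F\in C(S^1)$. To reach the indicator $\mathcal{X}_A$ I would sandwich it between continuous functions that differ only on an arbitrarily small $\nu$-neighbourhood of $\partial A$ and squeeze, obtaining $\omega(A)=\int_{S^1}\mathcal{X}_A\circ F\,d\mu=\mu(F^{-1}(A))$, uniformly in $x_0$. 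Here lies the first subtlety: because $\mathcal{X}_A$ is discontinuous, the identity is literally valid for $\nu$-continuity sets $A$ (those with $\nu(\partial A)=0$), and I would invoke non-atomicity of $\mu$ (a consequence of the absence of periodic points) to ensure that $F^{-1}(\partial A)$ is $\mu$-null for the sets of interest.

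The step I expect to be the genuine obstacle is the closed-form evaluation of $\hat r=\int_{S^1}\frac{1}{2}|\mathrm{cosec}(\pi\Psi)|\,d\mu$. My plan is to transport the integral to rotation coordinates through the conjugacy $\gamma$ (with lift $G$) that carries $\varphi$ to $\mathcal{R}_{\varrho}$ and $\mu$ to Lebesgue, turning it into $\int_0^1\frac{1}{2}|\mathrm{cosec}(\pi\Omega(y))|\,dy$ with $\Omega(y)=G^{-1}(y+\varrho)-G^{-1}(y)$, exactly the function appearing in Theorem \ref{wlasnosci1}. For the rigid rotation $\Psi\equiv\varrho$, so $\Omega\equiv\varrho$ and the integral is precisely $\frac{1}{2}|\mathrm{cosec}(\pi\varrho)|$. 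Recovering this value in the general conjugate case, where one only knows $\int_0^1\Omega(y)\,dy=\varrho$ from the quasi-periodicity $G^{-1}(y+1)=G^{-1}(y)+1$, is the computation I would scrutinise most carefully, since the strict convexity of $\mathrm{cosec}$ on $(0,1)$ tightly constrains how $\int_0^1 g(\Omega)$ can relate to $g(\varrho)$ with $g(t)=\frac{1}{2}|\mathrm{cosec}(\pi t)|$.
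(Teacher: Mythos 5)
Your route is the paper's route: the paper gives no argument beyond the phrase ``from the fact that $\varphi$ is uniquely ergodic'', and your reduction $r_n(x_0)=F(\varphi^{n-1}(x_0))$ together with the uniform ergodic theorem for continuous observables is exactly the intended proof, with your verification that $F$ is continuous on $S^1$ (no periodic points, so $\Psi$ misses $\mathbb{Z}$, so $\mathrm{cosec}(\pi\,\cdot)$ has no pole on the range of $\Psi$) a detail the paper omits. Your caveat about indicators is also genuine: uniform convergence holds only for continuous test functions, so your sandwich argument yields $\omega(A)=\mu(F^{-1}(A))$, uniformly in $x_0$, precisely for sets with $\mu(F^{-1}(\partial A))=0$, and the proposition's ``every Borel set'' is an overstatement (take $A$ a countable set containing all values $r_k(x_0)$: then the frequency is $1$ while $\mu(F^{-1}(A))$ can vanish). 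One correction to your fix, though: non-atomicity of $\mu$ does not repair this, because $F$ can be constant on sets of positive $\mu$-measure --- already for the piecewise-linear conjugacies of Example \ref{Example 1} the displacement $\Psi$ is locally constant on intervals, so level sets of $F$ may carry mass. The honest conclusion is that the identity should be asserted for $F_{*}\mu$-continuity sets.

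The step you flagged as the genuine obstacle is in fact fatal to the statement, not to your proof, and your convexity instinct settles it. Transporting by the conjugacy gives $\hat r=\int_{S^1}F\,d\mu=\int_0^1 g(\Omega(y))\,dy$ with $g(t)=\frac{1}{2}\mathrm{cosec}(\pi t)$, $\Omega(y)=G^{-1}(y+\varrho)-G^{-1}(y)\in(0,1)$ and $\int_0^1\Omega(y)\,dy=\varrho$. Since $g''(t)=\frac{\pi^2}{2}\,\mathrm{cosec}(\pi t)\left(\cot^2(\pi t)+\mathrm{cosec}^2(\pi t)\right)>0$ on $(0,1)$, $g$ is strictly convex and Jensen gives $\hat r\geq g(\varrho)$, with equality if and only if $\Omega$ is a.e.\ (hence, by continuity, everywhere) equal to $\varrho$, i.e.\ $G^{-1}$ commutes with $\mathcal{R}_{\varrho}$; for irrational $\varrho$ the centralizer of $\mathcal{R}_{\varrho}$ consists of rotations, forcing $\varphi=\mathcal{R}_{\varrho}$. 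So the closed form $\hat r=\frac{1}{2}\vert\mathrm{cosec}(\pi\varrho)\vert$ holds only for the rigid rotation, and for any other minimal homeomorphism --- e.g.\ the conjugated rotations of Example \ref{Example 3} --- one has the strict inequality $\hat r>\frac{1}{2}\vert\mathrm{cosec}(\pi\varrho)\vert$. The statement your method actually proves, and the correct general one, is $\hat r=\int_{S^1}F\,d\mu=\int_0^1 g(\Omega(y))\,dy\geq\frac{1}{2}\vert\mathrm{cosec}(\pi\varrho)\vert$; no argument can recover the claimed equality in general, so the ``computation you would scrutinise most carefully'' is precisely where the proposition itself errs.
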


Let us remark that from Proposition 2.8 and Theorem 2.17 in \cite{wmjs2} one readily obtains a kind of stability (in terms of weak convergence of measures) for (sample-)distributions of the elements of the sequence $r_n$ as $\varphi$ is approximated by some other homeomorphism $\tilde{\varphi}$, close to $\varphi$ in $C^0(S^1)$, allowing also for rational rotation number $\tilde{\varrho}$ of $\tilde{\varphi}$. This can serve as the justification for numerically estimated $\{r_n(x_0)\}$ distributions.

However, one can also ask how the radius $r^{\alpha}_n$ itself (not the distribution) depends on the parameter $\alpha$ when $\{\varphi_\alpha\}$ is a continuously parameterized family of circle homeomorphisms $\varphi_\alpha$:
\begin{proposition}\label{ciagloscpromienia}
Let $\varphi:S^1\to S^1$ be a minimal homeomorphism with an irrational rotation number $\varrho$. Fix $\varepsilon>0$. Then there exists a neighbourhood $U\subset C^{0}(S^1)$ of $\varphi$ such that for every other minimal homeomorphism $\tilde{\varphi}\in U$ with the same rotation number $\varrho(\tilde{\varphi})=\varrho$ we have
\begin{displaymath}
\sup_{x_0\in\mathbb{R}}\sup_{n\in\mathbb{N}}\vert r_n(x_0)-\tilde{r}_n(x_0)\vert <\varepsilon,
\end{displaymath}
where $r_n(x_0)$ and $\tilde{r}_n(x_0)$ denote local radii of curvature evaluated at $x_0$, respectively, for $\varphi$ and $\tilde{\varphi}$.
\end{proposition}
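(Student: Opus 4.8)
The plan is to reduce everything to a uniform comparison of the two conjugacies to the common rotation $\mathcal{R}_{\varrho}$, and then to exploit the fact that the radius sequence is obtained by sampling one fixed continuous function along rotation orbits whose phases differ only by a constant independent of $n$. Recall from the discussion preceding the statement that $r_n(x_0)=g(\Psi(\Phi^{n-1}(x_0)))$ with $g(x)=\frac12|\mathrm{cosec}(\pi x)|$ and $\Psi=\Phi-\mathrm{Id}$. Since $\varphi$ and $\tilde\varphi$ are minimal with the \emph{same} irrational rotation number $\varrho$, each is conjugate to $\mathcal{R}_{\varrho}$: writing $G$ (resp.\ $\tilde G$) for the lift of the conjugating homeomorphism normalized by $G(0)=0$, one has $\Phi^m(x_0)=G^{-1}(G(x_0)+m\varrho)$, whence
\[
r_n(x_0)=g\big(\Omega(G(x_0)+(n-1)\varrho)\big),\qquad \Omega=\Psi\circ G^{-1},
\]
with the analogous formula for $\tilde r_n(x_0)$ using $\tilde G,\tilde\Omega$. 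Here $\Omega(w)=G^{-1}(w+\varrho)-G^{-1}(w)$ is exactly the function introduced in Theorem \ref{wlasnosci1}.

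The decisive point, which I expect to be the main obstacle, is the continuity of the conjugacy: I would show that $\|G-\tilde G\|_{C^0}\to 0$ and $\|G^{-1}-\tilde G^{-1}\|_{C^0}\to 0$ as $\tilde\varphi\to\varphi$ in $C^0(S^1)$ through minimal maps of rotation number $\varrho$. This is not a formal consequence of $C^0$-closeness of the maps; I would obtain it from unique ergodicity. A circle homeomorphism with irrational rotation number is uniquely ergodic, and for a minimal one the conjugacy to $\mathcal{R}_{\varrho}$ is precisely the lifted, normalized distribution function $G(x)=\hat\mu([0,x])$ of its unique invariant measure $\mu$, which minimality forces to be non-atomic and of full support, so that $G$ is a genuine homeomorphism. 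A standard argument shows $\tilde\mu\to\mu$ weak-$*$: any weak-$*$ limit $\nu$ of $\tilde\mu$ is $\varphi$-invariant, since for continuous $f$ one has $\int f\circ\varphi\,d\nu=\lim\int f\circ\tilde\varphi\,d\tilde\mu=\lim\int f\,d\tilde\mu=\int f\,d\nu$, using the uniform convergence $f\circ\tilde\varphi\to f\circ\varphi$ and invariance of $\tilde\mu$; hence $\nu=\mu$ by unique ergodicity of $\varphi$. A P\'olya-type upgrade then turns the resulting pointwise convergence of the (continuous) distribution functions into uniform convergence $\tilde G\to G$, and monotonicity yields $\tilde G^{-1}\to G^{-1}$ uniformly as well.

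Granting the conjugacy estimate, I would finish with a single triangle inequality uniform in both $n$ and $x_0$. Writing $w_n=G(x_0)+(n-1)\varrho$ and $\tilde w_n=\tilde G(x_0)+(n-1)\varrho$, the crucial observation, and the reason the equal-rotation-number hypothesis is indispensable, is that $w_n-\tilde w_n=G(x_0)-\tilde G(x_0)$ does not depend on $n$, so $|w_n-\tilde w_n|\le\|G-\tilde G\|_{C^0}$ uniformly; a differing rotation number would instead introduce a phase drift $(n-1)(\varrho-\tilde\varrho)$ growing with $n$ and destroy uniformity. Then
\[
|r_n(x_0)-\tilde r_n(x_0)|\le \big|g(\Omega(w_n))-g(\Omega(\tilde w_n))\big|+\big|g(\Omega(\tilde w_n))-g(\tilde\Omega(\tilde w_n))\big|.
\]
The first term is controlled by the modulus of continuity of the single fixed continuous function $g\circ\Omega$ evaluated at $\|G-\tilde G\|_{C^0}$; the second is at most $\sup_y|g(\Omega(y))-g(\tilde\Omega(y))|$.

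Both terms are harmless once I check that $g$ stays away from its poles. Since $\varrho\notin\mathbb{Q}$, $\varphi$ has no fixed points, so the continuous periodic function $\Psi\bmod 1$ (hence $\Omega\bmod 1=\Psi\circ G^{-1}\bmod 1$) takes values in a compact subinterval $[\delta,1-\delta]\subset(0,1)$; for $\tilde\varphi$ in a small enough neighbourhood $\tilde\Psi$ is $C^0$-close to $\Psi$, so $\tilde\Omega\bmod 1$ stays in $[\delta/2,1-\delta/2]$, where $g$ is Lipschitz with some constant $L$. Consequently the second term is at most $L\|\Omega-\tilde\Omega\|_{C^0}\le 2L\,\|G^{-1}-\tilde G^{-1}\|_{C^0}$. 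Shrinking the neighbourhood $U$ of $\varphi$ so that $\|G-\tilde G\|_{C^0}$ and $\|G^{-1}-\tilde G^{-1}\|_{C^0}$ are small enough to force each term below $\varepsilon/2$ yields the claim.
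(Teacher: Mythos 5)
Your proposal is correct, and its skeleton matches the paper's proof: both reduce everything to the identity $\Phi^n(x_0)=G^{-1}(G(x_0)+n\varrho)$, exploit the fact that with equal rotation numbers the phase discrepancy $G(x_0)-\tilde{G}(x_0)$ is independent of $n$ (which is exactly what makes the bound uniform in $n$ and $x_0$), and keep the cosecant away from its poles by confining the displacement range to a compact subinterval $[\delta,1-\delta]\subset(0,1)$, where it is Lipschitz. The genuine difference lies in how the key input is obtained. The paper simply cites Theorem 2.3 of \cite{wmjs2} for the $C^0$-continuity of the assignment $\varphi\mapsto\gamma$ (the conjugacy with the rotation, up to normalization), and then runs a four-term triangle inequality directly on differences of $G^{-1}$-values, using uniform continuity of $G^{-1}$ and the assumed closeness $d_{C^0}(\gamma,\tilde{\gamma})<\tau$, $d_{C^0}(\gamma^{-1},\tilde{\gamma}^{-1})<\tau$. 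You instead prove that continuity from scratch: $G$ is the lifted distribution function of the unique invariant measure, any weak-$*$ limit of $\tilde{\mu}$ is $\varphi$-invariant and hence equals $\mu$ by unique ergodicity, and a P\'olya-type argument (monotone functions converging pointwise to a continuous monotone limit converge uniformly) upgrades this to $\tilde{G}\to G$ and $\tilde{G}^{-1}\to G^{-1}$ in $C^0$; your final estimate is then packaged as a two-term decomposition through the single fixed function $g\circ\Omega$ (its modulus of continuity at $\Vert G-\tilde{G}\Vert_{C^0}$, plus a Lipschitz comparison of $\Omega$ and $\tilde{\Omega}$ bounded by $2L\Vert G^{-1}-\tilde{G}^{-1}\Vert_{C^0}$), which is equivalent in substance to the paper's four-term bound. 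Your route buys self-containedness and explains \emph{why} the conjugacy varies continuously (continuity of the invariant measure at uniquely ergodic maps); the paper's citation buys brevity. Two small points you should make explicit in a write-up: the uniform continuity of $g\circ\Omega$ on all of $\mathbb{R}$ follows from its $1$-periodicity, and the common normalization $G(0)=\tilde{G}(0)=0$ makes $G-\tilde{G}$ periodic, so $\sup_{x_0\in\mathbb{R}}\vert G(x_0)-\tilde{G}(x_0)\vert$ is finite and attained on $[0,1]$ --- both are needed for the suprema over $x_0\in\mathbb{R}$.
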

\begin{proof} Firstly note (see Theorem  2.3 in \cite{wmjs2}) that the mapping $\varphi\mapsto \gamma$ assigning to a homeomorphism  with irrational rotation number a map $\gamma: S^1 \to S^1$ semi-conjugating it (or conjugating, if  is minimal)  with the corresponding rotation is a continuous mapping from $C^0(S^1)$ into $C^0(S^1)$-topology (up to some normalization, since every two (semi-)conjugacies of $\varphi$ differ by an additive constant in the lift).

We recall that
\begin{eqnarray}
  r_n(x_0) &=& \frac{1}{2}\vert \mathrm{cosec}(\pi(\Phi^n(x_0)-\Phi^{n-1}(x_0)))\vert \nonumber\\
  \tilde{r}_n(x_0) &=& \frac{1}{2}\vert \mathrm{cosec}(\pi(\tilde{\Phi}^n(x_0)-\tilde{\Phi}^{n-1}(x_0)))\vert, \nonumber
\end{eqnarray}
where $\Phi$ and $\tilde{\Phi}$ are corresponding lifts. Let $\Psi=\Phi-\textrm{Id}$ and $\tilde{\Psi}=\tilde{\Phi}-\textrm{Id}$ denote corresponding displacement functions. Then $\Psi,\tilde{\Psi}: \mathbb{R}\to\mathbb{R}$ are continuous and periodic with period $1$. Moreover, $\Psi(\mathbb{R})=\Psi([0,1])\subset (k,k+1)$ for some $k\in\mathbb{Z}$, as there are no periodic points of $\varphi$ (similarly for $\tilde{\Psi}$). However, the lift $\Phi$ of $\varphi$ can be chosen so that $\Psi(\mathbb{R})\subset (0,1)$ (the shape of the curlicue and the radius of curvature do not depend on the choice of the lift). But as $\Psi$ attains its lower and upper bounds there exists $\delta$ such that $\Psi(\mathbb{R})\subset [\delta,1-\delta]\subset (0,1)$. By considering sufficiently small neighbourhood $U\subset C^0(S^1)$ of $\varphi$ we can assume that $\tilde{\Psi}(\mathbb{R})\subset [\delta,1-\delta]$ for $\tilde{\Psi}$ being the displacement of an arbitrary $\tilde{\varphi}\in U$. Now consider the function $h(x)=\mathrm{cosec}(\pi x)$ on the interval $[\delta,1-\delta]$.  There exists $M$ such that $\vert h^{\prime}(x)\vert<M$ for every $x\in [\delta,1-\delta]$. Fix $\gamma$, which conjugates $\varphi$ with the rotation by $2\pi\varrho$ and let $G$ be its lift. Let us also fix $\omega$ and $\tau<\omega$ which can be  arbitrary small  numbers such that $\vert G^{-1}(x)-G^{-1}(y)\vert<\omega$ whenever $\vert x-y\vert<\tau$. After possibly further decreasing the neighbourhood $U$, we can assume that for every minimal $\tilde{\varphi}\in U$  
there exists $\tilde{\gamma}$ (semi-)conjugating $\tilde{\varphi}$ with its corresponding rotation such that $d_{C^0}(\gamma,\tilde{\gamma})<\tau$ and $d_{C^0}(\gamma^{-1},\tilde{\gamma}^{-1})<\tau$. Thus let us choose  $\tilde{\varphi}\in U$, which is a minimal homeomorphism with the same rotation number $\varrho$. Let $x_0\in\mathbb{R}$ be arbitrary. 
Then $d_{C^0}(\gamma,\tilde{\gamma}),d_{C^0}(\gamma^{-1},\tilde{\gamma}^{-1})<\tau$ for some $\tilde{\gamma}$ conjugating $\tilde{\varphi}$ with the rotation. We can assume that $\vert G(x_0) -\tilde{G}(x_0)\vert<\tau$ where $\tilde{G}$ is a lift of $\tilde{\gamma}$. Notice that $\Phi^n(x_0)=G^{-1}(G(x_0)+n\varrho)$ and $\tilde{\Phi}^n(x_0)=\tilde{G}^{-1}(\tilde{G}(x_0)+n\varrho)$ for $n=0,1,2...$. Thus the corresponding points on the orbits $\{\varphi^n(x)\}$ and $\{\tilde{\varphi}^n(x)\}$ remain $2\omega$-close (independently of $x_0$ and $n$).  Consequently we estimate:

\begin{multline}
\begin{aligned}
 \vert r_n(x_0)-\tilde{r}_n(x_0)\vert < & \frac{1}{2} M (\vert G^{-1}(G(x_0)+n\varrho)-G^{-1}(\tilde{G}(x_0)+n\varrho)\vert  +\\
   &\vert G^{-1}(\tilde{G}(x_0)+n\varrho)-\tilde{G}^{-1}(\tilde{G}(x_0)+n\varrho)\vert + \\
 & \vert G^{-1}(G(x_0)+(n-1)\varrho)-G^{-1}(\tilde{G}(x_0)+(n-1)\varrho)\vert +  \\
  &\vert G^{-1}(\tilde{G}(x_0)+(n-1)\varrho)-\tilde{G}^{-1}(\tilde{G}(x_0)+(n-1)\varrho)\vert)\\
  < &M (\tau+\omega)<2 M \omega, 
\end{aligned}\nonumber
\end{multline}


which ends the proof. \end{proof}

If we do not require that the rotation numbers of $\varphi$ and $\tilde{\varphi}$ are the same then the above follows for fixed $n$ (which is a simple observation). Namely,
\begin{remark}
Let $\varphi:S^1\to S^1$ be a minimal homeomorphism. Fix $\varepsilon>0$ and $n\in\mathbb{N}$. There exists a neighbourhood $U\subset C^{0}(S^1)$ of $\varphi$ such that for every other minimal homeomorphism $\tilde{\varphi}\in U$ we have
\begin{displaymath}
\sup_{x_0\in\mathbb{R}}\vert r_n(x_0)-\tilde{r}_n(x_0)\vert <\varepsilon,
\end{displaymath}
where $r_n(x_0)$ and $\tilde{r}_n(x_0)$ denote local radii of curvature of curves generated by $\varphi$ and $\tilde{\varphi}$, respectively. 
\end{remark}

\section{Discussion}\label{secDisc}
We have established a number of properties of curlicues generated by orientation preserving circle homeomorphisms. The first natural conclusion that we have drawn is that the geometrical properties of curlicues depend on the rationality of the rotation number of the generating circle homeomorphisms. Nevertheless, even for rational rotation number basic properties such as being bounded or not, might rather depend on the homeomorphism $h$ conjugating $\varphi$ with the corresponding rotation (if $\varphi$ is conjugated to the rotation), as follows from Examples \ref{Example 1} and \ref{Example 2}. On the other hand, for the irrational rotation number the relationship between the shape of the generated curve and the continuous solution of the corresponding cohomological equation seems to be an interesting observation. However, there are rather not explicit and easy to verify criteria assuring that such a solution exists (see e.g. \cite{ghys} for the special case, when the homeomorphism is the irrational rotation $\mathcal{R}_{\varrho}$ and the cohomological equation to be solved is $g(x)+u(x)=u(\mathcal{R}_{\varrho}(x))$, where $g:S^1\to \mathbb{R}$ is a given continuous function and a continuous function $u:S^1\to \mathbb{R}$ is the unknown of the problem).  Similarly, one cannot apriori determine whether the curve is superficial or not. Indeed, for bounded case or unbounded with non-zero Birkhoff average the situation is clear but in the remaining case it depends on more refined properties of the rotation number (Theorem \ref{TwierdzenieZbiorcze}). We know that the necessary condition for the curlicue to be bounded (and thus for the existence of a continuous solution of the cohomological equation) is the vanishing of the Birkhoff average. On the other hand, if the Birkhoff average does not vanish, then the curlicue is unbounded. 

Therefore, it would be interesting to characterize the case when the Birkhoff average equals zero but the induced curve is unbounded. Partially we answered this question in Theorem  \ref{TwierdzenieZbiorcze}, which allowed to establish superficiality and estimate the grow rate of such an unbounded curlicue. However, even providing a specific example of a minimal homeomorphism with vanishing Birkhoff average and unbounded curlicue (thus unbounded Birkhoff sums) seems a non-trivial task and further characterization of such curves may be a subject of further research. Similarly, this work might be a starting point for studying dynamically generated curlicues (and associated `walks', as mentioned in the Introduction), with, perhaps, some connections to the theory exponential sums and various Birkhoff averages. Generalization of these results for continuous circle mappings (instead of homeomorphisms) does not seem straightforward too.

\subsection*{Acknowledgements}
I would like to thank Ali Tahzibi from University of S\~{a}o Paolo at S\~{a}o Carlos for introducing me to the subject of curlicues and to Mario Ponce from Pontificia Universidad Cat\'{o}lica de Chile for fruitful discussions on circle homeomorphisms, curlicues and the relation between the cohomological equation and shape of curlicues during my visit at ICMC at S\~{a}o Carlos in January 2014.


\normalsize

\end{document}